\makeatletter \@addtoreset{equation}{section} \makeatother
\newtheorem{theorem}{Theorem}[section]
\newtheorem{proposition}{Proposition}[section]
\newtheorem{lemma}{Lemma}[section]
\newtheorem{remark}{Remark}[section]
\newtheorem{corollary}[theorem]{Corollary}
\begin{document}
\title{New type of solutions for a critical Grushin-type problem with competing potentials}

\author{Wenjing Chen\footnote{Corresponding author.}\ \footnote{E-mail address:\, {\tt wjchen@swu.edu.cn} (W. Chen), {\tt zxwangmath@163.com} (Z. Wang).}\  \ and Zexi Wang\\
\footnotesize  School of Mathematics and Statistics, Southwest University,
Chongqing, 400715, P.R. China}

\date{ }
\maketitle

\begin{abstract}
{ In this paper, we consider a critical Grushin-type problem with double potentials.  By applying the reduction argument and local Poho\u{z}aev identities, we construct a new family of solutions to this problem, which are concentrated at points lying on the top and the bottom circles of a cylinder. }

\vspace{.2cm}
\emph{\bf Keywords:} Critical Grushin problem; Competing potentials; Reduction argument; Local Poho\u{z}aev identities.

\vspace{.2cm}
\emph{\bf 2020 Mathematics Subject Classification:} 35J15; 35B09; 35B33.

\end{abstract}

\section{Introduction}
In this paper, we consider the following semilinear elliptic equation with the Grushin operator and critical exponent
\begin{align}\label{yuan}
  G_\alpha u+(\alpha+1)^2|y|^{2\alpha}\mathcal{V}(x)u=(\alpha+1)^2\mathcal{Q}(x)u^{\frac{\Upsilon_\alpha+2}{\Upsilon_\alpha-2}},\quad u>0,\quad x=(y,z)\in \mathbb{R}^{n_1}\times\mathbb{R}^{n_2},
  \end{align}
where $\alpha\geq0$, $\{n_1,n_2\}\subset \mathbb{N}^+$, $\mathcal{V}(x)$ and $\mathcal{Q}(x)$ are two potential functions defined in $\mathbb{R}^{n_1+n_2}$,
\begin{equation*}
G_\alpha:=-\Delta_y-(\alpha+1)^2|y|^{2\alpha}\Delta _z
\end{equation*}
is called the Grushin operator,
$\Upsilon_\alpha:=n_1+(\alpha+1)n_2$ is the appropriate homogeneous dimension, and the power $\frac{\Upsilon_\alpha+2}{\Upsilon_\alpha-2}$ is the corresponding critical exponent.
For general case $\alpha>0$, Monti and Morbidelli \cite{MM} studied the existence of positive solutions for \eqref{yuan} with $\mathcal{V}(x)=0$ and $\mathcal{Q}(x)=1$.

When $\alpha=0$, \eqref{yuan} reduces to
\begin{equation}\label{peng}
  -\Delta u+\mathcal{V}(x)u=\mathcal{Q}(x)u^{\frac{N+2}{N-2}}, \quad u>0,\quad \text{ in $\mathbb{R}^N$}.
\end{equation}
In recent years, there are many works dedicated to study \eqref{peng}, see \cite{WY1,DMW,LWX,PWW,DLY,GMPY,WY3} for $\mathcal{V}(x)=0$, \cite{BC,CWY,PWY,DHWW,HWW2,VW,GG} for $\mathcal{Q}(x)=1$, \cite{HWW1} for $\mathcal{V}(x)\neq0$ and $\mathcal{Q}(x)\neq1$. In particular, Wei and Yan \cite{WY1} first used the number of the bubbles of solutions as the parameter to construct infinitely many solutions on a circle for \eqref{peng}, where $\mathcal{V}(x)=0$ and $\mathcal{Q}(x)$ is radially symmetric. On this basis, Duan, Musso and Wei \cite{DMW} constructed a new type of solutions for \eqref{peng}, which concentrate at points lying on the top and the
bottom circles of a cylinder. More precisely, these solutions are different from \cite{WY1} and have the form
\begin{equation*}
  \sum\limits_{j=1}^kW_{\bar{x}_j,\lambda}+\sum\limits_{j=1}^kW_{\underline{x}_j,\lambda}+\varphi_k,
\end{equation*}
where $W_{x,\lambda}(y)=\big(\frac{\lambda}{1+\lambda^2|y-x|^2}\big)^{\frac{N-2}{2}}$, $\varphi_k$ is a remainder term,
\begin{align*}
   \left\{
  \begin{array}{ll}
  \bar{x}_j=\big(\bar{r}\sqrt{1-\bar{h}^2}\cos \frac{2(j-1)\pi}{k},\bar{r}\sqrt{1-\bar{h}^2}\sin \frac{2(j-1)\pi}{k},\bar{r}\bar{h},0\big),
  \quad &j=1,2,\cdots,k,\\
  \underline{x}_j=\big(\bar{r}\sqrt{1-\bar{h}^2}\cos \frac{2(j-1)\pi}{k},\bar{r}\sqrt{1-\bar{h}^2}\sin \frac{2(j-1)\pi}{k},-\bar{r}\bar{h},0\big),
  \quad &j=1,2,\cdots,k,
    \end{array}
    \right.
 \end{align*}
with $\bar{h}$ goes to zero, and $\bar{r}$ is close to some $r_0>0$.

For equation \eqref{yuan},
 we are concerned with the case of $\alpha=1$, since we require
the non-degeneracy of the related limit problem (see \cite{CFMS}).  Then \eqref{yuan} becomes into
\begin{align}\label{proyuan}
 (-\Delta_y-4|y|^2\Delta _z) u+4|y|^2\mathcal{V}(x)u=4\mathcal{Q}(x)u^{\frac{\Upsilon_1+2}{\Upsilon_1-2}},\quad u>0,\quad x=(y,z)\in \mathbb{R}^{n_1}\times\mathbb{R}^{n_2}.
  \end{align}
This case appeared very
early in connection with the Cauchy-Riemann Yamabe problem discussed by Jerison and Lee \cite{JL}. Since the CR Yamabe equation in some cases can be transformed into the Grushin equation \eqref{yuan} with $\mathcal{V}(x)=0$, some Webster scalar curvature problems get resolved, we refer the readers to
\cite{CPY,CFMS,JL1} and references therein.

However, as far as we know, there are only a few papers concerning the existence of infinitely many solutions for \eqref{proyuan} besides \cite{LTW,LN,LW,WWY,GWY}. In particular,
Wang, Wang and Yang \cite{WWY} first obtained infinitely many solutions for \eqref{proyuan} when $\mathcal{V}(x)=0$ and $\mathcal{Q}(x)$ is radially
symmetric. Moreover, Liu and Niu \cite{LN} considered \eqref{proyuan} with double potentials, where they assumed that $N\geq5$ and

\begin{description}
\item [$(A_1)$] $\mathcal{V}(x)=\mathbf{V}(|\tilde{z}'|,\tilde{z}'')$ and $\mathcal{Q}(x)=\mathbf{Q}(|\tilde{z}'|,\tilde{z}'')$ are bounded nonnegative functions, where $x=(y,z)=(y,\tilde{z}',\tilde{z}'')\in \mathbb{R}^m\times\mathbb{R}^2\times \mathbb{R}^{N-m-2}$, $\frac{N+1}{2}\leq m<N-1$;
\end{description}

\begin{description}
\item [$(A_2)$] $\mathbf{Q}(\tilde{r},\tilde{z}'')$ has a stable critical point
$(\tilde{r}_0,\tilde{z}_0'')$ in the sense that
$\mathbf{Q}(\tilde{r},\tilde{z}'')$ has a critical point $(\tilde{r}_0,\tilde{z}_0'')$
 satisfying $\tilde{r}_0>0$, $\mathbf{Q}(\tilde{r}_0,\tilde{z}_0'')=1$, and
\begin{equation*}
deg\big(\nabla \mathbf{Q}(\tilde{r},\tilde{z}''),(\tilde{r}_0,\tilde{z}_0'')\big)\neq0;
\end{equation*}
\end{description}


\begin{description}
\item [$(A_3)$] $\mathbf{V}(\tilde{r},\tilde{z}'')\in C^1(B_\rho(\tilde{r}_0,\tilde{z}_0''))$, $\mathbf{Q}(\tilde{r},\tilde{z}'')\in C^{3}(B_\rho(\tilde{r}_0,\tilde{z}_0''))$, and
 \begin{equation*}
   \mathbf{V}(\tilde{r}_0,\tilde{z}_0'')\int_{\mathbb{R}^N}U_{0,1}^2dx-\frac{\Delta \mathbf{Q}(\tilde{r}_0,\tilde{z}_0'')}{2^{\star}(N-m)}\int_{\mathbb{R}^N}\frac{z^2}{|y|}U_{0,1}^{2^{\star}}(x)dx>0,
 \end{equation*}
 where $\rho>0$ is a small constant, $2^{\star}=\frac{2(N-1)}{N-2}$, and $U_{0,1}(x)$ is the unique positive solution of $- \Delta u(x)=\frac{u^{2^*-1}(x)}{|y|}$ in $\mathbb{R}^N$.
\end{description}
By combining the finite dimensional reduction argument and local Poho\u{z}aev identities, they
 obtained infinitely many solutions concentrated
on a circle.


Motivated by the idea of \cite{DMW} and \cite{LN}, in this paper, we want to construct a new type of solutions for problem \eqref{proyuan}, which are concentrated at points lying on the top and the bottom circles of a cylinder. 
First of all, we transform
\eqref{proyuan} into a new equation by a special change of variable.

If $\mathcal{V}(x)=\mathcal{V}(|y|,z)$, $\mathcal{Q}(x)=\mathcal{Q}(|y|,z)$ and $u(x)=\varphi(|y|,z)$ is a solution of \eqref{proyuan}, then for $\gamma=|y|$, we have
\begin{equation*}
  -\varphi_{\gamma\gamma}(\gamma,z)-\frac{n_1-1}{\gamma}\varphi_{\gamma}(\gamma,z)-4\gamma^2\Delta_z \varphi(\gamma,z)+4\gamma^2\mathcal{V}(\gamma,z)\varphi(\gamma,z)=4\mathcal{Q}(\gamma,z)\varphi^{\frac{\Upsilon_1+2}{\Upsilon_1-2}}(\gamma,z).
\end{equation*}
Define $v(\gamma,z)=\varphi(\sqrt{\gamma},z)$, then
\begin{equation*}
  \varphi_\gamma(\sqrt{\gamma},z)=2\sqrt{\gamma}v_\gamma(\gamma,z),\quad \varphi_{\gamma\gamma}(\sqrt{\gamma},z)=4\gamma v_{\gamma\gamma}(\gamma,z)+2v_\gamma(\gamma,z).
\end{equation*}
Hence, $v$ satisfies
\begin{equation*}
  -v_{\gamma\gamma}(\gamma,z)-\frac{n_1}{2\gamma}v_\gamma(\gamma,z)-\Delta_z v(\gamma,z)+\mathcal{V}(\sqrt{\gamma},z)v(\gamma,z)=\frac{1}{\gamma}\mathcal{Q}(\sqrt{\gamma},z)v^{\frac{\Upsilon_1+2}{\Upsilon_1-2}}(\gamma,z).
\end{equation*}
Denote $V(x)=\mathcal{V}(\sqrt{\gamma},z)$, $Q(x)=\mathcal{Q}(\sqrt{\gamma},z)$, $m=\frac{n_1+2}{2}$ for even $n_1$, $N=m+n_2$, then $u=v(|y|,z)$ solves
\begin{align}\label{pro}
 -\Delta u(x)+V(x)u(x)=Q(x)\frac{u^{2^{\star}-1}(x)}{|y|},\quad u>0,\quad x=(y,z)\in \mathbb{R}^{m}\times\mathbb{R}^{N-m}.
  \end{align}


Now we state our assumptions on $V(x)$ and $Q(x)$ appearing in \eqref{pro}.

\begin{description}
\item [$(C_1)$] ${V}(x)=V(|{z}'|,{z}'')$ and ${Q}(x)=Q(|{z}'|,{z}'')$ are bounded nonnegative functions, where $x=(y,z)=(y,{z}',{z}'')\in \mathbb{R}^m\times\mathbb{R}^3\times \mathbb{R}^{N-m-3}$;
\end{description}

\begin{description}
\item [$(C_2)$] $Q({r},{z}'')$ has a stable critical point
$({r}_0,{z}_0'')$ in the sense that
$Q({r},{z}'')$ has a critical point $({r}_0,{z}_0'')$
 satisfying ${r}_0>0$, $Q({r}_0,{z}_0'')=1$, and
\begin{equation*}
deg\big(\nabla Q({r},{z}''),({r}_0,{z}_0'')\big)\neq0;
\end{equation*}
\end{description}


\begin{description}
\item [$(C_3)$] $V({r},{z}'')\in C^1(B_\rho({r}_0,{z}_0''))$, $Q({r},{z}'')\in C^{3}(B_\rho({r}_0,{z}_0''))$, and
 \begin{equation*}
   \tilde{B}_1V({r}_0,{z}_0'')\int_{\mathbb{R}^N}U_{0,1}^2dx-\frac{\Delta Q({r}_0,{z}_0'')}{2^{\star}(N-m)}\int_{\mathbb{R}^N}\frac{z^2}{|y|}U_{0,1}^{2^{\star}}(x)dx>0,
 \end{equation*}
 where $\rho>0$ is a small constant, $\tilde{B}_1$ is a positive constant given in Lemma \ref{ener2}.
\end{description}



It is well known from \cite{CFMS,MFS} that
\begin{equation*}
  U_{\xi,\lambda}(x)=[(N-2)(m-1)]^{\frac{N-2}{2}}\Big(\frac{\lambda}{(1+\lambda|y|)^2+\lambda^2|z-\xi|^2}\Big)^{\frac{N-2}{2}},\quad \lambda>0,\quad \xi\in \mathbb{R}^{N-m},
\end{equation*}
is the unique solution of the equation
\begin{align*}
 -\Delta u(x)=\frac{u^{2^\star-1}(x)}{|y|},\quad u>0,\quad x=(y,z)\in \mathbb{R}^{m}\times\mathbb{R}^{N-m},
  \end{align*}
and $U_{\xi,\lambda}(x)$ is non-degenerate in
\begin{equation*}
D^{1,2}(\mathbb{R}^{N}):=\bigg\{u:\int_{\mathbb{R}^N}| \nabla u|^2dx<+\infty,\int_{\mathbb{R}^N}\frac{|u(x)|^{2^\star}}{|y|}dx<+\infty\bigg\},
\end{equation*}
endowed with the norm $\|u\|=(\int_{\mathbb{R}^N}| \nabla u|^2dx)^{\frac{1}{2}}$.

Define
\begin{align*}
  H_s=\Big\{u:&u\in D^{1,2}(\mathbb{R}^N),u(y,z)=u(|y|,z),u(y,z_1,z_2,z_3,z'')=u(y,z_1,-z_2,-z_3,z''),\\
  &u(y,r \cos\theta,r \sin\theta, z_3,z'')=u\Big(y,r\cos \big(\theta+\frac{2j \pi}{k}\big),r\sin \big(\theta+\frac{2j \pi}{k}\big),z_3,z''\Big)\Big\},
\end{align*}
where $r=\sqrt{z_1^2+z_2^2}$ and $\theta=\arctan \frac{z_2}{z_1}$.

 Let
 \begin{align*}
   \left\{
  \begin{array}{ll}
  \xi_j^+=\big(\bar{r}\sqrt{1-\bar{h}^2}\cos \frac{2(j-1)\pi}{k},\bar{r}\sqrt{1-\bar{h}^2}\sin \frac{2(j-1)\pi}{k},\bar{r}\bar{h},\bar{z}''\big),
  \quad &j=1,2,\cdots,k,\\
  \xi_j^-=\big(\bar{r}\sqrt{1-\bar{h}^2}\cos \frac{2(j-1)\pi}{k},\bar{r}\sqrt{1-\bar{h}^2}\sin \frac{2(j-1)\pi}{k},-\bar{r}\bar{h},\bar{z}''\big),
  \quad &j=1,2,\cdots,k,
    \end{array}
    \right.
 \end{align*}
where $\bar{z}''$ is a vector in $\mathbb{R}^{N-m-3}$, $\bar{h}\in (0,1)$ and $(\bar{r},\bar{z}'')$ is close to $(r_0,z_0'')$.

In this paper, we consider the following three cases of $\bar{h}$ in the process of constructing solutions:
\vspace{.2cm}

$\bullet$ {\bf Case 1.} $\bar{h}$ goes to 1;

\vspace{.2cm}

$\bullet$ {\bf Case 2.} $\bar{h}$ is separated from 0 and 1;

\vspace{.2cm}

$\bullet$ {\bf Case 3.} $\bar{h}$ goes to 0.

\vspace{.2cm}
We use $U_{\xi_j^{\pm},\lambda}$ to build up the approximate solution for problem \eqref{pro}. To accelerate the decay of this function when $N$ is not big enough, we define a smooth cut-off function $\eta(x)=\eta(|y|,|z'|,z'')$ satisfying $\eta=1$ if $|(|y|,r,z'')-(0,r_0,z_0'')|\leq \delta$, $\eta=0$ if $|(|y|,r,z'')-(0,r_0,z_0'')|\geq 2\delta$, and $0\leq \eta \leq 1$, where $\delta>0$ is a small constant such that $Q(r,z'')>0$ if $|(r,z'')-(r_0,z_0'')|\leq 10\delta$.

Denote
\begin{equation*}
  Z_{\xi_j^{\pm},\lambda}=\eta U_{\xi_j^{\pm},\lambda},\quad Z^*_{\bar{r},\bar{h},\bar{z}'',\lambda}=\sum\limits_{j=1}^kU_{\xi_j^{+},\lambda}+\sum\limits_{j=1}^kU_{\xi_j^{-},\lambda},\quad
  Z_{\bar{r},\bar{h},\bar{z}'',\lambda}=\sum\limits_{j=1}^k\eta U_{\xi_j^{+},\lambda}+\sum\limits_{j=1}^k\eta U_{\xi_j^{-},\lambda}.
\end{equation*}

As for the {\bf Case 1}, we assume that $\alpha=N-4-\iota$, $\iota>0$ is a small constant, $k>0$ is a large integer, $\lambda\in \big[L_0k^{\frac{N-2}{N-4-\alpha}},L_1k^{\frac{N-2}{N-4-\alpha}}\big]$ for some constants $L_1>L_0>0$ and $(\bar{r},\bar{h},\bar{z}'')$ satisfies
\begin{equation}\label{case1}
  |(\bar{r},\bar{z}'')-(r_0,z_0'')|\leq\frac{1}{\lambda^{1-\vartheta}},\quad \sqrt{1-\bar{h}^2}=M_1\lambda^{-\frac{\alpha}{N-2}}+o(\lambda^{-\frac{\alpha}{N-2}}),
\end{equation}
where $\vartheta>0$ is a small constant, $M_1$ is a positive constant. 

\begin{theorem}\label{th1}
Assume that $N\geq 7$, $\frac{N+1}{2}\leq m<N-1$, if $V(x)$ and $Q(x)$ satisfy $(C_1)$, $(C_2)$ and $(C_3)$, then there exists an integer $k_0>0$, such that for any $k>k_0$, problem \eqref{pro} has a solution $u_k$ of the form
\begin{equation*}
  u_k=Z_{\bar{r}_k,\bar{h}_k,\bar{z}_k'',\lambda_k}+\phi_k,
\end{equation*}
where $\lambda_k\in \big[L_0k^{\frac{N-2}{N-4-\alpha}},L_1k^{\frac{N-2}{N-4-\alpha}}\big]$ and $\phi_k\in H_s$. Moreover, as $k\rightarrow\infty$, $|(\bar{r}_k,\bar{z}_k'')-(r_0,z_0'')|\rightarrow0$, $\sqrt{1-\bar{h}_k^2}=M_1\lambda_k^{-\frac{\alpha}{N-2}}+o(\lambda_k^{-\frac{\alpha}{N-2}})$, and
$\lambda_k^{-\frac{N-2}{2}}\|\phi_k\|_\infty\rightarrow0$.
\end{theorem}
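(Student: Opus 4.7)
The plan is a Lyapunov--Schmidt finite dimensional reduction in the symmetric space $H_s$, combined with local Poho\u{z}aev identities, following the strategy of \cite{DMW,LN}. Let $I$ be the energy functional
$$I(u)=\tfrac{1}{2}\int_{\R^N}\big(|\nabla u|^2+V(x)u^2\big)\,dx-\tfrac{1}{2^\star}\int_{\R^N}Q(x)\tfrac{|u|^{2^\star}}{|y|}\,dx$$
restricted to $H_s$. I look for solutions of the form $u=Z_{\bar r,\bar h,\bar z'',\lambda}+\phi$, with $\phi\in H_s$ lying in the orthogonal complement $E_s$ of the approximate kernel spanned by $\partial_{\bar r}Z,\partial_{\bar h}Z,\partial_{\bar z''}Z,\partial_\lambda Z$; the parameters $(\bar r,\bar h,\bar z'',\lambda)$ are then adjusted so that all the associated Lagrange multipliers vanish.

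First I would derive sharp expansions of $I(Z_{\bar r,\bar h,\bar z'',\lambda})$ and of the error $l_k=I'(Z_{\bar r,\bar h,\bar z'',\lambda})$ in a weighted $L^\infty$ norm adapted to the slow decay of the $U_{\xi_j^\pm,\lambda}$. The new feature compared to \cite{LN} is the presence of two families of $k$ bubbles at heights $\pm\bar r\bar h$, producing both an intra-circle interaction and a cross-circle interaction between $U_{\xi_j^+,\lambda}$ and $U_{\xi_j^-,\lambda}$. The scaling $\sqrt{1-\bar h^2}\sim M_1\lambda^{-\alpha/(N-2)}$ prescribed in \eqref{case1} is precisely tuned so that these interactions balance the contributions of $V$ and $\Delta Q$ predicted by $(C_3)$. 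Using the non-degeneracy of $U_{\xi,\lambda}$ from \cite{CFMS} together with the $H_s$-symmetry to kill spurious kernel directions, the linearized operator $L_k=I''(Z)$ is invertible on $E_s$ with a norm bound uniform in large $k$; a contraction mapping then produces a unique $\phi=\phi_k(\bar r,\bar h,\bar z'',\lambda)$ satisfying $\lambda^{-(N-2)/2}\|\phi\|_\infty\to 0$.

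The reduced problem is to find critical points of $F(\bar r,\bar h,\bar z'',\lambda)=I(Z+\phi)$. For the $\bar r$- and $\bar z''$-equations, direct differentiation of $F$ combined with $(C_2)$ and a degree argument locates $(\bar r_k,\bar z_k'')\to(r_0,z_0'')$. The $\bar h$- and $\lambda$-equations are more delicate because $V$ is only of class $C^1$, so the $\lambda$-derivative of $V(\xi+\lambda^{-1}\cdot)$ would naively require $V\in C^2$. To bypass this obstruction I would apply local Poho\u{z}aev identities on small balls $B_\rho(\xi_j^\pm)$, which convert the scale and $\bar h$-derivatives into boundary integrals involving $V$ and $Q$ themselves rather than their derivatives; coupled with the positivity assumption $(C_3)$, these identities deliver $(\bar h_k,\lambda_k)$ with $\lambda_k$ in the prescribed range $[L_0k^{(N-2)/(N-4-\alpha)},L_1k^{(N-2)/(N-4-\alpha)}]$.

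The main technical obstacle is the interplay between the two families of bubbles under the delicate balance forced by \eqref{case1}: the cross-circle interaction is only one order higher than the leading $V$- and $Q$-contributions, so the expansions of $Q$ up to third order and of $V$ up to first order around $(r_0,z_0'')$ must be tracked carefully, with all residuals absorbed in the error estimate. An additional subtlety is the singular weight $|y|^{-1}$ in \eqref{pro}, which is integrable against $U_{\xi,\lambda}^{2^\star}$ thanks to the dimensional assumption $m\geq (N+1)/2$ but must nevertheless be handled in the weighted norm; this is why the cut-off $\eta$ is placed around $|y|=0,\,r=r_0,\,z''=z_0''$, so that the cross-terms of the form $Z\, U_{\xi_j^\pm,\lambda}^{2^\star-1}/|y|$ are controlled uniformly in $k$.
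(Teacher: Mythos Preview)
Your overall architecture---symmetric Lyapunov--Schmidt reduction in weighted $L^\infty$ norms, contraction for $\phi$, then local Poho\u{z}aev identities plus a degree argument on the remaining parameters---matches the paper's. However, two structural points are reversed from what the paper actually does, and as written your plan would not close.

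First, the paper does \emph{not} treat $\bar h$ as an independent reduction parameter. The constrained space $\mathbb{H}$ imposes orthogonality only to $Z_{j,l}^\pm=\partial_\lambda Z_{\xi_j^\pm,\lambda},\ \partial_{\bar r}Z_{\xi_j^\pm,\lambda},\ \partial_{\bar z_l''}Z_{\xi_j^\pm,\lambda}$ (indices $l=2,\dots,N-m$), giving exactly $N-m-1$ Lagrange multipliers $c_l$. The relation $\sqrt{1-\bar h^2}=M_1\lambda^{-\alpha/(N-2)}$ in \eqref{case1} ties $\bar h$ to $\lambda$ a priori (for a fixed $M_1>0$), so only $(\bar r,\bar z'',\lambda)$ are solved for at the reduced level; there is no separate $\bar h$-equation, and the final degree argument is applied to a map $F(t,\bar r,\bar z'')$ in $N-m-1$ variables. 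If you put $\partial_{\bar h}Z$ into the approximate kernel you will need an additional identity to kill the corresponding multiplier, and neither the $H_s$-symmetry nor an obvious Poho\u{z}aev relation supplies one.

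Second, you have the role of the Poho\u{z}aev identities inverted. In the paper the $\lambda$-equation \eqref{con3} is the \emph{direct} one (test against $\partial_\lambda Z_{\bar r,\bar h,\bar z'',\lambda}$), expanded in Lemmas~\ref{ener1} and~\ref{ener2}; this uses only $V\in C^1$ and no Poho\u{z}aev trick. The Poho\u{z}aev identities \eqref{con1}--\eqref{con2} are instead applied to the \emph{spatial} directions $(\bar r,\bar z'')$, on the single domain $D_\varrho=\{|(|y|,|z'|,z'')-(0,r_0,z_0'')|\le\varrho\}$ with $\varrho\in(2\delta,5\delta)$, not on small balls $B_\rho(\xi_j^\pm)$. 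After integration by parts and using that $u_k=\phi$ on $\partial D_\varrho$ (boundary terms controlled via Lemma~\ref{fi}), these reduce to $\nabla_{(\bar r,\bar z'')}Q(\bar r,\bar z'')=o(\lambda^{-1/2})$, i.e.\ \eqref{de1}--\eqref{de2}, which are then solved jointly with \eqref{dela} by degree using $(C_2)$. Your $C^1$-versus-$C^2$ rationale for invoking Poho\u{z}aev at the $\lambda$-step is not the operative mechanism here.
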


For the {\bf Case 2} and {\bf Case 3}, we assume that $k>0$ is a large integer, $\lambda\in \big[L_0'k^{\frac{N-2}{N-4}},L_1'k^{\frac{N-2}{N-4}}\big]$ for some constants $L_1'>L_0'>0$ and $(\bar{r},\bar{h},\bar{z}'')$ satisfies
\begin{equation}\label{case2}
  |(\bar{r},\bar{z}'')-(r_0,z_0'')|\leq\frac{1}{\lambda^{1-\vartheta}},\quad \bar{h}=a+{M_2\lambda^{-\frac{N-4}{N-2}}}+o(\lambda^{-\frac{N-4}{N-2}}),
\end{equation}
where $a\in [0,1)$, $\vartheta>0$ is a small constant, $M_2$ is a positive constant.

\begin{theorem}\label{th2}
Assume that $N\geq 7$, $\frac{N+1}{2}\leq m<N-1$, if $V(x)$ and $Q(x)$ satisfy $(C_1)$, $(C_2)$ and $(C_3)$, then there exists an integer $k_0>0$, such that for any $k>k_0$, problem \eqref{pro} has a solution $u_k$ of the form
\begin{equation*}
  u_k=Z_{\bar{r}_k,\bar{h}_k,\bar{z}_k'',\lambda_k}+\phi_k.
\end{equation*}
where $\lambda_k\in \big[L_0k^{\frac{N-2}{N-4}},L_1k^{\frac{N-2}{N-4}}\big]$ and $\phi_k\in H_s$. Moreover, as $k\rightarrow\infty$, $|(\bar{r}_k,\bar{z}_k'')-(r_0,z_0'')|\rightarrow0$, $\bar{h}_k=a+{M_2\lambda_k^{-\frac{N-4}{N-2}}}+o(\lambda_k^{-\frac{N-4}{N-2}})$, and
$\lambda_k^{-\frac{N-2}{2}}\|\phi_k\|_\infty\rightarrow0$.
\end{theorem}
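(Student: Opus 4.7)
The plan is to follow the same finite-dimensional Lyapunov--Schmidt reduction scheme used for Theorem \ref{th1}, but recalibrated to the scaling relations in \eqref{case2}. I work in the symmetric space $H_s$, so the $2k$ bubbles $U_{\xi_j^{\pm},\lambda}$ are permuted among themselves and reflection $z_3 \mapsto -z_3$ swaps the top circle with the bottom circle; this reduces the genuine parameters to the four-tuple $(\bar{r},\bar{h},\bar{z}'',\lambda)$ and guarantees that any $H_s$-valued correction that is $L^2$-orthogonal to the four-dimensional kernel of the linearized operator is automatically orthogonal to the kernels centered at every other $\xi_j^{\pm}$. I will equip $H_s$ with a weighted $L^\infty$-type norm adapted to the decay $(1+|\lambda y|+\lambda|z-\xi_j^{\pm}|)^{-(N-2)/2}$ of the bubbles and write the nonlinear problem for $\phi \in H_s$ as $\mathcal{L}_k\phi = \mathcal{N}_k(\phi) + \mathcal{E}_k$, where $\mathcal{L}_k$ is the linearization at $Z_{\bar r,\bar h,\bar z'',\lambda}$, $\mathcal{E}_k$ is the error produced by the approximate solution, and $\mathcal{N}_k$ collects the superquadratic terms.

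First I would establish invertibility of $\mathcal{L}_k$ modulo the four kernel directions in the chosen weighted norm, mimicking the contradiction argument of \cite{WWY,LN}: if invertibility failed, a rescaled sequence would converge weakly to a nontrivial element of the kernel of $-\Delta - (2^{\star}-1)U_{0,1}^{2^{\star}-2}/|y|$ orthogonal to the known kernel, contradicting the non-degeneracy result from \cite{CFMS}. The additional cut-off $\eta$ contributes only lower order terms, as in \cite{DMW}. Once $\mathcal{L}_k$ is controlled, a contraction mapping argument gives a unique $\phi_k = \phi(\bar r,\bar h,\bar z'',\lambda) \in H_s$ satisfying the reduced equation, with the quantitative estimate $\|\phi_k\| = o(\lambda^{-(N-4)/(N-2)})$ in the adapted norm, which in turn forces $\lambda_k^{-(N-2)/2}\|\phi_k\|_\infty \to 0$.

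Next I would plug the reduction back into the Lagrange multiplier identity and use local Poho\v zaev identities on small balls around $\xi_j^{+}$ to derive the finite-dimensional system for $(\bar r,\bar h,\bar z'',\lambda)$. The relevant energy expansion splits into three pieces: the one-bubble energy $A_0 + A_1 V(r_0,z_0'')\lambda^{-2}\int U_{0,1}^2 - B_0 \Delta Q(r_0,z_0'')\lambda^{-2}\int (z^2/|y|) U_{0,1}^{2^{\star}}$, the intra-circle interaction $\sum_{j\ne 1}\langle U_{\xi_1^{+}},U_{\xi_j^{+}}\rangle$ which behaves like $k\lambda^{-(N-4)}(k/(\lambda\bar r\sqrt{1-\bar h^2}))^{N-4}$, and the inter-circle interaction $\langle U_{\xi_1^{+}},U_{\xi_1^{-}}\rangle$ which scales like $(\lambda\bar r\bar h)^{-(N-2)}$. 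In Case 2 the inter-circle term is of size $\lambda^{-(N-2)}$ and is dominated, while in Case 3 it is the critical new term that generates the balance $\bar h = a + M_2\lambda^{-(N-4)/(N-2)} + o(\lambda^{-(N-4)/(N-2)})$: the $\bar h$-derivative of this expansion supplies a non-degenerate equation whose root is exactly the stated $M_2$, with sign fixed by $(C_3)$. The $\bar r$- and $\bar z''$-equations are controlled by the critical point condition $(C_2)$ together with the topological degree assumption, and the $\lambda$-equation by $(C_3)$, in analogy with \cite{LN}.

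The main obstacle will be the treatment of Case 3, where as $\bar h \to 0$ the pairs $\xi_j^{+}$ and $\xi_j^{-}$ coalesce and the interaction $\langle U_{\xi_j^+}, U_{\xi_j^-}\rangle$ is no longer negligible compared with $\lambda^{-2}$. One must both sharpen the error estimate for $\mathcal{E}_k$ to isolate the $\bar h$-dependent contribution (verifying that the new term enters the reduced functional at the same order as the $V$- and $\Delta Q$-terms in $(C_3)$), and establish the precise asymptotics of the inter-circle integrals via direct computation with the explicit form of $U_{0,1}$, following the Pohozaev-expansion style of \cite{DMW}. Once this balance is understood, Brouwer's degree applied to the reduced map delivers the pair $(\bar h_k,\lambda_k)$ with the stated asymptotics, and the full solution $u_k = Z_{\bar r_k,\bar h_k,\bar z_k'',\lambda_k} + \phi_k$ belongs to $H_s$ as required.
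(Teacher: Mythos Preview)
Your overall architecture—Lyapunov--Schmidt reduction in weighted $L^\infty$ norms, contraction mapping for $\phi$, then Poho\v zaev identities plus Brouwer degree for the finite-dimensional system—is exactly what the paper does. Two points, however, diverge from the paper in ways that matter.

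\textbf{The role of $\bar h$.} You treat $\bar h$ as a genuine unknown and expect a reduced equation (the ``$\bar h$-derivative'') to pin down $M_2$. The paper does \emph{not} do this: it fixes $\bar h$ by the ansatz \eqref{case2} with $a\in[0,1)$ and $M_2>0$ prescribed in advance, and solves only the $(N-m-1)$ equations for $(\bar r,\bar z'',\lambda)$ (no Lagrange multiplier or constraint attached to the $\bar h$-direction). This is not a cosmetic difference. The leading potential/curvature term $B_1\lambda^{-2}$ coming from $(C_3)$ is independent of $\bar h$, so $(C_3)$ cannot ``fix the sign'' of any $\bar h$-equation; and in Case~2 ($a\in(0,1)$) the dominant $\bar h$-dependent contribution to the reduced energy is the intra-circle interaction $\propto (1-\bar h^2)^{-(N-2)/2}$, whose $\bar h$-derivative does not vanish near $\bar h=a$. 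Hence $\partial_{\bar h}I\neq 0$ on the admissible set, and a degree argument in $(\bar r,\bar h,\bar z'',\lambda)$ sees no zero. Your scheme therefore needs either to drop the $\bar h$-equation (as the paper does) or to exhibit a different mechanism producing a root in $\bar h$, which your proposal does not supply.

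\textbf{Interaction exponents.} The intra-circle interaction $\sum_{j\neq 1}\int U_{\xi_1^+}^{2^\star-1}U_{\xi_j^+}/|y|$ scales like $\big(k/(\lambda\bar r\sqrt{1-\bar h^2})\big)^{N-2}$, not with exponent $N-4$; likewise the paper's inter-circle sum produces $B_4 k\,\lambda^{-(N-1)}\bar h^{-(N-3)}(1-\bar h^2)^{-1/2}$ rather than the single-pair $(\lambda\bar r\bar h)^{-(N-2)}$ you quote. With the corrected exponents the balance that yields $\lambda\sim k^{(N-2)/(N-4)}$ is between $B_1\lambda^{-3}$ and $B_3 k^{N-2}\lambda^{-(N-1)}$ in the $\lambda$-equation, and the three sub-cases in the paper's Section~\ref{four} arise from whether the $B_4$ term is negligible or comparable there—not from any equation in $\bar h$.
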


\begin{corollary}
Under the assumptions of Theorem \ref{th1} or \ref{th2}, if $n_1=2m-2$, $n_2=N-m$, $\mathcal{V}(x)=V(|z'|,z'')$, $\mathcal{Q}(x)=Q(|z'|,z'')$, then the critical Grushin-type problem \eqref{proyuan} has infinitely many solutions, which concentrate at points lying on the top and the bottom circles of a cylinder.
\end{corollary}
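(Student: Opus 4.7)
The plan is to prove the corollary by inverting the change of variable that transformed \eqref{proyuan} into \eqref{pro} in the introduction. First I would check that the hypotheses of Theorem \ref{th1} (or Theorem \ref{th2}) are satisfied under the assumptions of the corollary: the relation $n_1 = 2m - 2$ makes $n_1$ even and matches $m = (n_1+2)/2$ used in the derivation of \eqref{pro}, while the $y$-independence of $\mathcal{V}$ and $\mathcal{Q}$ reduces the identifications $V(x) = \mathcal{V}(\sqrt{|y|}, z)$, $Q(x) = \mathcal{Q}(\sqrt{|y|}, z)$ to $V = \mathcal{V}$ and $Q = \mathcal{Q}$ as functions of $(|z'|, z'')$. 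Hence $(C_1)$--$(C_3)$ hold for $(V, Q)$ and, for each integer $k \geq k_0$, there exists a solution $u_k \in H_s$ of \eqref{pro} of the form stated in the two theorems.

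Next, since $u_k \in H_s$ is radially symmetric in $y \in \mathbb{R}^m$, I would write $u_k(y, z) = w_k(|y|, z)$ and define
\begin{equation*}
\varphi_k(y, z) := w_k(|y|^2, z), \qquad y \in \mathbb{R}^{n_1},\ z \in \mathbb{R}^{n_2}.
\end{equation*}
Because $|y|^2$ is smooth on $\mathbb{R}^{n_1}$ and $n_1$ is even, $\varphi_k$ inherits the regularity of $u_k$. A direct chain-rule computation, which is exactly the reversal of the derivation preceding \eqref{pro}, then shows that $\varphi_k$ is a positive solution of \eqref{proyuan}: after the substitution $\gamma = r^2$ with $r = |y|$, the radial weight $(m-1)/\gamma$ acting on $w_k$ becomes $(n_1 - 1)/r$ acting on $\varphi_k$, and multiplying through by $4r^2$ recovers the Grushin operator $G_1$ together with the factor $4|y|^2$ in front of $\mathcal{V}$; the critical nonlinearity on the right-hand side is unchanged because $2^\star - 1 = (\Upsilon_1 + 2)/(\Upsilon_1 - 2)$. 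Membership of $\varphi_k$ in the natural Grushin--Folland--Stein Sobolev space follows from that of $u_k$ in $D^{1,2}(\mathbb{R}^N)$ by a Jacobian computation.

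Finally, to extract \emph{infinitely many} solutions with the claimed concentration geometry, I would observe that the points $\xi_j^\pm \in \mathbb{R}^{n_2}$ used to build $u_k$ lie by construction on the top and bottom circles (at heights $\pm \bar{r}_k \bar{h}_k$) of a cylinder of radius $\bar{r}_k \sqrt{1 - \bar{h}_k^2}$ in the $(z_1, z_2, z_3)$--slice of $z$--space, and $w_k(|y|^2, z)$ is peaked at $y = 0$, so $\varphi_k$ concentrates at the $2k$ points $(0_{n_1}, \xi_j^\pm) \in \mathbb{R}^{n_1+n_2}$, which still lie on the top and bottom circles of a cylinder. Since the number $2k$ of peaks tends to infinity, the solutions $\{\varphi_k\}$ are mutually distinct for different large $k$. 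The whole argument is essentially a bookkeeping reduction to Theorems \ref{th1}--\ref{th2}; the only point requiring genuine verification is the regularity and correct functional-analytic setting of $\varphi_k$ after the substitution $y \mapsto |y|^2$, which is routine.
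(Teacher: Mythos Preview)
Your proposal is correct and follows exactly the approach implicit in the paper, which states the corollary without proof because it is an immediate consequence of reversing the change of variable $v(\gamma,z)=\varphi(\sqrt{\gamma},z)$ used in the introduction to derive \eqref{pro} from \eqref{proyuan}. Your additional remarks on regularity, the Sobolev setting, and the distinctness of the $\varphi_k$ are reasonable elaborations of details the paper leaves tacit; the only minor imprecision is that the evenness of $n_1$ matters for $m=(n_1+2)/2$ to be an integer rather than for the smoothness of $|y|^2$, but this does not affect the argument.
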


\begin{remark}
{\rm  The condition $N \geq7$ is used in Lemma \ref{err} to guarantee the existence of a small constant $\iota>0$ for Theorem \ref{th1} ($\iota=0$ in Theorem \ref{th2}).} 
\end{remark}

\begin{remark}
{\rm 
The condition $\frac{N+1}{2}\leq m<N-1$ is equivalent to $1<N-m\leq m-1$, which is used to obtain Lemma \ref{AppA2}, see \cite[Lemma B.2]{WWY} for more details.}
\end{remark}

\begin{remark}
{\rm In order to estimate the local Poho\u{z}aev identity \eqref{con1}, we have to constrain $V(x)$ and $Q(x)$ independent of the first layer variables $y$ (see \eqref{trans1'} and \eqref{trans1''}).}
\end{remark}

\begin{remark}
{\rm The solutions obtained in Theorems \ref{th1} and \ref{th2} are different from those obtained in \cite{LN}.}
\end{remark}



The paper is organized as follows. In Section \ref{two}, we carry out the reduction procedure. In Section \ref{three}, we study the reduced problem and prove Theorem \ref{th1}. Theorem \ref{th2} is proved in Section \ref{four}. In Appendix \ref{AppA}, we put some basic estimates. And we give the energy expansion for the approximate solution in Appendix \ref{AppB}. Throughout the paper, $C$ denotes positive constant possibly different from line to line, $A=o(B)$ means $A/B\rightarrow 0$ and $A=O(B)$ means that $|A/B|\leq C$.

\section{Reduction argument}\label{two}
Let
\begin{equation*}
  \|u\|_*=\sup\limits_{x\in \mathbb{R}^N}\bigg(\sum\limits_{j=1}^k\Big(\frac{1}{(1+\lambda|y|+\lambda|z-\xi_j^+|)^{\frac{N-2}{2}+\tau}}+
  \frac{1}{(1+\lambda|y|+\lambda|z-\xi_j^-|)^{\frac{N-2}{2}+\tau}}
  \Big)\bigg)^{-1}\lambda^{-\frac{N-2}{2}}|u(x)|,
\end{equation*}
and
\begin{equation*}
  \|f\|_{**}=\sup\limits_{x\in \mathbb{R}^N}\bigg(\sum\limits_{j=1}^k\Big(\frac{1}{\lambda|y|(1+\lambda|y|+\lambda|z-\xi_j^+|)^{\frac{N}{2}+\tau}}+
  \frac{1}{\lambda|y|(1+\lambda|y|+\lambda|z-\xi_j^-|)^{\frac{N}{2}+\tau}}
  \Big)\bigg)^{-1}\lambda^{-\frac{N+2}{2}}|f(x)|,
\end{equation*}
where $\tau=\frac{N-4-\alpha}{N-2-\alpha}$.
For $j=1,2,\cdots,k$, denote
\begin{equation*}
  Z_{j,2}^{\pm}=\frac{\partial Z_{\xi_j^\pm,\lambda}}{\partial \lambda},\quad Z_{j,3}^{\pm}=\frac{\partial Z_{\xi_j^\pm,\lambda}}{\partial \bar{r}},\quad Z_{j,l}^{\pm}=\frac{\partial Z_{\xi_j^\pm,\lambda}}{\partial \bar{z}_l''},\quad l=4,5,\cdots,N-m.
\end{equation*}

For later calculations, we divide $\mathbb{R}^N$ into $k$ parts, for $j=1,2,\cdots,k$, define
\begin{align*}
  \Omega_j:=\bigg\{&x:x=(y,z_1,z_2,z_3,z'')\in \mathbb{R}^m\times \mathbb{R}^3\times \mathbb{R}^{N-m-3},\\
  &\Big\langle\frac{(z_1,z_2)}{|(z_1,z_2)|},\Big(\cos \frac{2(j-1)\pi}{k},\sin \frac{2(j-1)\pi}{k}\Big)\Big\rangle_{\mathbb{R}^2}\geq \cos \frac{\pi}{k}\bigg\},
\end{align*}
where $\langle,\rangle_{\mathbb{R}^2}$ denotes the dot product in $\mathbb{R}^2$. For $\Omega_j$, we further divide it into two separate parts
\begin{equation*}
  \Omega_j^+:=\big\{x:x=(y,z_1,z_2,z_3,z'')\in \Omega_j,z_3\geq0\big\},
\end{equation*}
\begin{equation*}
  \Omega_j^-:=\big\{x:x=(y,z_1,z_2,z_3,z'')\in \Omega_j,z_3<0\big\}.
\end{equation*}

We also define the constrained space
\begin{align*}
  \mathbb{H}:=\bigg\{v:v\in H_s,&\int_{\mathbb{R}^N}\frac{Z_{\xi_j^+,\lambda}^{2^{\star}-2}(x)}{|y|}Z_{j,l}^+(x)v(x)dx=0,
  \int_{\mathbb{R}^N}\frac{Z_{\xi_j^-,\lambda}^{2^{\star}-2}(x)}{|y|}Z_{j,l}^-(x)v(x)dx=0,\\
  & j=1,2,\cdots,k,\ \ l=2,3,\cdots,N-m
  \bigg\}.
\end{align*}
Consider the following linearized problem 
\begin{align}\label{lp}
 \left\{
  \begin{array}{ll}
  \ \ \ -\Delta \phi+V(r,z'')\phi-(2^{\star}-1)Q(r,z'')\frac{Z_{\bar{r},\bar{h},\bar{z}'',\lambda}^{2^{\star}-2}}{|y|}\phi
  \\=f+\sum\limits_{l=2}^{N-m}c_l\sum\limits_{j=1}^k\bigg
  (\frac{Z_{\xi_j^+,\lambda}^{2^{\star}-2}}{|y|}Z_{j,l}^++\frac{Z_{\xi_j^-,\lambda}^{2^{\star}-2}}{|y|}Z_{j,l}^-\bigg),
  \quad  \mbox{in $\mathbb{R}^N$},\\
  \phi\in \mathbb{H},
    \end{array}
    \right.
\end{align}
for some real numbers $c_l$. 

In the sequel of this section, we assume that $(\bar{r},\bar{h},\bar{z}'')$ satisfies \eqref{case1}.

\begin{lemma}\label{xian}
Assume that $\phi_k$ solves \eqref{lp} for $f=f_k$. If $\|f_k\|_{**}$ goes to zero as $k$ goes to infinity, so does $\|\phi_k\|_*$.
\end{lemma}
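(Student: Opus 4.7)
The plan is to argue by contradiction in the standard manner for this type of finite-dimensional reduction. Assume the conclusion fails: then after rescaling we may suppose $\|\phi_k\|_*=1$ while $\|f_k\|_{**}\to 0$. The goal is to derive a contradiction using the pointwise $\|\cdot\|_*$--$\|\cdot\|_{**}$ machinery together with non-degeneracy of $U_{\xi,\lambda}$ and the orthogonality constraints defining $\mathbb{H}$.

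First I would dispose of the Lagrange multipliers. Testing \eqref{lp} against each $Z_{j,l}^{\pm}$, using the symmetries of $H_s$ to reduce to a single index, and exploiting the near-orthogonality of the $Z_{j,l}^{\pm}$'s (the diagonal term $\int \frac{Z_{\xi_j^\pm,\lambda}^{2^\star-2}}{|y|}(Z_{j,l}^\pm)^2$ being uniformly bounded below after suitable rescaling, with the off-diagonal and cross cluster interactions vanishing as $k\to\infty$), one extracts
\[
|c_l|\;\le\; C\bigl(\|f_k\|_{**}+o(1)\|\phi_k\|_*\bigr),\qquad l=2,\dots,N-m,
\]
where the $o(1)$ comes from the small interaction between distinct bubbles $U_{\xi_j^\pm,\lambda}$ once $k$ is large and $\bar h$ is as in \eqref{case1}.

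Next I would establish the basic pointwise estimate. Rewrite \eqref{lp} as $-\Delta\phi_k=g_k$ with
\[
g_k=-V\phi_k+(2^\star-1)Q\frac{Z_{\bar r,\bar h,\bar z'',\lambda}^{2^\star-2}}{|y|}\phi_k+f_k+\sum_{l=2}^{N-m}c_l\sum_{j=1}^k\Bigl(\tfrac{Z_{\xi_j^+,\lambda}^{2^\star-2}}{|y|}Z_{j,l}^++\tfrac{Z_{\xi_j^-,\lambda}^{2^\star-2}}{|y|}Z_{j,l}^-\Bigr),
\]
and apply Green's representation $\phi_k(x)=c_N\int_{\mathbb{R}^N}|x-\eta|^{-(N-2)}g_k(\eta)\,d\eta$. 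The dominant term is $(2^\star-1)Q\,Z_{\bar r,\bar h,\bar z'',\lambda}^{2^\star-2}|y|^{-1}\phi_k$. For this I would invoke the convolution estimate in Appendix \ref{AppA} (the analogue of Lemma B.2 in \cite{WWY}, which is exactly why the hypothesis $\tfrac{N+1}{2}\le m<N-1$ appears), together with $|\phi_k(x)|\le \|\phi_k\|_*\lambda^{(N-2)/2}\sum_j(\cdots)$, to obtain
\[
\|\phi_k\|_*\;\le\;\bigl(o(1)+\tfrac12\bigr)\|\phi_k\|_*+C\|f_k\|_{**}+C\sum_l|c_l|\cdot o(1),
\]
where the factor $o(1)+\tfrac12$ reflects that the weight $(1+\lambda|y|+\lambda|z-\xi_j^\pm|)^{-(N-2)/2-\tau}$ decays, under the exponent $\tau=(N-4-\alpha)/(N-2-\alpha)$, faster than the integrand produced by the convolution, with the strict inequality absorbing the $V$-term when $V$ is bounded. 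This reduces everything to controlling $\|\phi_k\|_*$ in a neighborhood of the concentration points.

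The final step is a local blow-up argument. Assume $\|\phi_k\|_*$ is attained (or almost attained) at some point $x_k$ satisfying $\lambda|x_k-\xi_{j_k}^{\pm}|=O(1)$ for some $j_k$. By the symmetry we may take $j_k=1$; set $\tilde\phi_k(x)=\lambda^{-(N-2)/2}\phi_k(\xi_1^++x/\lambda)$. Then $\tilde\phi_k$ is uniformly bounded, the rescaled equation converges (up to a shift in $y$-coordinate, using $\lambda|y|\to\infty$ at the blow-up center) to
\[
-\Delta\tilde\phi_\infty-(2^\star-1)\frac{U_{0,1}^{2^\star-2}}{|y|}\tilde\phi_\infty=0\qquad\text{in }\mathbb{R}^N,
\]
and the rescaled orthogonality conditions pass to the limit to give $\int U_{0,1}^{2^\star-2}|y|^{-1}\partial_\lambda U_{0,1}\tilde\phi_\infty=0$ and analogous conditions for the translation kernels. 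Non-degeneracy of $U_{0,1}$ in $D^{1,2}(\mathbb{R}^N)$, stated in the introduction via \cite{CFMS}, forces $\tilde\phi_\infty\equiv 0$, contradicting $|\tilde\phi_k(x_k\text{-scaled})|\ge c>0$. If instead the supremum is attained far from every $\xi_j^\pm$, the pointwise estimate from the previous paragraph gives $\|\phi_k\|_*=o(1)$ directly, again contradicting $\|\phi_k\|_*=1$. The main obstacle is the careful convolution estimate involving the weight $1/|y|$, which controls the coefficient in front of $\|\phi_k\|_*$ on the right-hand side; this is precisely where the constraint on $m$ and the choice of $\tau$ enter.
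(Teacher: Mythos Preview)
Your overall strategy matches the paper's almost step for step: argue by contradiction with $\|\phi_k\|_*=1$, test \eqref{lp} against $Z_{1,t}^+$ to bound the $c_l$, apply Green's representation together with the convolution lemmas of Appendix~\ref{AppA}, and finish by blowing up at a concentration point and invoking the non-degeneracy of $U_{0,\Lambda}$.

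The one step that would fail as written is your intermediate inequality $\|\phi_k\|_*\le\bigl(o(1)+\tfrac12\bigr)\|\phi_k\|_*+C\|f_k\|_{**}+\ldots$. No such global contraction factor is available; if it were, the blow-up step would be unnecessary. What the paper actually uses (Lemma~\ref{AppA3}) is that convoluting the Green kernel against $\frac{Z_{\bar r,\bar h,\bar z'',\lambda}^{2^\star-2}}{|y|}$ times the weight at level $\tfrac{N-2}{2}+\tau$ returns the weight at the \emph{improved} level $\tfrac{N-2}{2}+\tau+\sigma$ for some small $\sigma>0$. Collecting the three pieces gives
\[
\|\phi_k\|_*\;\le\;C\Bigl(o(1)+\|f_k\|_{**}\Bigr)+C\,\frac{\sum_{j}\bigl(1+\lambda|y|+\lambda|z-\xi_j^\pm|\bigr)^{-(\frac{N-2}{2}+\tau+\sigma)}}{\sum_{j}\bigl(1+\lambda|y|+\lambda|z-\xi_j^\pm|\bigr)^{-(\frac{N-2}{2}+\tau)}},
\]
and the point is that this ratio is merely bounded near the centers $(0,\xi_j^\pm)$ while it tends to $0$ away from all of them. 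That is what forces $\lambda^{-(N-2)/2}|\phi_k|$ to stay bounded below on some ball $B_{R/\lambda}(0,\xi_j^*)$, setting up the blow-up. One further correction: the blow-up center is $(0,\xi_j^*)$ with $y$-component equal to $0$, so there is no ``shift in $y$'' and no use of ``$\lambda|y|\to\infty$''; the limit equation is exactly $-\Delta u-(2^\star-1)\frac{U_{0,\Lambda}^{2^\star-2}}{|y|}u=0$ as in the paper.
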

\begin{proof}
Assume by contradiction that there exist $k\rightarrow\infty$, $\lambda_k\in \big[L_0k^{\frac{N-2}{N-4-\alpha}},L_1k^{\frac{N-2}{N-4-\alpha}}\big]$, $(\bar{r}_k,\bar{h}_k,\bar{z}_k'')$ satisfying \eqref{case1} and $\phi_k$ solving \eqref{lp} for $f=f_k$, $\lambda=\lambda_k$, $\bar{r}=\bar{r}_k$, $\bar{h}=\bar{h}_k$, $\bar{z}''=\bar{z}_k''$ with $\|f_k\|_{**}\rightarrow 0$ and $\|\phi_k\|_* \geq C>0$. Without loss of generality, we assume that $\|\phi_k\|_*=1$. For simplicity, we drop the subscript $k$.

From \eqref{lp}, we have
\begin{align*}
  |\phi(x)|\leq &C \int_{\mathbb{R}^N}\frac{1}{|x-\tilde{x}|^{N-2}}\frac{Z_{\bar{r},\bar{h},\bar{z}'',\lambda}^{2^{\star}-2}(\tilde{x})}{|\tilde{y}|}|\phi(\tilde{x})|d\tilde{x}+C \int_{\mathbb{R}^N}\frac{1}{|x-\tilde{x}|^{N-2}}|f(\tilde{x})|d\tilde{x}\\
  &+C \int_{\mathbb{R}^N}\frac{1}{|x-\tilde{x}|^{N-2}}\bigg|\sum\limits_{l=2}^{N-m}c_l\sum\limits_{j=1}^k\bigg
  (\frac{Z_{\xi_j^+,\lambda}^{2^{\star}-2}(\tilde{x})}{|\tilde{y}|}Z_{j,l}^+(\tilde{x})+\frac{Z_{\xi_j^-,\lambda}^{2^{\star}-2}(\tilde{x})}{|\tilde{y}|}Z_{j,l}^-(\tilde{x})\bigg)\bigg|d\tilde{x}\\
  :=&I_1+I_2+I_3.
\end{align*}
By Lemma \ref{AppA3}, we deduce that
\begin{align*}
 I_1 \leq &C\|\phi\|_*\lambda^{\frac{N-2}{2}}\int_{\mathbb{R}^N}\frac{Z_{\bar{r},\bar{h},\bar{z}'',\lambda}^{2^{\star}-2}(\tilde{x})}{|\tilde{y}||x-\tilde{x}|^{N-2}}\sum\limits_{j=1}^k
 \Big(\frac{1}{(1+\lambda|\tilde{y}|+\lambda|\tilde{z}-\xi_j^+|)^{\frac{N-2}{2}+\tau}}+
  \frac{1}{(1+\lambda|\tilde{y}|+\lambda|\tilde{z}-\xi_j^-|)^{\frac{N-2}{2}+\tau}}\Big)
  d\tilde{x}\\
  \leq &C\|\phi\|_*\lambda^{\frac{N-2}{2}}\sum\limits_{j=1}^k\Big(\frac{1}{(1+\lambda|y|+\lambda|z-\xi_j^+|)^{\frac{N-2}{2}+\tau+\sigma}}+
  \frac{1}{(1+\lambda|y|+\lambda|z-\xi_j^-|)^{\frac{N-2}{2}+\tau+\sigma}}\Big),
\end{align*}
where $\sigma>0$ is a small constant.

It follows from Lemma \ref{AppA2} that
\begin{align*}
   I_2\leq &C\|f\|_{**}\lambda^{\frac{N+2}{2}}\\
   &\times\int_{\mathbb{R}^N}\frac{1}{|x-\tilde{x}|^{N-2}}\sum\limits_{j=1}^k\Big(\frac{1}{\lambda|\tilde{y}|(1+\lambda|\tilde{y}|+\lambda|\tilde{z}-\xi_j^+|)^{\frac{N}{2}+\tau}}+
  \frac{1}{\lambda|\tilde{y}|(1+\lambda|\tilde{y}|+\lambda|\tilde{z}-\xi_j^-|)^{\frac{N}{2}+\tau}}\Big)
  d\tilde{x}\\
  \leq &C\|f\|_{**}\lambda^{\frac{N-2}{2}}\sum\limits_{j=1}^k\Big(\frac{1}{(1+\lambda|y|+\lambda|z-\xi_j^+|)^{\frac{N-2}{2}+\tau}}+
  \frac{1}{(1+\lambda|y|+\lambda|z-\xi_j^-|)^{\frac{N-2}{2}+\tau}}\Big).
\end{align*}

From Lemma \ref{AppA4}, we have
\begin{equation*}
 |Z_{j,2}^{\pm}|\leq C\lambda^{-\beta_1}Z_{\xi_j^\pm,\lambda},\quad |Z_{j,l}^{\pm}|\leq C\lambda Z_{\xi_j^\pm,\lambda},\quad l=3,4,\cdots,N-m,
\end{equation*}
where $\beta_1=\frac{\alpha}{N-2}$. This with Lemma \ref{AppA2} yields
\begin{align*}
  I_3\leq & C\lambda^{\frac{N+2}{2}+\eta_l}\sum\limits_{l=2}^{N-m}|c_l|\int_{\mathbb{R}^N}\frac{1}{\lambda|\tilde{y}||x-\tilde{x}|^{N-2}}\sum\limits_{j=1}^k\Big(\frac{1}{(1+\lambda|\tilde{y}|+\lambda|\tilde{z}-\xi_j^+|)^{N}}+
  \frac{1}{(1+\lambda|\tilde{y}|+\lambda|\tilde{z}-\xi_j^-|)^{N}}\Big)
  d\tilde{x}\\
  \leq &C\lambda^{\frac{N-2}{2}+\eta_l}\sum\limits_{l=2}^{N-m}|c_l|\sum\limits_{j=1}^k\Big(\frac{1}{(1+\lambda|y|+\lambda|z-\xi_j^+|)^{\frac{N-2}{2}+\tau}}+
  \frac{1}{(1+\lambda|y|+\lambda|z-\xi_j^-|)^{\frac{N-2}{2}+\tau}}\Big),
\end{align*}
where $\eta_2=-\beta_1$, $\eta_l=1$ for $l=3,4,\cdots,N-m$.

In the following, we estimate $c_l$, $l=2,3,\cdots,N-m$. Multiplying \eqref{lp} by $Z_{1,t}^+$ ($t=2,3,\cdots,N-m$), and integrating in $\mathbb{R}^N$, we have
\begin{align}\label{xiangu1}
  &\sum\limits_{l=2}^{N-m}c_l\sum\limits_{j=1}^k\int_{\mathbb{R}^N}\bigg
  (\frac{Z_{\xi_j^+,\lambda}^{2^{\star}-2}(x)}{|y|}Z_{j,l}^+(x)+\frac{Z_{\xi_j^-,\lambda}^{2^{\star}-2}(x)}{|y|}Z_{j,l}^-(x)\bigg)Z_{1,t}^+(x)d x \nonumber\\
  =&\Big\langle-\Delta \phi+V(r,z'')\phi-(2^{\star}-1)Q(r,z'')\frac{Z_{\bar{r},\bar{h},\bar{z}'',\lambda}^{2^{\star}-2}}{|y|}\phi,Z_{1,t}^+\Big\rangle-\langle f, Z_{1,t}^+\rangle.
\end{align}
By the orthogonality, we get
\begin{equation}\label{xiangu2}
  \sum\limits_{j=1}^k\int_{\mathbb{R}^N}\bigg
  (\frac{Z_{\xi_j^+,\lambda}^{2^{\star}-2}(x)}{|y|}Z_{j,l}^+(x)+\frac{Z_{\xi_j^-,\lambda}^{2^{\star}-2}(x)}{|y|}Z_{j,l}^-(x)\bigg)Z_{1,t}^+(x)d x=c_0\delta_{l t}\lambda^{2\eta_l}+o(\lambda^{\eta_l}),
\end{equation}
for some constant $c_0>0$.

Using Lemmas \ref{AppA1} and \ref{AppA5}, we obtain
\begin{align}\label{toener1}
  &|\langle V(r,z'')\phi,Z_{1,t}^+\rangle| \nonumber\\
  \leq & C\|\phi\|_*\lambda^{{N-2}+\eta_t}\int_{\mathbb{R}^N}\frac{1}{(1+\lambda|y|+\lambda|z-\xi_1^+|)^{N-2}} \nonumber\\
  &\times\sum\limits_{j=1}^k\Big(\frac{1}{(1+\lambda|y|+\lambda|z-\xi_j^+|)^{\frac{N-2}{2}+\tau}}+
  \frac{1}{(1+\lambda|y|+\lambda|z-\xi_j^-|)^{\frac{N-2}{2}+\tau}}\Big)
  dx \nonumber\\
  \leq &C\|\phi\|_*\lambda^{{N-2}+\eta_t}\int_{\mathbb{R}^N}\Big(\frac{1}{(1+\lambda|y|+\lambda|z-\xi_1^+|)^{\frac{3(N-2)}{2}+\tau}}+\sum\limits_{j=2}^k\frac{1}
  {(1+\lambda|y|+\lambda|z-\xi_1^+|)^{N-2}} \nonumber\\
  &\times \frac{1}{(1+\lambda|y|+\lambda|z-\xi_j^+|)^{\frac{N-2}{2}+\tau}}+\sum\limits_{j=1}^k \frac{1}{(1+\lambda|y|+\lambda|z-\xi_1^+|)^{N-2}}
  \frac{1}{(1+\lambda|y|+\lambda|z-\xi_j^-|)^{\frac{N-2}{2}+\tau}}\Big)
  dx \nonumber\\
  \leq &C\|\phi\|_*\lambda^{{N-2}+\eta_t}\Big(\lambda^{-N}+\lambda^{-N}
  \sum\limits_{j=2}^k\frac{1}{(\lambda|\xi_j^+-\xi_1^+|)^{\tau}}
  +\lambda^{-N}
  \sum\limits_{j=1}^k\frac{1}{(\lambda|\xi_j^--\xi_1^+|)^{\tau}}
  \Big) \nonumber\\
   \leq &C\frac{\lambda^{\eta_t}\|\phi\|_*}{\lambda^{2}}
  \leq  C\frac{\lambda^{\eta_t}\|\phi\|_*}{\lambda^{1+\varepsilon}},
\end{align}
where $\varepsilon>0$ is a small constant.

Similarly, we have
\begin{align*}
 | \langle f, Z_{1,t}^+\rangle|\leq &C\|f\|_{**}\lambda^{{N}+\eta_t}\int_{\mathbb{R}^N}\frac{1}{(1+\lambda|y|+\lambda|z-\xi_1^+|)^{N-2}}\\
  &\times\sum\limits_{j=1}^k\Big(\frac{1}{\lambda|y|(1+\lambda|y|+\lambda|z-\xi_j^+|)^{\frac{N}{2}+\tau}}+
  \frac{1}{\lambda|y|(1+\lambda|y|+\lambda|z-\xi_j^-|)^{\frac{N}{2}+\tau}}\Big)
  dx\\
  \leq &C\lambda^{\eta_t}\|f\|_{**}.
\end{align*}

On the other hand, a direct computation gives
\begin{equation}\label{toener2}
  \Big\langle-\Delta \phi-(2^{\star}-1)Q(r,z'')\frac{Z_{\bar{r},\bar{h},\bar{z}'',\lambda}^{2^{\star}-2}}{|y|}\phi,Z_{1,t}^+\Big\rangle=O\Big(\frac{\lambda^{\eta_t}\|\phi\|_*}{\lambda^{1+\varepsilon}}\Big).
\end{equation}
Hence, we conclude that
\begin{equation*}
 \Big\langle-\Delta \phi+V(r,z'')\phi-(2^{\star}-1)Q(r,z'')\frac{Z_{\bar{r},\bar{h},\bar{z}'',\lambda}^{2^{\star}-2}}{|y|}\phi,Z_{1,t}^+\Big\rangle-\langle f, Z_{1,t}^+\rangle=O\Big(\lambda^{\eta_t}\big(\frac{\|\phi\|_*}{\lambda^{1+\varepsilon}}+\|f\|_{**}\big)\Big),
\end{equation*}
which together with \eqref{xiangu1} and \eqref{xiangu2} yields
\begin{equation*}
  c_l=\frac{1}{\lambda^{\eta_l}}\big(o(\|\phi\|_*)+O(\|f\|_{**})\big).
\end{equation*}
So
\begin{equation*}
  \|\phi\|_*\leq C\left(o(1)+\|f\|_{**}+\frac{\sum\limits_{j=1}^k\Big(\frac{1}{(1+\lambda|y|+\lambda|z-\xi_j^+|)^{\frac{N-2}{2}+\tau+\sigma}}+
  \frac{1}{(1+\lambda|y|+\lambda|z-\xi_j^-|)^{\frac{N-2}{2}+\tau+\sigma}}\Big)}{\sum\limits_{j=1}^k\Big(\frac{1}{(1+\lambda|y|+\lambda|z-\xi_j^+|)^{\frac{N-2}{2}+\tau}}+
  \frac{1}{(1+\lambda|y|+\lambda|z-\xi_j^-|)^{\frac{N-2}{2}+\tau}}\Big)}\right).
\end{equation*}
This with $\|\phi\|_*=1$ implies that there exists $R>0$ such that
\begin{equation}\label{xiandayu}
  \|\lambda^{-\frac{N-2}{2}}\phi(x)\|_{L^\infty(B_{R/\lambda}(0,\xi_j^*))}\geq \widetilde{C}>0,
\end{equation}
for some $j$ with $\xi_j^*=\xi_j^+$ or $\xi_j^-$, where $\widetilde{C}$ is a positive constant. Furthermore, for this particular $j$, $\tilde{\phi}(x)=\lambda^{-\frac{N-2}{2}}\phi\big(\lambda^{-1}x+(0,\xi_j^*)\big)$ converges uniformly on any compact set to a solution of the equation
\begin{equation}\label{xianlim}
  -\Delta u(x)-(2^{\star}-1)\frac{U_{0,\Lambda}^{2^{\star}-2}(x)}{|y|}u(x)=0,\quad \text{in $\mathbb{R}^N$},
\end{equation}
for some $\Lambda\in [\Lambda_1,\Lambda_2]$ and $u$ is perpendicular to the kernel of  \eqref{xianlim}, according to the definition of $\mathbb{H}$. Hence, $u=0$, which contradicts \eqref{xiandayu}.
\end{proof}

By using Lemma \ref{xian} and similar arguments of \cite[Proposition 4.1]{dFM}, we get the following result.
\begin{lemma}\label{dc}
There exists an integer $k_0>0$, such that for any $k\geq k_0$ and $f\in L^\infty(\mathbb{R}^N)$, problem \eqref{lp} has a unique solution $\phi=L_k(f)$. Moreover,
\begin{equation*}
  \|L_k(f)\|_*\leq C\|f\|_{**},\quad |c_l|\leq \frac{C}{\lambda^{\eta_l}}\|f\|_{**},
\end{equation*}
where $\eta_2=-\beta_1$, $\eta_l=1$ for $l=3,4,\cdots,N-m$.
\end{lemma}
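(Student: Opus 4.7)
\textbf{Proof plan for Lemma \ref{dc}.} The strategy is the standard finite-dimensional reduction argument: first recast \eqref{lp} as an operator equation of the form $(I-K)\phi=h$ on the Hilbert space $\mathbb{H}$, then use Lemma \ref{xian} together with Fredholm theory to obtain unique solvability, and finally read off the estimate on the Lagrange multipliers $c_{l}$ from the orthogonality conditions.

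The first step is to endow $\mathbb{H}$ with the inner product inherited from $D^{1,2}(\mathbb{R}^N)$ and rewrite \eqref{lp} in weak form. For $\phi\in\mathbb{H}$, define $T\phi\in\mathbb{H}$ by Riesz representation so that
\begin{equation*}
\langle T\phi,\psi\rangle_{D^{1,2}}=\int_{\mathbb{R}^N}\Bigl((2^{\star}-1)Q(r,z'')\frac{Z_{\bar r,\bar h,\bar z'',\lambda}^{2^{\star}-2}}{|y|}-V(r,z'')\Bigr)\phi\,\psi\,dx,\qquad \forall\,\psi\in\mathbb{H},
\end{equation*}
and let $h\in\mathbb{H}$ be the Riesz representative of $f$ after projecting onto $\mathbb{H}$ (the Lagrange multipliers on the right-hand side of \eqref{lp} drop out under this projection by the very definition of $\mathbb{H}$). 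Then \eqref{lp} is equivalent to $\phi-T\phi=h$ in $\mathbb{H}$. A standard argument based on the critical Hardy--Sobolev embedding and the rapid decay of $Z_{\bar r,\bar h,\bar z'',\lambda}^{2^{\star}-2}$ shows that $T$ is a compact operator on $\mathbb{H}$, so the Fredholm alternative applies.

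Second, I will upgrade the a priori estimate of Lemma \ref{xian}, stated in the $\|\cdot\|_*$ norm, into injectivity of $I-T$ on $\mathbb{H}$. If $(I-T)\phi=0$, then $\phi$ solves the homogeneous version of \eqref{lp} with $f\equiv 0$ and some multipliers $c_l$; applying Lemma \ref{xian} to a suitable $L^\infty$-controlled truncation (or directly to $\phi$, after verifying $\|\phi\|_*<\infty$ by bootstrapping via the integral representation used in Lemma \ref{xian}) forces $\|\phi\|_*=0$. Hence $I-T$ is injective, therefore surjective, and invertible by the open mapping theorem; this yields a unique solution $\phi=L_k(f)\in\mathbb{H}$ for every $f\in L^\infty(\mathbb{R}^N)$. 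The estimate $\|L_k(f)\|_*\leq C\|f\|_{**}$ is then Lemma \ref{xian} applied to the constructed $\phi$, and a bootstrap through the integral form of \eqref{lp} (using Lemmas \ref{AppA2} and \ref{AppA3}) shows that the $\|\cdot\|_*$-norm is finite whenever $\|f\|_{**}<\infty$.

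Third, the bound on each $c_l$ follows from the very computation carried out in Lemma \ref{xian}: testing \eqref{lp} against $Z_{1,t}^{+}$ gave
\begin{equation*}
\Bigl(c_{0}\delta_{lt}\lambda^{2\eta_l}+o(\lambda^{\eta_l})\Bigr)c_{l}=O\!\left(\lambda^{\eta_t}\bigl(\tfrac{\|\phi\|_*}{\lambda^{1+\varepsilon}}+\|f\|_{**}\bigr)\right),
\end{equation*}
so inverting the almost-diagonal system and using the just-proved bound $\|\phi\|_{*}\leq C\|f\|_{**}$ gives $|c_l|\leq C\lambda^{-\eta_l}\|f\|_{**}$. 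The main technical obstacle is the verification of compactness of $T$ and the justification that $\phi\in\mathbb{H}$ automatically satisfies $\|\phi\|_*<\infty$: this requires repeatedly convolving the kernel $|x-\tilde x|^{2-N}$ against the weighted sums of $Z_{\xi_j^\pm,\lambda}^{2^\star-2}$ that appear in the right-hand side of \eqref{lp} and controlling them through Lemmas \ref{AppA2}, \ref{AppA3}, exactly as in Proposition~4.1 of \cite{dFM}; once this is in place the rest of the argument is purely functional-analytic.
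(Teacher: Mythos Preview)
Your proposal is correct and matches the paper's approach: the paper gives no detailed proof, writing only ``By using Lemma \ref{xian} and similar arguments of \cite[Proposition 4.1]{dFM}, we get the following result.'' Your outline---rewriting \eqref{lp} as $(I-T)\phi=h$ on $\mathbb{H}$, establishing compactness of $T$, invoking Lemma \ref{xian} for injectivity, applying the Fredholm alternative, and extracting the $c_l$ bounds from the computations already performed in Lemma \ref{xian}---is precisely what the citation to \cite{dFM} amounts to.
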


Now, we consider a perturbation problem for \eqref{pro}, namely,
\begin{align}\label{pp}
 \left\{
  \begin{array}{ll}
  \ \ \ -\Delta (Z_{\bar{r},\bar{h},\bar{z}'',\lambda}+\phi)+V(r,z'')(Z_{\bar{r},\bar{h},\bar{z}'',\lambda}+\phi)\\
  =Q(r,z'')\frac{(Z_{\bar{r},\bar{h},\bar{z}'',\lambda}+\phi)_+^{2^{\star}-1}}{|y|}+\sum\limits_{l=2}^{N-m}c_l\sum\limits_{j=1}^k\bigg
  (\frac{Z_{\xi_j^+,\lambda}^{2^{\star}-2}}{|y|}Z_{j,l}^++\frac{Z_{\xi_j^-,\lambda}^{2^{\star}-2}}{|y|}Z_{j,l}^-\bigg),
  \quad  \mbox{in $\mathbb{R}^N$},\\
  \phi\in \mathbb{H}.
    \end{array}
    \right.
\end{align}
For \eqref{pp}, we have the following existence result which is very important in this section.

\begin{proposition}\label{fixed}
There exists an integer $k_0>0$, such that for any $k\geq k_0$, $\lambda\in \big[L_0k^{\frac{N-2}{N-4-\alpha}},L_1k^{\frac{N-2}{N-4-\alpha}}\big]$, $(\bar{r},\bar{h},\bar{z}'')$ satisfies \eqref{case1}, problem \eqref{pp} has a unique solution $\phi=\phi_{\bar{r},\bar{h},\bar{z}'',\lambda}$ satisfying
\begin{equation*}
  \|\phi\|_*\leq C\big(\frac{1}{\lambda}\big)^{\frac{3-\beta_1}{2}+\varepsilon},\quad |c_l|\leq C\big(\frac{1}{\lambda}\big)^{\frac{3-\beta_1}{2}+\eta_l+\varepsilon},
\end{equation*}
where $\varepsilon>0$ is a small constant.
\end{proposition}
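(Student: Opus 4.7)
The strategy is the standard one for gluing-type reductions: rewrite \eqref{pp} as a fixed-point equation in the Banach space $(\mathbb{H},\|\cdot\|_*)$ and apply the contraction mapping principle, using the invertibility given by Lemma \ref{dc}.

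More precisely, denote $Z := Z_{\bar{r},\bar{h},\bar{z}'',\lambda}$ and split the equation \eqref{pp} into the linearized operator acting on $\phi$, the error of the approximate solution
\[
l_k \;:=\; -\Delta Z + V(r,z'')Z - Q(r,z'')\frac{Z^{2^\star-1}}{|y|},
\]
and the quadratic remainder
\[
N(\phi) \;:=\; Q(r,z'')\,\frac{(Z+\phi)_+^{2^\star-1} - Z^{2^\star-1} - (2^\star-1)Z^{2^\star-2}\phi}{|y|}.
\]
Then \eqref{pp} is equivalent to
\[
L(\phi) \;=\; l_k + N(\phi) + \sum_{l=2}^{N-m} c_l \sum_{j=1}^{k}\Bigl(\tfrac{Z_{\xi_j^+,\lambda}^{2^\star-2}}{|y|}Z_{j,l}^+ + \tfrac{Z_{\xi_j^-,\lambda}^{2^\star-2}}{|y|}Z_{j,l}^-\Bigr),
\]
so Lemma \ref{dc} lets us recast the problem as $\phi = \mathcal{T}(\phi) := L_k\bigl(l_k + N(\phi)\bigr)$, with the Lagrange multipliers $c_l$ determined a posteriori.

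The first step is to estimate $\|l_k\|_{**}$. The key claim is that
\[
\|l_k\|_{**} \;\leq\; C\Bigl(\tfrac{1}{\lambda}\Bigr)^{\frac{3-\beta_1}{2}+\varepsilon},
\]
which under \eqref{case1} is Lemma \ref{err} (the reason $N\geq 7$ and $\iota>0$ enter). This error comes from four sources: (i) the cut-off $\eta$ introduces terms supported away from $\xi_j^\pm$ which decay polynomially fast; (ii) the potential contributes $V(r,z'') Z = O(Z)$, controlled in the $\|\cdot\|_{**}$ norm because of the $|y|$-weight; (iii) since $Q(r_0,z_0'')=1$ and $(r_0,z_0'')$ is critical, Taylor-expanding $Q(r,z'')-1$ gives quadratic vanishing; (iv) the cross-terms from expanding $(\sum U_{\xi_j^\pm,\lambda})^{2^\star-1}-\sum U_{\xi_j^\pm,\lambda}^{2^\star-1}$ are handled via Lemma \ref{AppA1} and the distance $|\xi_j^\pm-\xi_i^\pm| \sim \bar{r}/k$, yielding the loss factor $k^{-\tau}$. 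Collecting these and using $\lambda\sim k^{(N-2)/(N-4-\alpha)}$ produces the asserted bound.

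The second step is the contraction estimate. By the pointwise inequality $|(Z+\phi)_+^{2^\star-1}-Z^{2^\star-1}-(2^\star-1)Z^{2^\star-2}\phi| \leq C(|\phi|^{2^\star-1}+Z^{2^\star-3}\phi^2)$ (the second term drops when $2^\star<3$, i.e., $N-m<3$, which does not occur here) together with Lemma \ref{AppA2}, one obtains
\[
\|N(\phi)\|_{**} \;\leq\; C\bigl(\|\phi\|_*^{\min(2,2^\star-1)} + \|\phi\|_*^{2^\star-1}\bigr),
\qquad
\|N(\phi_1)-N(\phi_2)\|_{**} \;\leq\; o(1)\|\phi_1-\phi_2\|_*,
\]
on the ball $\mathcal{B} := \{\phi\in\mathbb{H} : \|\phi\|_* \leq C_0(1/\lambda)^{(3-\beta_1)/2+\varepsilon}\}$ for large $k$. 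Combining with Lemma \ref{dc}, we see that $\mathcal{T}$ maps $\mathcal{B}$ to itself and is a contraction, so Banach's fixed-point theorem gives a unique $\phi\in\mathcal{B}$. The bound on $c_l$ then follows from the second estimate of Lemma \ref{dc} applied to $f = l_k + N(\phi)$.

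The main obstacle is the error estimate in the first step: one must simultaneously keep track of the weak decay rate $\tau=(N-4-\alpha)/(N-2-\alpha)$ in the $\|\cdot\|_{**}$ norm, the delicate balance between $\lambda$ and $k$ implied by \eqref{case1} (in particular the distance $\bar r\bar h$ between the top and bottom circles, which controls the cross-interaction between $U_{\xi_j^+,\lambda}$ and $U_{\xi_j^-,\lambda}$), and the quadratic vanishing of $Q-1$ at the critical point. Once Lemma \ref{err} is available, the fixed-point argument is routine.
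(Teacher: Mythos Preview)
Your approach is essentially identical to the paper's: rewrite \eqref{pp} as the fixed-point equation $\phi = L_k(E_k + N(\phi))$ via Lemma~\ref{dc}, invoke Lemma~\ref{err} for the error (your $l_k$, up to sign) and Lemma~\ref{non} for $N(\phi)$, then apply the contraction mapping theorem on a ball of radius $\sim(1/\lambda)^{(3-\beta_1)/2}$. One minor slip: your parenthetical about when $2^\star<3$ is backwards---since $2^\star = 2(N-1)/(N-2) < 3$ for all $N\geq 5$, the term $Z^{2^\star-3}\phi^2$ always drops here and only $|N(\phi)|\leq C|\phi|^{2^\star-1}/|y|$ is needed, exactly as in Lemma~\ref{non}.
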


Rewrite \eqref{pp} as
\begin{align}\label{repp}
 \left\{
  \begin{array}{ll}
 \ \ \ -\Delta \phi+V(r,z'')\phi-(2^{\star}-1)Q(r,z'')\frac{Z_{\bar{r},\bar{h},\bar{z}'',\lambda}^{2^{\star}-2}}{|y|}\phi\\
  =N(\phi)+E_k+\sum\limits_{l=2}^{N-m}c_l\sum\limits_{j=1}^k\bigg
  (\frac{Z_{\xi_j^+,\lambda}^{2^{\star}-2}}{|y|}Z_{j,l}^++\frac{Z_{\xi_j^-,\lambda}^{2^{\star}-2}}{|y|}Z_{j,l}^-\bigg),
  \quad  \mbox{in $\mathbb{R}^N$},\\
  \phi\in \mathbb{H},
    \end{array}
    \right.
\end{align}
where
\begin{equation*}
  N(\phi)=\frac{Q(r,z'')}{|y|}\Big((Z_{\bar{r},\bar{h},\bar{z}'',\lambda}+\phi)_+^{2^{\star}-1}-Z_{\bar{r},\bar{h},\bar{z}'',\lambda}^{2^{\star}-1}-(2^{\star}-1)Z_{\bar{r},\bar{h},\bar{z}'',\lambda}^{2^{\star}-2}\phi\Big),
\end{equation*}
and
\begin{align*}
  E_k=&\underbrace{\frac{1}{|y|}\Big[Q(r,z'')Z_{\bar{r},\bar{h},\bar{z}'',\lambda}^{2^{\star}-1}-\sum\limits_{j=1}^k\Big(\eta U_{\xi_j^+,\lambda}^{2^{\star}-1}+\eta U_{\xi_j^-,\lambda}^{2^{\star}-1}\Big)\Big]}_{:=I_1}-\underbrace{V(r,z'')Z_{\bar{r},\bar{h},\bar{z}'',\lambda}}_{:=I_2}+\underbrace{Z^*_{\bar{r},\bar{h},\bar{z}'',\lambda}\Delta \eta}_{:=I_3}+\underbrace{2\nabla\eta\cdot \nabla Z^*_{\bar{r},\bar{h},\bar{z}'',\lambda}}_{:=I_4}.
\end{align*}

In the following, we will make use of the contraction mapping theorem to prove that \eqref{repp} is uniquely solvable under the condition that $\|\phi\|_*$ is small enough, so we need to estimate $N(\phi)$ and $E_k$, respectively.

\begin{lemma}\label{non}
If $N \geq7$, then
\begin{equation*}
  \|N(\phi)\|_{**}\leq C\|\phi\|_*^{2^{\star}-1}.
\end{equation*}
\end{lemma}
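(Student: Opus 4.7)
The plan is to exploit that when $N \geq 7$, the nonlinear exponent $p := 2^{\star}-1 = \tfrac{N}{N-2}$ lies in $(1, \tfrac{7}{5}]$; in particular $1<p\leq 2$. This permits the elementary inequality
$$|(a+b)_+^{p} - a^{p} - p\, a^{p-1} b| \leq C\,|b|^{p}, \qquad a \geq 0,\ b \in \mathbb{R},$$
which, applied with $a = Z_{\bar{r},\bar{h},\bar{z}'',\lambda}(x) \geq 0$ and $b = \phi(x)$ and using that $Q$ is bounded by $(C_1)$, yields the pointwise bound
$$|N(\phi)(x)| \leq C\,\frac{|\phi(x)|^{p}}{|y|}.$$

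Next I insert the $\|\cdot\|_{*}$-bound on $\phi$. Since $\tfrac{(N-2)p}{2} = \tfrac{N}{2}$, this gives
$$|N(\phi)(x)| \leq C\|\phi\|_{*}^{p}\,\frac{\lambda^{N/2}}{|y|}\Bigg(\sum_{j=1}^{k}\sum_{\sigma=\pm}\frac{1}{(1+\lambda|y|+\lambda|z-\xi_j^{\sigma}|)^{(N-2)/2+\tau}}\Bigg)^{p}.$$
Matching this against the structure of $\|\cdot\|_{**}$ and canceling the common factor $\lambda^{N/2}/|y|$ that appears on both sides, the lemma reduces to the uniform (in $k$ and $x$) summation estimate
$$\Bigg(\sum_{j,\sigma} T_{j,\sigma}^{(N-2)/2+\tau}\Bigg)^{p}\leq C\sum_{j,\sigma} T_{j,\sigma}^{N/2+\tau},\qquad T_{j,\sigma}(x):=\frac{1}{1+\lambda|y|+\lambda|z-\xi_j^{\sigma}|}.$$

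To prove this I would localize to the cells $\Omega_j^{\pm}$ defined earlier, where a single $T_{j,\sigma}$ dominates the others. In each such cell, the cross-sum $\sum_{(l,\sigma')\neq(j,\sigma)} T_{l,\sigma'}^{(N-2)/2+\tau}$ is controlled by $C\,T_{j,\sigma}^{(N-2)/2+\tau}$ via the lower bound $|z-\xi_{l}^{\sigma'}|\gtrsim |\xi_j^{\sigma}-\xi_l^{\sigma'}|$ inside $\Omega_j^{\sigma}$; the sharp geometric spacing of the points on the two circles of radius $\bar r\sqrt{1-\bar h^{2}}$ with angular separation $2\pi/k$, combined with the range $\lambda\in [L_0 k^{(N-2)/(N-4-\alpha)}, L_1 k^{(N-2)/(N-4-\alpha)}]$, makes the resulting series convergent with a bound independent of $k$. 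Combined with the pointwise inequality $T_{j,\sigma}^{((N-2)/2+\tau)p} = T_{j,\sigma}^{N/2+p\tau}\leq T_{j,\sigma}^{N/2+\tau}$ (using $T_{j,\sigma}\leq 1$ and $p\tau>\tau$), this closes the summation step and yields the claim.

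The main obstacle is exactly this summation step: because $p>1$ one cannot naively interchange $(\sum\cdot)^{p}$ with $\sum(\cdot)^{p}$, so one must rely on the interior localization in $\Omega_j^{\pm}$ together with the quantitative separation of the concentration points. The calibrated choices $\tau=\tfrac{N-4-\alpha}{N-2-\alpha}$, $\sqrt{1-\bar h^{2}}\sim \lambda^{-\alpha/(N-2)}$, and $\lambda\sim k^{(N-2)/(N-4-\alpha)}$ from \eqref{case1} are precisely what keep the cross-series uniformly convergent in $k$, while the hypothesis $N\geq 7$ is what keeps $p\leq 2$ available for the algebraic inequality in the first step.
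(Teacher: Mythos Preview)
Your overall strategy matches the paper's: the pointwise bound $|N(\phi)|\leq C|\phi|^{2^\star-1}/|y|$ from $1<2^\star-1\leq 2$, the reduction to the summation inequality, and the localization to the cells $\Omega_j^\pm$ using the separation $|z-\xi_l^{\sigma'}|\gtrsim |\xi_j^\sigma-\xi_l^{\sigma'}|$ are all exactly what the paper does.

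There is, however, a gap in your summation step. The claim that in $\Omega_j^\sigma$ the cross-sum $\sum_{(l,\sigma')\neq(j,\sigma)} T_{l,\sigma'}^{(N-2)/2+\tau}$ is bounded by $C\,T_{j,\sigma}^{(N-2)/2+\tau}$ uniformly in $k$ is false: take $|y|$ large, so that every $T_{l,\sigma'}\sim (\lambda|y|)^{-1}$; then the cross-sum is $\sim(2k-1)T_{j,\sigma}^{(N-2)/2+\tau}$, and the constant blows up with $k$. The separation bound $|z-\xi_l^{\sigma'}|\gtrsim|\xi_j^\sigma-\xi_l^{\sigma'}|$ alone cannot give a comparison to $T_{j,\sigma}$; it only gives $T_{l,\sigma'}\lesssim(\lambda|\xi_j^\sigma-\xi_l^{\sigma'}|)^{-1}$. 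Combining it with $T_{l,\sigma'}\leq T_{j,\sigma}$ and splitting the exponent as $(N-2)/2+\tau=(N-2)/2+\tau$ yields at best $\sum T_{l,\sigma'}^{(N-2)/2+\tau}\lesssim T_{j,\sigma}^{(N-2)/2}$, which after raising to the power $p$ gives $T_{j,\sigma}^{N/2}$ --- not the required $T_{j,\sigma}^{N/2+\tau}$.

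The paper avoids this by inserting one H\"older step \emph{before} localizing: it writes
\[
\Big(\sum_{l,\sigma'} T_{l,\sigma'}^{(N-2)/2+\tau}\Big)^{2^\star-1}\leq \Big(\sum_{l,\sigma'} T_{l,\sigma'}^{N/2+\tau}\Big)\Big(\sum_{l,\sigma'} T_{l,\sigma'}^{\tau}\Big)^{2^\star-2},
\]
and then bounds only the second factor via localization, using $T_{j,\sigma}^\tau\leq 1$ for the dominant term and $T_{l,\sigma'}^\tau\leq C(\lambda|\xi_j^\sigma-\xi_l^{\sigma'}|)^{-\tau}$ for the others; the resulting series $\sum(\lambda|\xi_j^\sigma-\xi_l^{\sigma'}|)^{-\tau}$ is what the calibration $\tau=\frac{N-4-\alpha}{N-2-\alpha}$ and $\lambda\sim k^{(N-2)/(N-4-\alpha)}$ keeps uniformly bounded (Lemma~\ref{AppA5}). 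Replacing your direct dominance claim by this H\"older splitting closes the argument.
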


\begin{proof}
If $N \geq7$, we have
\begin{equation*}
  |N(\phi)|\leq C\frac{|\phi|^{2^{\star}-1}}{|y|}.
\end{equation*}
Recall the definition of $\Omega_j^+$, by symmetry, we assume that $x=(y,z)\in \Omega_1^+$. Then it follows
\begin{equation}\label{da2}
  |z-\xi_j^+|\geq C|\xi_j^+-\xi_1^+|,\quad |z-\xi_j^-|\geq C|\xi_j^--\xi_1^+|,\quad j=1,2,\cdots,k.
\end{equation}
By \eqref{da2} and the H\"{o}lder inequality
\begin{equation*}
  \sum\limits_{j=1}^ka_j b_j\leq \Big(\sum\limits_{j=1}^ka_j^p\Big)^{\frac{1}{p}} \Big(\sum\limits_{j=1}^kb_j^q\Big)^{\frac{1}{q}},\quad \frac{1}{p}+\frac{1}{q}=1, \quad a_j,b_j\geq0,
\end{equation*}
we obtain
\begin{align}\label{new1}
  |N(\phi)| \leq &C\frac{\|\phi\|_*^{{2^{\star}-1}}}{|y|}\lambda^{\frac{N}{2}}\bigg(\sum\limits_{j=1}^k\Big(\frac{1}{(1+\lambda|y|+\lambda|z-\xi_j^+|)^{\frac{N-2}{2}+\tau}}+
  \frac{1}{(1+\lambda|y|+\lambda|z-\xi_j^-|)^{\frac{N-2}{2}+\tau}}\Big)\bigg)^{2^{\star}-1} \nonumber\\
  \leq &C \frac{\|\phi\|_*^{{2^{\star}-1}}}{|y|}\lambda^{\frac{N}{2}}\bigg(\sum\limits_{j=1}^k\Big(\frac{1}{(1+\lambda|y|+\lambda|z-\xi_j^+|)^{\frac{N}{2}+\tau}}+
  \frac{1}{(1+\lambda|y|+\lambda|z-\xi_j^-|)^{\frac{N}{2}+\tau}}\Big)\bigg) \nonumber\\
  &\times \bigg(\sum\limits_{j=1}^k\Big(\frac{1}{(1+\lambda|y|+\lambda|z-\xi_j^+|)^{\tau}}+
  \frac{1}{(1+\lambda|y|+\lambda|z-\xi_j^-|)^{\tau}}\Big)\bigg)^{2^{\star}-2}\nonumber\\
  \leq &C \frac{\|\phi\|_*^{{2^{\star}-1}}}{|y|}\lambda^{\frac{N}{2}}\bigg(\sum\limits_{j=1}^k\Big(\frac{1}{(1+\lambda|y|+\lambda|z-\xi_j^+|)^{\frac{N}{2}+\tau}}+
  \frac{1}{(1+\lambda|y|+\lambda|z-\xi_j^-|)^{\frac{N}{2}+\tau}}\Big)\bigg)\nonumber\\
  &\times \bigg(1+\sum\limits_{j=2}^k\frac{1}{(\lambda|\xi_j^+-\xi_1^+|)^{\tau}}+\sum\limits_{j=1}^k\frac{1}{(\lambda|\xi_j^--\xi_1^+|)^{\tau}}\bigg)^{2^{\star}-2}\nonumber\\
  \leq &C {\|\phi\|_*^{{2^{\star}-1}}}\bigg(\sum\limits_{j=1}^k\Big(\frac{\lambda^{\frac{N+2}{2}}}{\lambda|y|(1+\lambda|y|+\lambda|z-\xi_j^+|)^{\frac{N}{2}+\tau}}+
  \frac{\lambda^{\frac{N+2}{2}}}{\lambda|y|(1+\lambda|y|+\lambda|z-\xi_j^-|)^{\frac{N}{2}+\tau}}\Big)\bigg).
\end{align}
Therefore, 
$\|N(\phi)\|_{**}\leq C\|\phi\|_*^{2^{\star}-1}$.
  \end{proof}

Next, we estimate $E_k$.

  \begin{lemma}\label{err}
If $N \geq7$, then there exists a small constant $\varepsilon>0$ such that
\begin{equation*}
  \|E_k\|_{**}\leq C\big(\frac{1}{\lambda}\big)^{\frac{3-\beta_1}{2}+\varepsilon}.
\end{equation*}
\end{lemma}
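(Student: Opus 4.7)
\textbf{Proof proposal for Lemma \ref{err}.} The plan is to bound each of the four pieces $I_1,I_2,I_3,I_4$ of $E_k$ separately in the $\|\cdot\|_{**}$ norm. Because the configuration $\{\xi_j^\pm\}$ and the target norm are invariant under the symmetries defining $H_s$, it suffices to take the supremum over one fundamental region, which we fix to be $\Omega_1^+$. On this region the bubble $U_{\xi_1^+,\lambda}$ is dominant: for every $j\neq 1$ one has $|z-\xi_j^+|\ge C|\xi_j^+-\xi_1^+|$, and for every $j$ one has $|z-\xi_j^-|\ge C|\xi_j^--\xi_1^+|$ with $|\xi_j^--\xi_1^+|\gtrsim\bar r\bar h\to\bar r$ in Case 1.

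The terms $I_3=Z^*_{\bar r,\bar h,\bar z'',\lambda}\Delta\eta$ and $I_4=2\nabla\eta\cdot\nabla Z^*_{\bar r,\bar h,\bar z'',\lambda}$ are supported in the annulus $\delta\le|(|y|,r,z'')-(0,r_0,z_0'')|\le 2\delta$, whose distance to every concentration point $\xi_j^\pm$ is bounded below by a $k$-independent constant. Hence on this support $U_{\xi_j^\pm,\lambda}$ and $|\nabla U_{\xi_j^\pm,\lambda}|$ are $O(\lambda^{-(N-2)/2})$, and summing over $2k$ bubbles and comparing with the $**$-weight gives a bound of the form $C\lambda^{-(N-2)+C}$, which is super-polynomially small and in particular much smaller than $\lambda^{-(3-\beta_1)/2-\varepsilon}$ once one recalls $\lambda\sim k^{(N-2)/(N-4-\alpha)}$. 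For $I_2=V(r,z'')Z_{\bar r,\bar h,\bar z'',\lambda}$, boundedness of $V$ and a direct comparison of $U_{\xi_1^+,\lambda}(x)\sim\lambda^{(N-2)/2}(1+\lambda|y|+\lambda|z-\xi_1^+|)^{-(N-2)}$ against the $**$-weight $\lambda^{(N+2)/2}(\lambda|y|)^{-1}(1+\lambda|y|+\lambda|z-\xi_1^+|)^{-N/2-\tau}$ yields the gain $\lambda|y|\cdot(1+\lambda|y|+\lambda|z-\xi_1^+|)^{-N/2+2+\tau}/\lambda^2$, whose supremum on $\Omega_1^+$ is of order $\lambda^{-2+\tau}$; since $\tau=\iota/(\iota+2)$ is small and $(3-\beta_1)/2\to(N-1)/(N-2)<2$ as $\iota\to 0^+$, this contribution is absorbed for $\varepsilon>0$ small.

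The main term is $I_1$. I would split it as
\[
I_1=\frac{Q(r,z'')-1}{|y|}\,Z_{\bar r,\bar h,\bar z'',\lambda}^{2^\star-1}+\frac{1}{|y|}\Bigl[Z_{\bar r,\bar h,\bar z'',\lambda}^{2^\star-1}-\sum_{j=1}^k\eta\bigl(U_{\xi_j^+,\lambda}^{2^\star-1}+U_{\xi_j^-,\lambda}^{2^\star-1}\bigr)\Bigr].
\]
For the first piece, the fact that $(r_0,z_0'')$ is a critical point of $Q$ with $Q(r_0,z_0'')=1$, combined with $(C_3)$ and the localisation $|(r,z'')-(r_0,z_0'')|=O(\lambda^{-1+\vartheta})$ guaranteed by \eqref{case1} on the relevant part of $\Omega_1^+$, gives $|Q(r,z'')-1|=O(\lambda^{-2+2\vartheta}+\lambda^{-2}(\lambda|z-\xi_1^+|)^2)$, which when multiplied by the $**$-principal part of $Z^{2^\star-1}/|y|$ produces the required decay. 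For the second piece, on $\Omega_1^+$ one has $N\ge 7\Rightarrow 1<2^\star-1<2$, so the algebraic inequality
\[
\Bigl|\bigl(\textstyle\sum_i a_i\bigr)^{2^\star-1}-\sum_i a_i^{2^\star-1}\Bigr|\le C\,a_1^{2^\star-2}\sum_{i\ne 1}a_i+C\sum_{i\ne 1}a_i^{2^\star-1}
\]
applied to $a_i=\eta U_{\xi_j^\pm,\lambda}$ (with $\eta$ replaced by $1$ near the core of $U_{\xi_1^+,\lambda}$ up to an error treated as for $I_3,I_4$) controls the discrepancy by
\[
\frac{U_{\xi_1^+,\lambda}^{2^\star-2}}{|y|}\Bigl(\sum_{j\ge 2}U_{\xi_j^+,\lambda}+\sum_j U_{\xi_j^-,\lambda}\Bigr)+\frac{1}{|y|}\sum_{(j,\pm)\ne(1,+)}U_{\xi_j^\pm,\lambda}^{2^\star-1}.
\]
Using the separation estimates $\lambda|\xi_j^+-\xi_1^+|\sim\lambda^{1-\beta_1}\sin((j-1)\pi/k)$ and $\lambda|\xi_j^--\xi_1^+|\gtrsim\lambda$ together with Lemma \ref{AppA2}, each summand is dominated by the $**$-weight of index $1$ multiplied by $(\lambda|\xi_j^\pm-\xi_1^+|)^{-(3-\beta_1)/2-\varepsilon}$; summing the resulting geometric-type series in $j$ yields $\|I_1\|_{**}\le C\lambda^{-(3-\beta_1)/2-\varepsilon}$.

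The main obstacle is exactly the bookkeeping in the last step: one must verify that, after pairing the $2^\star-2$-th power of the dominant bubble against the cross-bubble cloud, the combined decay in $\lambda$ beats the worst single bubble-bubble interaction on the top circle, and that the exponent $(3-\beta_1)/2$ is reached \emph{with room to spare}. This is where $N\ge 7$ is essential: it guarantees $2^\star-1<2$ so the simple expansion of $(\sum a_i)^{2^\star-1}$ is available, and it simultaneously forces $\iota>0$ to be chosen strictly positive in $\alpha=N-4-\iota$, which is exactly what creates the extra $\varepsilon>0$ in the exponent. If $\iota=0$ or $N\in\{5,6\}$, the same scheme gives only the critical exponent $(3-\beta_1)/2$ without any gain.
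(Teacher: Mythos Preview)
Your overall architecture---symmetry reduction to $\Omega_1^+$, separate treatment of $I_1,\dots,I_4$, and the algebraic expansion of $(\sum a_i)^{2^\star-1}$---matches the paper. Your handling of $I_2,I_3,I_4$ is correct, and your treatment of the $(Q-1)$ piece via the pointwise bound $|Q(r,z'')-1|\le C\bigl(|z-\xi_1^+|^2+\lambda^{-2+2\vartheta}\bigr)$ is in fact cleaner than the paper's two-region split, since it avoids introducing the auxiliary scale $\lambda^{-(3-\beta_1)/4-\varepsilon}$.

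There is, however, a genuine exponent error in your bound for the cross-bubble piece of $I_1$. You claim that on $\Omega_1^+$ each summand $\frac{1}{|y|}U_{\xi_1^+,\lambda}^{2^\star-2}U_{\xi_j^\pm,\lambda}$ is controlled by the $**$-weight times $(\lambda|\xi_j^\pm-\xi_1^+|)^{-(3-\beta_1)/2-\varepsilon}$, and that summing in $j$ gives $\lambda^{-(3-\beta_1)/2-\varepsilon}$. But the nearest-bubble separation is only $\lambda|\xi_2^+-\xi_1^+|\sim\lambda^{1-\beta_1}/k\sim\lambda^{2/(N-2)}$, so the sum is of order
\[
\sum_{j\ge 2}\bigl(\lambda|\xi_j^+-\xi_1^+|\bigr)^{-(3-\beta_1)/2-\varepsilon}\sim\lambda^{-\frac{2}{N-2}\bigl(\frac{3-\beta_1}{2}+\varepsilon\bigr)},
\]
which for $N\ge 5$ is strictly larger than $\lambda^{-(3-\beta_1)/2-\varepsilon}$ (for $N=7$ you get roughly $\lambda^{-0.48}$ instead of $\lambda^{-1.2}$). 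The remedy, which is what the paper does, is to pull a \emph{larger} separation exponent $\gamma\in\bigl(\tfrac{N-1}{2},\tfrac{N}{2}-\tau\bigr)$ out of $(1+\lambda|y|+\lambda|z-\xi_j^\pm|)^{-(N-2)}$, leaving exactly $(1+\lambda|y|+\lambda|z-\xi_1^+|)^{-(N/2+\tau)}$ for the weight; then the sum over $j$ contributes $\lambda^{-2\gamma/(N-2)}$, and $\gamma>(N-1)/2$ is precisely what makes $2\gamma/(N-2)>(N-1)/(N-2)\approx(3-\beta_1)/2$. This is also where the \emph{smallness} of $\iota$ (hence of $\tau$) enters: it guarantees the interval $\bigl(\tfrac{N-1}{2},\tfrac{N}{2}-\tau\bigr)$ is nonempty and allows the extra $\varepsilon$. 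Your final paragraph correctly identifies this step as the crux, but the stated exponent $(3-\beta_1)/2$ for the separation is too small to close the argument. (Also, the relevant pointwise tool is Lemma~\ref{AppA1}, not Lemma~\ref{AppA2}.)
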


\begin{proof}
By symmetry, we assume that $x=(y,z)\in \Omega_1^+$. Then
\begin{equation}\label{da1}
   |z-\xi_j^-|\geq |z-\xi_j^+| \geq |z-\xi_1^+|,\quad j=1,2,\cdots,k.
\end{equation}
For $I_1$, we have
\begin{align*}
  I_1=&\frac{1}{|y|}\bigg[Q(r,z'')\Big(\sum\limits_{j=1}^k\big(\eta U_{\xi_j^+,\lambda}+\eta U_{\xi_j^-,\lambda}\big)\Big)^{2^{\star}-1}-\sum\limits_{j=1}^k\Big(\eta U_{\xi_j^+,\lambda}^{2^{\star}-1}+\eta U_{\xi_j^-,\lambda}^{2^{\star}-1}\Big)\bigg]\\
  =&\frac{Q(r,z'')}{|y|}\bigg[\Big(\sum\limits_{j=1}^k\big(\eta U_{\xi_j^+,\lambda}+\eta U_{\xi_j^-,\lambda}\big)\Big)^{2^{\star}-1}-\sum\limits_{j=1}^k\Big(\eta U_{\xi_j^+,\lambda}^{2^{\star}-1}+\eta U_{\xi_j^-,\lambda}^{2^{\star}-1}\Big)\bigg]\\
  &+\frac{Q(r,z'')-1}{|y|}\sum\limits_{j=1}^k\Big(\eta U_{\xi_j^+,\lambda}^{2^{\star}-1}+\eta U_{\xi_j^-,\lambda}^{2^{\star}-1}\Big)\\
  :=&I_{11}+I_{12}.
\end{align*}
\begin{align*}
  |I_{11}|
  \leq &C \frac{U_{\xi_1^+,\lambda}^{2^{\star}-2}}{|y|}\Big( \sum\limits_{j=2}^kU_{\xi_j^+,\lambda}+\sum\limits_{j=1}^k U_{\xi_j^-,\lambda}\Big)+\frac{C}{|y|}\Big( \sum\limits_{j=2}^kU_{\xi_j^+,\lambda}+\sum\limits_{j=1}^k U_{\xi_j^-,\lambda}\Big)^{2^{\star}-1}\\
  \leq &C \lambda^{\frac{N}{2}}\frac{1}{|y|(1+\lambda|y|+\lambda|z-\xi_1^+|)^{2}}\Big(\sum\limits_{j=2}^k\frac{1}{(1+\lambda|y|+\lambda|z-\xi_j^+|)^{N-2}}+
  \sum\limits_{j=1}^k\frac{1}{(1+\lambda|y|+\lambda|z-\xi_j^-|)^{N-2}}\Big)\\
  &+\frac{C\lambda^{\frac{N}{2}}}{|y|}\bigg(\sum\limits_{j=2}^k\frac{1}{(1+\lambda|y|+\lambda|z-\xi_j^+|)^{N-2}}+
  \sum\limits_{j=1}^k\frac{1}{(1+\lambda|y|+\lambda|z-\xi_j^-|)^{N-2}}\bigg)^{2^{\star}-1}\\
  :=&I_{111}+I_{112}.
\end{align*}
Since $\iota>0$ is small, by \eqref{da2}, \eqref{da1}, and Lemma \ref{AppA5}, choosing $\frac{N-1}{2}<\gamma< \frac{N}{2}$, we have
\begin{align*}
  I_{111}\leq &C \lambda^{\frac{N}{2}}\frac{1}{|y|(1+\lambda|y|+\lambda|z-\xi_1^+|)^{N-\gamma}}\Big(\sum\limits_{j=2}^k\frac{1}{(1+\lambda|y|+\lambda|z-\xi_j^+|)^{\gamma}}+
  \sum\limits_{j=1}^k\frac{1}{(1+\lambda|y|+\lambda|z-\xi_j^-|)^{\gamma}}\Big)\\
  \leq &C  \lambda^{\frac{N}{2}}\frac{1}{|y|(1+\lambda|y|+\lambda|z-\xi_1^+|)^{\frac{N}{2}+\tau}}\Big(\sum\limits_{j=2}^k\frac{1}{(\lambda|\xi_j^+-\xi_1^+|)^{\gamma}}+
  \sum\limits_{j=1}^k\frac{1}{(\lambda|\xi_j^--\xi_1^+|)^{\gamma}}\Big)\\
  \leq &C \lambda^{\frac{N+2}{2}}\frac{1}{\lambda|y|(1+\lambda|y|+\lambda|z-\xi_1^+|)^{\frac{N}{2}+\tau}}\big(\frac{1}{\lambda}\big)^{\frac{2\gamma}{N-2}}\\
  \leq &C \lambda^{\frac{N+2}{2}}\frac{1}{\lambda|y|(1+\lambda|y|+\lambda|z-\xi_1^+|)^{\frac{N}{2}+\tau}}\big(\frac{1}{\lambda}\big)^{\frac{3-\beta_1}{2}+\varepsilon}.
\end{align*}
Hence,
\begin{equation}\label{err1}
  \|I_{111}\|_{**}\leq C\big(\frac{1}{\lambda}\big)^{\frac{3-\beta_1}{2}+\varepsilon}.
\end{equation}

As for $I_{112}$, by \eqref{da2}, the H\"{o}lder inequality and Lemma \ref{AppA5}, we get
\begin{align*}
  I_{112}\leq  &C\frac{\lambda^{\frac{N}{2}}}{|y|}\Big(\sum\limits_{j=2}^k\frac{1}{(1+\lambda|y|+\lambda|z-\xi_j^+|)^{\frac{N}{2}+\tau}}\Big)\bigg(\sum\limits_{j=2}^k
  \frac{1}{(1+\lambda|y|+\lambda|z-\xi_j^+|)^{\frac{N}{2}(\frac{N-2}{2}-\frac{N-2}{N}\tau)}}\bigg)^{2^{\star}-2}\\
  &+C\frac{\lambda^{\frac{N}{2}}}{|y|}\Big(\sum\limits_{j=1}^k\frac{1}{(1+\lambda|y|+\lambda|z-\xi_j^-|)^{\frac{N}{2}+\tau}}\Big)\bigg(\sum\limits_{j=1}^k
  \frac{1}{(1+\lambda|y|+\lambda|z-\xi_j^-|)^{\frac{N}{2}(\frac{N-2}{2}-\frac{N-2}{N}\tau)}}\bigg)^{2^{\star}-2}\\
  \leq  &C\frac{\lambda^{\frac{N}{2}}}{|y|}\Big(\sum\limits_{j=2}^k\frac{1}{(1+\lambda|y|+\lambda|z-\xi_j^+|)^{\frac{N}{2}+\tau}}\Big)\bigg(\sum\limits_{j=2}^k
  \frac{1}{(\lambda|\xi_j^+-\xi_1^+|)^{\frac{N}{2}(\frac{N-2}{2}-\frac{N-2}{N}\tau)}}\bigg)^{2^{\star}-2}\\
  &+C\frac{\lambda^{\frac{N}{2}}}{|y|}\Big(\sum\limits_{j=1}^k\frac{1}{(1+\lambda|y|+\lambda|z-\xi_j^-|)^{\frac{N}{2}+\tau}}\Big)\bigg(\sum\limits_{j=1}^k
  \frac{1}{(\lambda|\xi_j^--\xi_1^+|)^{\frac{N}{2}(\frac{N-2}{2}-\frac{N-2}{N}\tau)}}\bigg)^{2^{\star}-2}\\
  \leq  &C\frac{\lambda^{\frac{N}{2}}}{|y|}\Big(\sum\limits_{j=2}^k\frac{1}{(1+\lambda|y|+\lambda|z-\xi_j^+|)^{\frac{N}{2}+\tau}}+\sum\limits_{j=1}^k\frac{1}{(1+\lambda|y|+\lambda|z-\xi_j^-|)
  ^{\frac{N}{2}+\tau}}\Big)\big(\frac{1}{\lambda}\big)^{\frac{2}{N-2}(\frac{N}{2}-\tau)}\\
  \leq &C\lambda^{\frac{N+2}{2}}\Big(\sum\limits_{j=2}^k\frac{1}{\lambda|y|(1+\lambda|y|+\lambda|z-\xi_j^+|)^{\frac{N}{2}+\tau}}+\sum\limits_{j=1}^k\frac{1}{\lambda|y|(1+\lambda|y|+\lambda|z-\xi_j^-|)
  ^{\frac{N}{2}+\tau}}\Big)\big(\frac{1}{\lambda}\big)^{\frac{3-\beta_1}{2}+\varepsilon}.
\end{align*}
Thus,
\begin{equation}\label{err2'}
  \|I_{112}\|_{**}\leq C\big(\frac{1}{\lambda}\big)^{\frac{3-\beta_1}{2}+\varepsilon}.
\end{equation}

For $I_{12}$, in the region $|(r,z'')-(r_0,z_0'')|\leq (\frac{1}{\lambda})^{\frac{3-\beta_1}{4}+\varepsilon}$, using the Taylor's expansion, 
we have
\begin{align*}
  |I_{12}|=&\frac{1}{|y|}\bigg|\frac{1}{2}\frac{\partial Q^2(r_0,z_0'')}{\partial r^2}(r-r_0)^2+\sum\limits_{i=4}^{N-m}\frac{\partial Q^2(r_0,z_0'')}{\partial r\partial z_i}(r-r_0)(z_i-z_{0i})\\&+\frac{1}{2}\sum\limits_{i,l=4}^{N-m}\frac{\partial Q^2(r_0,z_0'')}{\partial z_i\partial z_l}(z_i-z_{0i})(z_l-z_{0l})+o\big(|(r,z'')-(r_0,z_0'')|^2\big)\bigg|\sum\limits_{j=1}^k\Big(\eta U_{\xi_j^+,\lambda}^{2^{\star}-1}+\eta U_{\xi_j^-,\lambda}^{2^{\star}-1}\Big)\\
  \leq & C\big(\frac{1}{\lambda}\big)^{\frac{3-\beta_1}{2}+\varepsilon}\lambda^{\frac{N+2}{2}}\sum\limits_{j=1}^k\Big(\frac{1}{\lambda|y|(1+\lambda|y|+\lambda|z-
  \xi_j^+|)^{N}}+
  \frac{1}{\lambda|y|(1+\lambda|y|+\lambda|z-\xi_j^-|)^{N}}\Big)\\
  \leq & C\big(\frac{1}{\lambda}\big)^{\frac{3-\beta_1}{2}+\varepsilon}\lambda^{\frac{N+2}{2}}\sum\limits_{j=1}^k\Big(\frac{1}{\lambda|y|(1+\lambda|y|+\lambda|z-\xi_j^+|)^{\frac{N}{2}+\tau}}+
  \frac{1}{\lambda|y|(1+\lambda|y|+\lambda|z-\xi_j^-|)^{\frac{N}{2}+\tau}}\Big).
\end{align*}
On the other hand, $(\frac{1}{\lambda})^{\frac{3-\beta_1}{4}+\varepsilon} \leq |(r,z'')-(r_0,z_0'')|\leq 2\delta$,
\begin{equation*}
  |(r,z'')-(\bar{r},\bar{z}'')|\geq |(r,z'')-(r_0,z_0'')|-|(r_0,z_0'')-(\bar{r},\bar{z}'')|\geq (\frac{1}{\lambda})^{\frac{3-\beta_1}{4}+\varepsilon}-\frac{1}{\lambda^{1-\vartheta}}\geq\frac{1}{2}(\frac{1}{\lambda})^{\frac{3-\beta_1}{4}+\varepsilon},
\end{equation*}
which leads to
\begin{equation*}
  \frac{1}{1+\lambda|y|+\lambda |z-\xi_j^\pm|}\leq C(\frac{1}{\lambda})^{\frac{1+\beta_1}{4}-\varepsilon},
\end{equation*}
then
\begin{align*}
  |I_{12}|\leq & C\big(\frac{1}{\lambda}\big)^{\frac{3-\beta_1}{2}+\varepsilon}\lambda^{\frac{N+2}{2}}\sum\limits_{j=1}^k\Big(\frac{1}{\lambda|y|(1+\lambda|y|+\lambda|z-\xi_j^+|)^{\frac{N}{2}+\tau}}\frac{\lambda^{{\frac{3-\beta_1}{2}+\varepsilon}}}{(1+\lambda|y|+\lambda|z-\xi_j^+|)^{\frac{N}{2}-\tau}}\\
  &\qquad\qquad\qquad\quad\ \ \ \ \ +
  \frac{1}{\lambda|y|(1+\lambda|y|+\lambda|z-\xi_j^-|)^{\frac{N}{2}+\tau}}\frac{\lambda^{{\frac{3-\beta_1}{2}+\varepsilon}}}{(1+\lambda|y|+\lambda|z-\xi_j^-|)^{\frac{N}{2}-\tau}}\Big)\\
  \leq  & C\big(\frac{1}{\lambda}\big)^{\frac{3-\beta_1}{2}+\varepsilon}\lambda^{\frac{N+2}{2}}\lambda^{{\frac{3-\beta_1}{2}+\varepsilon}}(\frac{1}{\lambda})^{(\frac{N}{2}-\tau)(\frac{1+\beta_1}{4}-\varepsilon)}
  \\
  &\times\sum\limits_{j=1}^k\Big(\frac{1}{\lambda|y|(1+\lambda|y|+\lambda|z-\xi_j^+|)^{\frac{N}{2}+\tau}}
  +
  \frac{1}{\lambda|y|(1+\lambda|y|+\lambda|z-\xi_j^-|)^{\frac{N}{2}+\tau}}\Big)\\
  \leq & C\big(\frac{1}{\lambda}\big)^{\frac{3-\beta_1}{2}+\varepsilon}\lambda^{\frac{N+2}{2}}\sum\limits_{j=1}^k\Big(\frac{1}{\lambda|y|(1+\lambda|y|+\lambda|z-\xi_j^+|)^{\frac{N}{2}+\tau}}+
  \frac{1}{\lambda|y|(1+\lambda|y|+\lambda|z-\xi_j^-|)^{\frac{N}{2}+\tau}}\Big),
\end{align*}
where we used the fact that $(\frac{N}{2}-\tau)(\frac{1+\beta_1}{4}-\varepsilon)\geq {\frac{3-\beta_1}{2}+\varepsilon}$ if $\varepsilon>0$ small enough since $N\geq7$ and $\iota$ is small. Therefore, we obtain
 \begin{equation}\label{err2}
  \|I_{12}\|_{**}\leq C\big(\frac{1}{\lambda}\big)^{\frac{3-\beta_1}{2}+\varepsilon}.
  \end{equation}

For $I_2$, we have
\begin{align*}
  I_2\leq &C \lambda^{\frac{N-2}{2}} \sum\limits_{j=1}^k\Big(\frac{\eta}{(1+\lambda|y|+\lambda|z-\xi_j^+|)^{N-2}}+
  \frac{\eta}{(1+\lambda|y|+\lambda|z-\xi_j^-|)^{N-2}}\Big) \nonumber\\
  \leq &C \big(\frac{1}{\lambda}\big)^{\frac{3-\beta_1}{2}+\varepsilon}\lambda^{\frac{N+2}{2}} \sum\limits_{j=1}^k\Big(\frac{\eta}{\lambda^{\frac{1+\beta_1}{2}-\varepsilon}(1+\lambda|y|+\lambda|z-\xi_j^+|)^{N-2}}+
  \frac{\eta}{\lambda^{\frac{1+\beta_1}{2}-\varepsilon}(1+\lambda|y|+\lambda|z-\xi_j^-|)^{N-2}}\Big)\nonumber\\
  \leq &C \big(\frac{1}{\lambda}\big)^{\frac{3-\beta_1}{2}+\varepsilon}\lambda^{\frac{N+2}{2}} \sum\limits_{j=1}^k\Big(\frac{1}{\lambda|y|(1+\lambda|y|+\lambda|z-\xi_j^+|)^{\frac{N}{2}+\tau}}+
  \frac{1}{\lambda|y|(1+\lambda|y|+\lambda|z-\xi_j^-|)^{\frac{N}{2}+\tau}}\Big),
\end{align*}
where we used the fact that for any $|(|y|,r,z'')-(0,r_0,z_0'')|\leq 2\delta$,
\begin{equation*}
  \frac{1}{\lambda}\leq \frac{C}{1+\lambda|y|+\lambda|z-\xi_j^\pm|},
\end{equation*}
and
$ \frac{-1+\beta_1}{2}-\varepsilon \geq \frac{N}{2}+\tau-(N-2)$ if $\varepsilon>0$ small enough since $\iota$ is small.
Therefore, we have
\begin{equation}\label{err3}
  \|I_{2}\|_{**}\leq C\big(\frac{1}{\lambda}\big)^{\frac{3-\beta_1}{2}+\varepsilon}.
\end{equation}

Similarly, we can prove that
\begin{equation}\label{err4}
  \|I_{3}\|_{**}\leq C\big(\frac{1}{\lambda}\big)^{\frac{3-\beta_1}{2}+\varepsilon}.
\end{equation}

Moreover, for any $\delta\leq |(|y|,r,z'')-(0,r_0,z_0'')|\leq 2\delta$, there holds
\begin{equation*}
   \frac{1}{1+\lambda|y|+\lambda|z-\xi_j^\pm|}\leq \frac{C}{\lambda}.
\end{equation*}
This together with $N-1-(\frac{N}{2}+\tau)\geq \frac{3-\beta_1}{2}+\varepsilon$ leads to
\begin{align*}
  |I_4|\leq &C \lambda^{\frac{N}{2}} \sum\limits_{j=1}^k\Big(\frac{|\nabla\eta|}{(1+\lambda|y|+\lambda|z-\xi_j^+|)^{N-1}}+
  \frac{|\nabla \eta|}{(1+\lambda|y|+\lambda|z-\xi_j^-|)^{N-1}}\Big) \\
  \leq &C \big(\frac{1}{\lambda}\big)^{\frac{3-\beta_1}{2}+\varepsilon}\lambda^{\frac{N+2}{2}} \sum\limits_{j=1}^k\Big(\frac{|\nabla\eta|}{\lambda^{\frac{-1+\beta_1}{2}-\varepsilon}(1+\lambda|y|+\lambda|z-\xi_j^+|)^{N-1}}+
  \frac{|\nabla\eta|}{\lambda^{\frac{-1+\beta_1}{2}-\varepsilon}(1+\lambda|y|+\lambda|z-\xi_j^-|)^{N-1}}\Big)\\
  \leq &C \big(\frac{1}{\lambda}\big)^{\frac{3-\beta_1}{2}+\varepsilon}\lambda^{\frac{N+2}{2}} \sum\limits_{j=1}^k\Big(\frac{1}{\lambda|y|(1+\lambda|y|+\lambda|z-\xi_j^+|)^{\frac{N}{2}+\tau}}+
  \frac{1}{\lambda|y|(1+\lambda|y|+\lambda|z-\xi_j^-|)^{\frac{N}{2}+\tau}}\Big).
\end{align*}
As a result, we obtain
\begin{equation}\label{err5}
  \|I_{4}\|_{**}\leq C\big(\frac{1}{\lambda}\big)^{\frac{3-\beta_1}{2}+\varepsilon}.
\end{equation}
Combining \eqref{err1}-\eqref{err5}, we derive the conclusion.
\end{proof}

Now we are ready to prove Proposition \ref{fixed}.

\vspace{.3cm}
\noindent{\bf Proof of Proposition \ref{fixed}.} Denote
\begin{equation*}
  \mathbb{E}=\Big\{\phi:\phi\in C(\mathbb{R}^N)\cap \mathbb{H},\quad \|\phi\|_*\leq C\big(\frac{1}{\lambda}\big)^{\frac{3-\beta_1}{2}}\Big\}.
\end{equation*}
By Lemma \ref{dc},  \eqref{repp} is equivalent to find a fixed point for the equation
\begin{equation}\label{fixp}
  \phi=T(\phi):=L_k(N(\phi)+E_k).
\end{equation}
Hence, it is sufficient to prove that $T$ is a contraction map from $\mathbb{E}$ to $\mathbb{E}$. In fact, for any $\phi\in \mathbb{E}$, by Lemmas \ref{dc}, \ref{non} and \ref{err}, we have
\begin{align*}
  \|T(\phi)\|_*\leq& C\|L_k(N(\phi))\|_{*}+\|L_k(E_k)\|_{*}\leq C\|N(\phi)\|_{**}+C\|E_k\|_{**}\\
  \leq& C\|\phi\|_*^{2^{\star}-1}+C\big(\frac{1}{\lambda}\big)^{\frac{3-\beta_1}{2}+\varepsilon}\leq C\big(\frac{1}{\lambda}\big)^{\frac{3-\beta_1}{2}}.
\end{align*}
This shows that $T$ maps from $\mathbb{E}$ to $\mathbb{E}$.

On the other hand, for any $\phi_1,\phi_2\in \mathbb{E}$, we have
\begin{align*}
  \|T(\phi_1)-T(\phi_2)\|_*\leq C\|L_k(N(\phi_1))-L_k(N(\phi_2))\|_{*}\leq C\|N(\phi_1)-N(\phi_2)\|_{**}.
\end{align*}
If $N \geq7$, we have
\begin{equation*}
  |N'(\phi)|\leq C\frac{|\phi|^{2^{\star}-2}}{|y|}.
\end{equation*}
By \eqref{new1}, we obtain
\begin{align*}
  |N(\phi_1)-N(\phi_2)|\leq &C|N'(\phi_1+\kappa(\phi_2-\phi_1))||\phi_1-\phi_2|\leq C\Big(\frac{|\phi_1|^{2^{\star}-2}}{|y|}+\frac{|\phi_2|^{2^{\star}-2}}{|y|}\Big)|\phi_1-\phi_2|\\
   \leq& C\big(\|\phi_1\|_*^{2^{\star}-2}+\|\phi_2\|_*^{2^{\star}-2}\big)\|\phi_1-\phi_2\|_*\lambda^{\frac{N}{2}}\\
   &\times \frac{1}{|y|}\bigg(\sum\limits_{j=1}^k\Big(\frac{1}{(1+\lambda|y|+\lambda|z-\xi_j^+|)^{\frac{N-2}{2}+\tau}}+
  \frac{1}{(1+\lambda|y|+\lambda|z-\xi_j^-|)^{\frac{N-2}{2}+\tau}}\Big)\bigg)^{2^{\star}-1}\\
  \leq &C \big(\|\phi_1\|_*^{2^{\star}-2}+\|\phi_2\|_*^{2^{\star}-2}\big)\|\phi_1-\phi_2\|_*\lambda^{\frac{N+2}{2}}\\
  &\times\sum\limits_{j=1}^k\Big(\frac{1}{\lambda|y|(1+\lambda |y|+\lambda|z-\xi_j^+|)^{\frac{N}{2}+\tau}}+
  \frac{1}{\lambda|y|(1+\lambda|y|+\lambda|z-\xi_j^-|)^{\frac{N}{2}+\tau}}\Big),
\end{align*}
that is
\begin{align*}
  \|T(\phi_1)-T(\phi_2)\|_*\leq C \big(\|\phi_1\|_*^{2^{\star}-2}+\|\phi_2\|_*^{2^{\star}-2}\big)\|\phi_1-\phi_2\|_*<\frac{1}{2}\|\phi_1-\phi_2\|_*.
\end{align*}
Therefore, $T$ is a contraction map from $\mathbb{E}$ to $\mathbb{E}$. 

By the contraction mapping theorem, there exists a unique $\phi=\phi_{\bar{r},\bar{h},\bar{z}'',\lambda}$ such that \eqref{fixp} holds. Moreover, by Lemmas  \ref{dc}, \ref{non} and \ref{err}, we deduce
\begin{align*}
  \|\phi\|_*\leq C\|L_k(N(\phi))\|_{*}+\|L_k(E_k)\|_{*}\leq C\|N(\phi)\|_{**}+C\|E_k\|_{**}\leq C\big(\frac{1}{\lambda}\big)^{\frac{3-\beta_1}{2}+\varepsilon},
\end{align*}
and
\begin{equation*}
  |c_l|\leq  \frac{C}{\lambda^{\eta_l}}(\|N(\phi)\|_{**}+\|E_k\|_{**})\leq C\big(\frac{1}{\lambda}\big)^{\frac{3-\beta_1}{2}+\eta_l+\varepsilon},
\end{equation*}
for $l=2,3,\cdots,N-m$. This completes the proof. \qed

\section{Proof of Theorem \ref{th1}}\label{three}
Recall that the functional corresponding to \eqref{pro} is
\begin{align*}
 I(u)=\frac{1}{2}\int_{\mathbb{R}^N}\big(|\nabla u|^2+V(r,z'')u^2\big)dx-\frac{1}{2^{\star}}\int_{\mathbb{R}^N}Q(r,z'')\frac{(u)_+^{2^{\star}}(x)}{|y|}dx.
  \end{align*}

Let
$\phi=\phi_{\bar{r},\bar{h},\bar{z}'',\lambda}$ be the function obtained in Proposition \ref{fixed} and $u_k=Z_{\bar{r},\bar{h},\bar{z}'',\lambda}+\phi$. In this section, we will choose suitable $(\bar{r},\bar{h},\bar{z}'',\lambda)$ such that $u_k$ is a solution of problem \eqref{pro}. For this purpose, we need the following result.
\begin{proposition}
Assume that $(\bar{r},\bar{h},\bar{z}'',\lambda)$ satisfies
\begin{equation}\label{con1}
  \int_{D_\varrho}\bigg(-\Delta u_k+V(r,z'')u_k-Q(r,z'')\frac{(u_k)_+^{2^{\star}-1}(x)}{|y|}\bigg)\langle x, \nabla u_k\rangle dx=0,
\end{equation}
\begin{equation}\label{con2}
  \int_{D_\varrho}\bigg(-\Delta u_k+V(r,z'')u_k-Q(r,z'')\frac{(u_k)_+^{2^{\star}-1}(x)}{|y|}\bigg)\frac{\partial u_k}{\partial z_i} dx=0,\quad i=4,5,\cdots,N-m,
\end{equation}
and
\begin{equation}\label{con3}
  \int_{\mathbb{R}^N}\bigg(-\Delta u_k+V(r,z'')u_k-Q(r,z'')\frac{(u_k)_+^{2^{\star}-1}(x)}{|y|}\bigg)\frac{\partial Z_{\bar{r},\bar{h},\bar{z}'',\lambda}}{\partial \lambda} dx=0,
\end{equation}
where $u_k=Z_{\bar{r},\bar{h},\bar{z}'',\lambda}+\phi$ and $D_\varrho=\big\{x:x=(y,z',z''),|(|y|,|z'|,z'')-(0,r_0,z_0'')|\leq \varrho\big\}$ with $\varrho\in (2\delta,5\delta)$, then $c_l=0$ for $l=2,3,\cdots,N-m$.
\end{proposition}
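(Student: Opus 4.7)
The plan is to substitute the equation satisfied by $u_k$ into the three Pohozaev-type conditions, thereby reducing them to a homogeneous linear system $\Lambda c=0$ in the Lagrange multipliers $c_l$, and then to prove that the $(N-m-1)\times(N-m-1)$ matrix $\Lambda$ is invertible for $k$ large.

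By Proposition \ref{fixed}, $u_k = Z_{\bar r,\bar h,\bar z'',\lambda}+\phi$ satisfies
$-\Delta u_k + V(r,z'')u_k - Q(r,z'')(u_k)_+^{2^{\star}-1}/|y| = \sum_{l=2}^{N-m} c_l\,\Phi_l$,
where $\Phi_l := \sum_{j=1}^k \bigl( Z_{\xi_j^+,\lambda}^{2^{\star}-2}Z_{j,l}^+/|y| + Z_{\xi_j^-,\lambda}^{2^{\star}-2}Z_{j,l}^-/|y| \bigr)$. Since each $\Phi_l$ is supported in $\{\eta\neq 0\}\subset D_\varrho$ (as $\varrho>2\delta$), inserting this identity into \eqref{con1}--\eqref{con3} and enlarging the integration domain from $D_\varrho$ to $\mathbb{R}^N$ yields the homogeneous linear system $\sum_{l=2}^{N-m}c_l\Lambda_{tl}=0$ indexed by $t\in\{1\}\cup\{4,\cdots,N-m\}\cup\{*\}$, with
$$\Lambda_{1l}=\int_{\mathbb{R}^N}\Phi_l\,\langle x,\nabla u_k\rangle\,dx,\quad \Lambda_{il}=\int_{\mathbb{R}^N}\Phi_l\,\frac{\partial u_k}{\partial z_i}\,dx,\quad \Lambda_{*l}=\int_{\mathbb{R}^N}\Phi_l\,\frac{\partial Z_{\bar r,\bar h,\bar z'',\lambda}}{\partial \lambda}\,dx.$$

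To show $\det\Lambda\neq 0$, I would first replace $u_k$ by $Z_{\bar r,\bar h,\bar z'',\lambda}$ in each test function; the $\phi$-contribution is controlled by $\|\phi\|_* = O(\lambda^{-(3-\beta_1)/2-\varepsilon})$ from Proposition \ref{fixed} together with Lemma \ref{AppA2}, and is strictly subdominant. Applying the scaling identity $\langle x,\nabla U_{\xi,\lambda}\rangle = \lambda\partial_\lambda U_{\xi,\lambda} - \frac{N-2}{2}U_{\xi,\lambda} + \xi\cdot\nabla_z U_{\xi,\lambda}$ at each bubble, together with the decomposition $\xi_j^\pm = \bar r\,\partial_{\bar r}\xi_j^\pm + (0,\bar z'')$, expresses the dilation test function as an explicit linear combination of $\partial_\lambda Z$, $\partial_{\bar r}Z$ and $\partial_{\bar z_i''}Z$ (modulo a $-\frac{N-2}{2}Z$ piece that is negligible by the scale-invariance of $\int U_{\xi,\lambda}^{2^{\star}}/|y|\,dx$). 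Pairing with each $\Phi_l$ and using the orthogonality \eqref{xiangu2} together with the cross-bubble decay from Lemma \ref{AppA5}, the leading matrix exhibits the following structure: row $*$ has a unique non-zero entry, in column $l=2$ of order $k\lambda^{-2\beta_1}$; each row $i\in\{4,\cdots,N-m\}$ has a unique non-zero entry, in column $l=i$ of order $k\lambda^{2}$; and row $1$ has entries of orders $k\lambda^{1-2\beta_1}$, $k\bar r\lambda^{2}$, $k\bar z_l''\lambda^{2}$ in columns $2$, $3$, $l\geq 4$ respectively. Expanding the determinant along the columns $l\geq 4$ reduces it to the $2\times 2$ minor formed by rows $1,*$ and columns $2,3$, whose determinant is $\sim k^{2}\bar r\,\lambda^{2-2\beta_1}\neq 0$ because $\bar r$ is close to $r_0>0$. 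Hence $\det\Lambda\neq 0$ for $k$ large, forcing $c_l=0$ for every $l=2,\cdots,N-m$.

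The main technical obstacle is to make the matrix-entry estimates rigorous: producing sharp bounds for a large family of integrals involving cut-off bubble modes centered at the distinct points $\xi_j^\pm$ and verifying quantitatively that every cross-bubble and mismatched-mode contribution is strictly subdominant to the leading diagonal pairings identified above. A secondary subtlety is to control the $-\frac{N-2}{2}Z$ term arising from the dilation identity; after pairing with each $\Phi_l$ this term turns out to be negligible thanks to the aforementioned scale-invariance, but this must be verified explicitly for every $l$.
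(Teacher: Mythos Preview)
Your approach is correct and is essentially the same as the paper's: both substitute the equation for $u_k$ into \eqref{con1}--\eqref{con3} to obtain a homogeneous linear system $\sum_l c_l\Lambda_{tl}=0$, then show the coefficient matrix is nonsingular by identifying the same leading-order pairings you list (the paper records them as \eqref{pr1}--\eqref{pr3}). The only difference is organizational. The paper does not compute $\det\Lambda$ directly; instead it performs a back-substitution: from the rows corresponding to \eqref{con1} and \eqref{con2} it deduces $c_l=o(|c_2|/\lambda^{1+\beta_1})$ for $l\ge 3$, and then the row corresponding to \eqref{con3} forces $c_2=0$. Your determinant expansion along the rows $i\ge 4$ and then the row $*$ is exactly this elimination rewritten. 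Also, where you invoke the scaling identity $\langle x,\nabla U_{\xi,\lambda}\rangle=\lambda\partial_\lambda U_{\xi,\lambda}-\tfrac{N-2}{2}U_{\xi,\lambda}+\xi\cdot\nabla_z U_{\xi,\lambda}$ together with $\xi_j^\pm=\bar r\,\partial_{\bar r}\xi_j^\pm+(0,\bar z'')$, the paper uses the equivalent splitting $\langle x,\nabla Z\rangle=\langle y,\nabla_y Z\rangle+\langle z',\nabla_{z'}Z\rangle+\langle z'',\nabla_{z''}Z\rangle$; both decompositions isolate the $\partial_{\bar r}Z$ contribution needed for the column $l=3$. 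One small point to make explicit in your write-up: the row-$1$ entries in columns $l\ge 4$ are not $o(1)$ but of the same order $k\lambda^2$ as the diagonal (the paper records these as the constants $b_l$ in \eqref{ob1}); your determinant argument still goes through because these entries are eliminated by the rows $i\ge 4$, but this should be stated rather than hidden under ``unique non-zero entry''.
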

\begin{proof}
Since $Z_{\bar{r},\bar{h},\bar{z}'',\lambda}=0$ in $\mathbb{R}^N\backslash D_\varrho$, we see that if \eqref{con1}-\eqref{con3} hold, then
\begin{equation}\label{con'}
  \sum\limits_{l=2}^{N-m}c_l\sum\limits_{j=1}^k\int_{\mathbb{R}^N}\bigg
  (\frac{Z_{\xi_j^+,\lambda}^{2^{\star}-2}(x)}{|y|}Z_{j,l}^+(x)+\frac{Z_{\xi_j^-,\lambda}^{2^{\star}-2}(x)}{|y|}Z_{j,l}^-(x)\bigg)v(x)dx=0,
\end{equation}
for $v=\langle x, \nabla u_k\rangle$, $v=\frac{\partial u_k}{\partial z_i}$ ($i=4,5,\cdots,N-m$), and $v=\frac{\partial Z_{\bar{r},\bar{h},\bar{z}'',\lambda}}{\partial \lambda}$.

By direct computations, we can prove that
\begin{equation}\label{pr1}
  \sum\limits_{j=1}^k\int_{\mathbb{R}^N}\bigg
  (\frac{Z_{\xi_j^+,\lambda}^{2^{\star}-2}(x)}{|y|}Z_{j,3}^+(x)+\frac{Z_{\xi_j^-,\lambda}^{2^{\star}-2}(x)}{|y|}Z_{j,3}^-(x)\bigg)\langle z', \nabla_{z'} Z_{\bar{r},\bar{h},\bar{z}'',\lambda}\rangle dx=2k\lambda^2 (a_1+o(1)),
\end{equation}
\begin{equation}\label{pr2}
  \sum\limits_{j=1}^k\int_{\mathbb{R}^N}\bigg
  (\frac{Z_{\xi_j^+,\lambda}^{2^{\star}-2}(x)}{|y|}Z_{j,i}^+(x)+\frac{Z_{\xi_j^-,\lambda}^{2^{\star}-2}(x)}{|y|}Z_{j,i}^-(x)\bigg)\frac{\partial Z_{\bar{r},\bar{h},\bar{z}'',\lambda}}{\partial z_i}(x)dx=2k \lambda^2(a_2+o(1)),
\end{equation}
for $i=4,5,\cdots,N-m$, and
\begin{equation}\label{pr3}
  \sum\limits_{j=1}^k\int_{\mathbb{R}^N}\bigg
  (\frac{Z_{\xi_j^+,\lambda}^{2^{\star}-2}(x)}{|y|}Z_{j,2}^+(x)+\frac{Z_{\xi_j^-,\lambda}^{2^{\star}-2}(x)}{|y|}Z_{j,2}^-(x)\bigg)\frac{\partial Z_{\bar{r},\bar{h},\bar{z}'',\lambda}}{\partial \lambda}(x)dx=\frac{2k }{\lambda^{2\beta_1}} (a_3+o(1)),
\end{equation}
for some constants $a_1\neq0$, $a_2\neq0$, and $a_3>0$.

Integrating by parts, we get
\begin{equation*}
  \sum\limits_{l=2}^{N-m}c_l\sum\limits_{j=1}^k\int_{\mathbb{R}^N}\bigg
  (\frac{Z_{\xi_j^+,\lambda}^{2^{\star}-2}(x)}{|y|}Z_{j,l}^+(x)+\frac{Z_{\xi_j^-,\lambda}^{2^{\star}-2}(x)}{|y|}Z_{j,l}^-(x)\bigg)v(x)dx=o(k\lambda^{1-\beta_1}|c_2|)+o(k\lambda^2)\sum\limits_{l=3}^{N-m}|c_l|,
\end{equation*}
for $v=\langle x, \nabla \phi_{\bar{r},\bar{h},\bar{z}'',\lambda}\rangle$ and $v=\frac{\partial \phi_{\bar{r},\bar{h},\bar{z}'',\lambda}}{\partial z_i}$ ($i=4,5,\cdots,N-m$).
It follows from \eqref{con'} that
\begin{equation}\label{con''}
  \sum\limits_{l=2}^{N-m}c_l\sum\limits_{j=1}^k\int_{\mathbb{R}^N}\bigg
  (\frac{Z_{\xi_j^+,\lambda}^{2^{\star}-2}(x)}{|y|}Z_{j,l}^+(x)+\frac{Z_{\xi_j^-,\lambda}^{2^{\star}-2}(x)}{|y|}Z_{j,l}^-(x)\bigg)v(x)dx=o(k\lambda^{1-\beta_1}|c_2|)+o(k\lambda^2)\sum\limits_{l=3}^{N-m}|c_l|,
\end{equation}
also holds for $v=\langle x, \nabla Z_{\bar{r},\bar{h},\bar{z}'',\lambda}\rangle$ and $v=\frac{\partial Z_{\bar{r},\bar{h},\bar{z}'',\lambda}}{\partial z_i}$ ($i=4,5,\cdots,N-m$).

Since
\begin{equation*}
  \langle x, \nabla Z_{\bar{r},\bar{h},\bar{z}'',\lambda}\rangle=\langle y, \nabla _{y}Z_{\bar{r},\bar{h},\bar{z}'',\lambda}\rangle+\langle z', \nabla _{z'}Z_{\bar{r},\bar{h},\bar{z}'',\lambda}\rangle+\langle z'', \nabla _{z''}Z_{\bar{r},\bar{h},\bar{z}'',\lambda}\rangle,
\end{equation*}
we obtain
\begin{align}\label{ob1}
  &\sum\limits_{l=2}^{N-m}c_l\sum\limits_{j=1}^k\int_{\mathbb{R}^N}\bigg
  (\frac{Z_{\xi_j^+,\lambda}^{2^{\star}-2}(x)}{|y|}Z_{j,l}^+(x)+\frac{Z_{\xi_j^-,\lambda}^{2^{\star}-2}(x)}{|y|}Z_{j,l}^-(x)\bigg)\langle x, \nabla Z_{\bar{r},\bar{h},\bar{z}'',\lambda}\rangle dx \nonumber\\
  =&c_3\sum\limits_{j=1}^k\int_{\mathbb{R}^N}\bigg
  (\frac{Z_{\xi_j^+,\lambda}^{2^{\star}-2}(x)}{|y|}Z_{j,3}^+(x)+\frac{Z_{\xi_j^-,\lambda}^{2^{\star}-2}(x)}{|y|}Z_{j,3}^-(x)\bigg)\langle z', \nabla_{z'} Z_{\bar{r},\bar{h},\bar{z}'',\lambda}\rangle dx+o(k\lambda^2|c_3|) \nonumber \\
  &+o(k\lambda^{1-\beta_1}|c_2|)+\sum\limits_{l=4}^{N-m}c_l(b_l+o(1))k\lambda^2, \quad b_l\in \mathbb{R},
\end{align}
and
\begin{align}\label{ob2}
  &\sum\limits_{l=2}^{N-m}c_l\sum\limits_{j=1}^k\int_{\mathbb{R}^N}\bigg
  (\frac{Z_{\xi_j^+,\lambda}^{2^{\star}-2}(x)}{|y|}Z_{j,l}^+(x)+\frac{Z_{\xi_j^-,\lambda}^{2^{\star}-2}(x)}{|y|}Z_{j,l}^-(x)\bigg)\frac{\partial Z_{\bar{r},\bar{h},\bar{z}'',\lambda}}{\partial z_i}(x) dx \nonumber\\
  =&c_i\sum\limits_{j=1}^k\int_{\mathbb{R}^N}\bigg
  (\frac{Z_{\xi_j^+,\lambda}^{2^{\star}-2}(x)}{|y|}Z_{j,i}^+(x)+\frac{Z_{\xi_j^-,\lambda}^{2^{\star}-2}(x)}{|y|}Z_{j,i}^-(x)\bigg)\frac{\partial Z_{\bar{r},\bar{h},\bar{z}'',\lambda}}{\partial z_i} (x)dx \nonumber\\&+o(k\lambda^{1-\beta_1}|c_2|)+o(k\lambda^2)\sum\limits_{l\neq2,i}|c_l|,\quad i=4,5,\cdots,N-m.
\end{align}

Combining \eqref{con''}, \eqref{ob1} and \eqref{ob2}, we are led to
\begin{align*}
  &c_3\sum\limits_{j=1}^k\int_{\mathbb{R}^N}\bigg
  (\frac{Z_{\xi_j^+,\lambda}^{2^{\star}-2}(x)}{|y|}Z_{j,3}^+(x)+\frac{Z_{\xi_j^-,\lambda}^{2^{\star}-2}(x)}{|y|}Z_{j,3}^-(x)\bigg)\langle z', \nabla_{z'} Z_{\bar{r},\bar{h},\bar{z}'',\lambda}\rangle dx\\
  =&o(k\lambda^{1-\beta_1}|c_2|)+o(k\lambda^2|c_3|)
  +\sum\limits_{l=4}^{N-m}c_l(b_l+o(1))k\lambda^2,
\end{align*}
and for $ i=4,5,\cdots,N-m$,
\begin{align*}
  c_i\sum\limits_{j=1}^k\int_{\mathbb{R}^N}\bigg
  (\frac{Z_{\xi_j^+,\lambda}^{2^{\star}-2}(x)}{|y|}Z_{j,i}^+(x)+\frac{Z_{\xi_j^-,\lambda}^{2^{\star}-2}(x)}{|y|}Z_{j,i}^-(x)\bigg)\frac{\partial Z_{\bar{r},\bar{h},\bar{z}'',\lambda}}{\partial z_i}(x) dx=o(k\lambda^{1-\beta_1}|c_2|)+o(k\lambda^2)\sum\limits_{l\neq2,i}|c_l|,
\end{align*}
which together with \eqref{pr1} and \eqref{pr2} yields
\begin{equation*}
  c_3(a_1+o(1))=o\Big(\frac{|c_2|}{\lambda^{1+\beta_1}}\Big)+\sum\limits_{l=4}^{N-m}c_l(b_l+o(1)),
\end{equation*}
and
\begin{equation*}
  c_i(a_2+o(1))=o\Big(\frac{|c_2|}{\lambda^{1+\beta_1}}\Big)+o\Big(\sum\limits_{l\neq2,i}|c_l|\Big),\quad i=4,5,\cdots,N-m.
\end{equation*}
So we have
\begin{equation*}
  c_l=o\Big(\frac{|c_2|}{\lambda^{1+\beta_1}}\Big),\quad l=3,4,\cdots,N-m.
\end{equation*}

On the other hand, it follows from \eqref{con'} and \eqref{pr3} that
\begin{align*}
  0=&\sum\limits_{l=2}^{N-m}c_l\sum\limits_{j=1}^k\int_{\mathbb{R}^N}\bigg
  (\frac{Z_{\xi_j^+,\lambda}^{2^{\star}-2}(x)}{|y|}Z_{j,l}^+(x)+\frac{Z_{\xi_j^-,\lambda}^{2^{\star}-2}(x)}{|y|}Z_{j,l}^-(x)\bigg)\frac{\partial Z_{\bar{r},\bar{h},\bar{z}'',\lambda}}{\partial \lambda}(x)dx\\
  =&c_2\sum\limits_{j=1}^k\int_{\mathbb{R}^N}\bigg
  (\frac{Z_{\xi_j^+,\lambda}^{2^{\star}-2}}{|y|}Z_{j,2}^+(x)+\frac{Z_{\xi_j^-,\lambda}^{2^{\star}-2}}{|y|}Z_{j,2}^-(x)\bigg)\frac{\partial Z_{\bar{r},\bar{h},\bar{z}'',\lambda}}{\partial \lambda}(x)dx+o\Big(\frac{k|c_2|}{\lambda^{2\beta_1}}\Big)\\
  =&\frac{2k }{\lambda^{2\beta_1}} (a_3+o(1))c_2,
\end{align*}
which implies that $c_2=0$. The proof is complete.
\end{proof}

\begin{lemma}\label{ener1}
We have
\begin{align*}
  &\int_{\mathbb{R}^N}\bigg(-\Delta u_k+V(r,z'')u_k-Q(r,z'')\frac{(u_k)_+^{2^{\star}-1}(x)}{|y|}\bigg)\frac{\partial Z_{\bar{r},\bar{h},\bar{z}'',\lambda}}{\partial \lambda} dx\\
  =&2k\bigg(-\frac{B_1}{\lambda^{3}}+\sum\limits_{j=2}^k\frac{B_2}{\lambda^{N-1}|\xi_j^+-\xi_1^+|^{N-2}}+
  \sum\limits_{j=1}^k\frac{B_2}{\lambda^{N-1}|\xi_j^--\xi_1^+|^{N-2}}+O\Big(\frac{1}{\lambda^{3+\varepsilon}}\Big)\bigg)\\
  =&2k\bigg(-\frac{B_1}{\lambda^{3}}+\frac{B_3k^{N-2}}{\lambda^{N-1}(\sqrt{1-\bar{h}^2})^{N-2}}+
  \frac{B_4 k}{\lambda^{N-1}\bar{h}^{N-3}\sqrt{1-\bar{h}^2}}+O\Big(\frac{1}{\lambda^{3+\varepsilon}}\Big)\bigg),
\end{align*}
where $B_1$, $B_2$ are given in Lemma \ref{ener2}, and $B_3$, $B_4$ are two positive constants.
\end{lemma}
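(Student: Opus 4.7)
The plan is to split $u_k = Z_{\bar{r},\bar{h},\bar{z}'',\lambda} + \phi$ and compute the integral by separating the main piece depending only on $Z := Z_{\bar{r},\bar{h},\bar{z}'',\lambda}$ from the $\phi$-correction. For the main piece, integration by parts gives
\[
\int_{\mathbb{R}^N}\Big(-\Delta Z + VZ - Q\frac{Z^{2^{\star}-1}}{|y|}\Big)\frac{\partial Z}{\partial \lambda}\, dx = \frac{\partial I(Z)}{\partial \lambda},
\]
which, by the energy expansion in Lemma \ref{ener2}, admits a $\lambda$-dependence of the schematic form
\[
\mathrm{const} + \frac{\tilde{B}_1 V(r_0,z_0'')}{\lambda^2}\int_{\mathbb{R}^N} U_{0,1}^2\, dx - \frac{B_2}{\lambda^{N-2}}\sum_{j=2}^k\frac{1}{|\xi_j^+-\xi_1^+|^{N-2}} - \frac{B_2}{\lambda^{N-2}}\sum_{j=1}^k\frac{1}{|\xi_j^--\xi_1^+|^{N-2}} + O(\lambda^{-2-\varepsilon}).
\]
Differentiating in $\lambda$ yields the $-B_1/\lambda^3$ contribution, the two interaction sums divided by $\lambda^{N-1}$, and an error $O(\lambda^{-3-\varepsilon})$, which is exactly the first line of the asserted formula.

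For the explicit second form the geometric sums must be evaluated. Using
\[
|\xi_j^+-\xi_1^+| = 2\bar{r}\sqrt{1-\bar{h}^2}\,\Big|\sin\tfrac{(j-1)\pi}{k}\Big|, \quad |\xi_j^--\xi_1^+|^2 = 4\bar{r}^2\Big[(1-\bar{h}^2)\sin^2\tfrac{(j-1)\pi}{k}+\bar{h}^2\Big],
\]
the first sum is dominated by $j$ near $1$ and $k$, yielding $\sim k^{N-2}/(\sqrt{1-\bar{h}^2})^{N-2}$ after summing $\sum_{j\geq 1}j^{-(N-2)}$; the second is approximated by a Riemann-type integral that evaluates to $\sim k/(\bar{h}^{N-3}\sqrt{1-\bar{h}^2})$ uniformly across Cases 1--3, because the range where $\sin((j-1)\pi/k)$ is smaller than $\bar{h}$ contributes $\sim \bar{h}^{-(N-2)}$ terms while the remainder provides the integrable tail. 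Remainders from these approximations are absorbed into $O(\lambda^{-3-\varepsilon})$.

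It remains to control the $\phi$-contribution. Writing $(u_k)_+^{2^{\star}-1} = Z^{2^{\star}-1} + (2^{\star}-1)Z^{2^{\star}-2}\phi + N(\phi)$, the correction equals
\[
\int L\phi \cdot \partial_\lambda Z\, dx - \int \frac{QN(\phi)}{|y|}\partial_\lambda Z\, dx,
\]
where $L$ is the linearized operator of \eqref{lp}. By self-adjointness, $\int L\phi \cdot \partial_\lambda Z = \int \phi \cdot L\partial_\lambda Z$, and since each $\partial_\lambda U_{\xi_j^\pm}$ lies in the kernel of the single-bubble linearization, $L\partial_\lambda Z$ collapses to a $V\partial_\lambda Z$ piece, cross-bubble interaction terms, and cut-off errors from $\eta$. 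Combining $\|\phi\|_*\leq C\lambda^{-(3-\beta_1)/2-\varepsilon}$ (Proposition \ref{fixed}), the decay $|\partial_\lambda Z|\lesssim \lambda^{-\beta_1}Z$ (Lemma \ref{AppA4}), the weighted convolution estimates in Lemmas \ref{AppA2}--\ref{AppA5}, and $\|N(\phi)\|_{**}\lesssim \|\phi\|_*^{2^{\star}-1}$ (Lemma \ref{non}), each contribution is of size $O(k\lambda^{-3-\varepsilon})$ and falls into the declared error term. The main obstacle is the $V\partial_\lambda Z$-pairing against $\phi$: the factor $\lambda^{-\beta_1}$ only yields a marginal gain over the borderline case, so one must carefully translate the pairing into the $\|\cdot\|_*/\|\cdot\|_{**}$ framework and exploit the cluster structure of the $2k$ bubbles, as in the proofs of Lemmas \ref{xian}--\ref{err}, to close the estimate.
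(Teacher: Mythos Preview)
Your proposal is essentially correct and follows the same decomposition as the paper: main piece $\langle I'(Z_{\bar r,\bar h,\bar z'',\lambda}),\partial_\lambda Z_{\bar r,\bar h,\bar z'',\lambda}\rangle$ handled by Lemma~\ref{ener2}, linear $\phi$-correction, nonlinear $N(\phi)$-correction, and then Lemma~\ref{AppA6} for the geometric sums. Two small remarks. First, Lemma~\ref{ener2} already computes $\partial_\lambda I(Z)$ directly (not $I(Z)$), so no further differentiation is needed; your ``differentiating in $\lambda$'' language is slightly off, though harmless. Second, for the linear $\phi$-term the paper does not re-run the self-adjointness plus kernel argument you outline: it simply quotes the estimates \eqref{toener1} and \eqref{toener2} already established inside Lemma~\ref{xian} (with $t=2$, i.e.\ $Z_{1,2}^+=\partial_\lambda Z_{\xi_1^+,\lambda}$), which give the bound $O(\lambda^{-\beta_1}\|\phi\|_*/\lambda^{1+\varepsilon})$ in one line. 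Your route via $L\partial_\lambda Z$ reaches the same place but duplicates work. The paper bounds the nonlinear piece pointwise by $C|y|^{-1}Z^{2^\star-3}\phi^2|\partial_\lambda Z|$ and integrates directly, rather than passing through $\|N(\phi)\|_{**}$; again both are fine.
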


\begin{proof}
By symmetry, we have
\begin{align*}
  &\int_{\mathbb{R}^N}\bigg(-\Delta u_k+V(r,z'')u_k-Q(r,z'')\frac{(u_k)_+^{2^{\star}-1}(x)}{|y|}\bigg)\frac{\partial Z_{\bar{r},\bar{h},\bar{z}'',\lambda}}{\partial \lambda} dx\\
  =&\Big\langle I'(Z_{\bar{r},\bar{h},\bar{z}'',\lambda}),\frac{\partial Z_{\bar{r},\bar{h},\bar{z}'',\lambda}}{\partial \lambda}\Big\rangle+2k
  \Big\langle -\Delta \phi+V(r,z'')\phi-(2^{\star}-1)Q(r,z'')\frac{Z_{\bar{r},\bar{h},\bar{z}'',\lambda}^{2^{\star}-2}}{|y|}\phi,\frac{\partial Z_{\xi_1^+,\lambda}}{\partial \lambda}\Big\rangle\\
  &-\int_{\mathbb{R}^N}\frac{Q(r,z'')}{|y|}\Big((Z_{\bar{r},\bar{h},\bar{z}'',\lambda}+\phi)_+^{2^{\star}-1}-Z_{\bar{r},\bar{h},\bar{z}'',\lambda}^{2^{\star}-1}-(2^{\star}-1)Z_{\bar{r},\bar{h},\bar{z}'',\lambda}^{2^{\star}-2}\phi\Big)(x)\frac{\partial Z_{\bar{r},\bar{h},\bar{z}'',\lambda}}{\partial \lambda} (x) dx\\
  :=&\Big\langle I'(Z_{\bar{r},\bar{h},\bar{z}'',\lambda}),\frac{\partial Z_{\bar{r},\bar{h},\bar{z}'',\lambda}}{\partial \lambda}\Big\rangle+2kI_1-I_2.
\end{align*}
By \eqref{toener1} and \eqref{toener2}, we have
\begin{equation*}
  |I_1|=O\Big(\frac{\|\phi\|_*}{\lambda^{2+\varepsilon}}\Big)=O\Big(\frac{1}{\lambda^{3+\varepsilon}}\Big).
\end{equation*}
Moreover, we have
\begin{align*}
  |I_2|\leq & C\int_{\mathbb{R}^N}\frac{Z_{\bar{r},\bar{h},\bar{z}'',\lambda}^{2^{\star}-3}(x)}{|y|}\phi^2(x)\Big|\frac{\partial Z_{\bar{r},\bar{h},\bar{z}'',\lambda}}{\partial \lambda}(x)\Big|  dx\leq \frac{C}{\lambda^{\beta_1}}\int_{\mathbb{R}^N}\frac{Z_{\bar{r},\bar{h},\bar{z}'',\lambda}^{2^{\star}-2}(x)}{|y|}\phi^2(x) dx\nonumber\\
  \leq &C \frac{\lambda^{N-1}\|\phi\|_*^2}{\lambda^{\beta_1}}\int_{\mathbb{R}^N} \frac{1}{|y|} \bigg(\sum\limits_{j=1}^k\Big(\frac{1}{(1+\lambda|y|+\lambda|z-\xi_j^+|)^{{N-2}}}+\frac{1}{(1+\lambda|y|+\lambda|z-\xi_j^-|)^{{N-2}}}\Big)\bigg)^{2^{\star}-2} \nonumber\\&\times
  \bigg(\sum\limits_{j=1}^k\Big(\frac{1}{(1+\lambda|y|+\lambda|z-\xi_j^+|)^{\frac{N-2}{2}+\tau}}+\frac{1}{(1+\lambda|y|+\lambda|z-\xi_j^-|)^{\frac{N-2}{2}+\tau}}\Big)\bigg)^{2}dx \nonumber\\
  \leq & C\frac{\lambda^{N}\|\phi\|_*^2}{\lambda^{\beta_1}}\int_{\mathbb{R}^N} \frac{1}{\lambda|y|} \sum\limits_{j=1}^k\Big(\frac{1}{(1+\lambda|y|+\lambda|z-\xi_j^+|)^{{2}}}+\frac{1}{(1+\lambda|y|+\lambda|z-\xi_j^-|)^{{2}}}\Big) \nonumber\\&\times
  \sum\limits_{j=1}^k\Big(\frac{1}{(1+\lambda|y|+\lambda|z-\xi_j^+|)^{{N-2}+2\tau}}+\frac{1}{(1+\lambda|y|+\lambda|z-\xi_j^-|)^{{N-2}+2\tau}}\Big)dx 
\nonumber\\ \leq & C\frac{k\|\phi\|_*^2}{\lambda^{\beta_1}}=O\Big(\frac{k}{\lambda^{3+\varepsilon}}\Big).
\end{align*}
Combining Lemmas \ref{AppA6} and \ref{ener2}, we finish the proof.
\end{proof}

Integrating by parts, we obtain
\begin{equation}\label{con11}
  \int_{D_\varrho}(-\Delta u_k)\langle x, \nabla u_k\rangle dx=\frac{2-N}{2}\int_{D_\varrho}|\nabla u_k|^2dx-\frac{1}{2}\int_{\partial D_\varrho}|\nabla u_k|^2 x\cdot \nu d\sigma,
\end{equation}
\begin{align}\label{con12}
   \int_{D_\varrho}V(x)u_k\langle x, \nabla u_k\rangle dx= &\frac{1}{2}\int_{\partial D_\varrho}\varrho V(x)u_k^2d\sigma-\frac{1}{2} \int_{D_\varrho}\langle x, \nabla V(x)\rangle u_k^2dx - \frac{N}{2}\int_{D_\varrho}V(x)u_k^2 dx,
\end{align}
and
\begin{align}\label{con13}
   &\int_{D_\varrho}Q(x)\frac{(u_k)^{2^{\star}-1}_+(x)}{|y|}\langle x, \nabla u_k\rangle dx \nonumber\\
   =& \frac{1}{2^{\star}}\int_{\partial D_\varrho}Q(x)\frac{\varrho  (u_k)_+^{2^{\star}}}{|y|}d\sigma -\frac{1}{2^{\star}}\int_{ D_\varrho}\langle x,\nabla Q(x) \rangle \frac{(u_k)_+^{2^{\star}}(x)}{|y|} dx- \frac{N-2}{2}\int_{D_\varrho}Q(x)\frac{(u_k)_+^{2^{\star}}(x)}{|y|} dx,
\end{align}
where $\nu=(\nu_1,\nu_2,\cdots,\nu_N)$ denotes the outward unit normal vector of $\partial D_\varrho$.
Combining \eqref{con11}, \eqref{con12} and \eqref{con13}, we know that \eqref{con1} is equivalent to
\begin{align}\label{trans1}
  &\frac{2-N}{2}\int_{D_\varrho}|\nabla u_k|^2dx-\frac{1}{2} \int_{D_\varrho}\langle x, \nabla V(x)\rangle u_k^2dx- \frac{N}{2}\int_{D_\varrho}V(x)u_k^2 dx \nonumber\\
  &+\frac{1}{2^{\star}}\int_{ D_\varrho}\langle x,\nabla Q(x) \rangle \frac{(u_k)_+^{2^{\star}}(x)}{|y|} dx+\frac{N-2}{2}\int_{D_\varrho}Q(x)\frac{(u_k)_+^{2^{\star}}(x)}{|y|} dx \nonumber\\
  =&O\bigg(\int_{\partial D_\varrho}\Big(|\nabla \phi|^{2}+\phi^2+\frac{|\phi|^{2^{\star}}}{|y|}\Big)d\sigma\bigg),
\end{align}
since $u_k=\phi$ on $\partial D_\varrho$.

Similarly, for $i=4,5,\cdots,N-m$, we have
\begin{equation}\label{con21}
  \int_{D_\varrho}(-\Delta u_k)\frac{\partial u_k}{\partial z_i} dx=-\int_{\partial D_\varrho}\frac{\partial u_k}{\partial z_i}\nabla u_k\cdot \nu d\sigma,
\end{equation}
\begin{equation}\label{con22}
   \int_{D_\varrho}V(r,z'')u_k\frac{\partial u_k}{\partial z_i} dx= \frac{1}{2}\int_{\partial D_\varrho} V(r,z'')u_k^2 \nu_i d\sigma-\frac{1}{2} \int_{D_\varrho}\frac{\partial V(r,z'')}{\partial z_i}u_k^2dx,
\end{equation}
and
\begin{align}\label{con23}
   &\int_{D_\varrho}Q(r,z'')\frac{(u_k)^{2^{\star}-1}_+(x)}{|y|}\frac{\partial u_k}{\partial z_i}(x) dx \nonumber\\
   =& \frac{1}{2^{\star}}\int_{\partial D_\varrho} Q(r,z'')\frac{(u_k)_+^{2^{\star}}}{|y|}\nu_id\sigma-\frac{1}{2^{\star}} \int_{D_\varrho}\frac{\partial Q(r,z'')}{\partial z_i}\frac{(u_k)_+^{2^{\star}}(x)}{|y|}dx.
\end{align}
Combining \eqref{con21}, \eqref{con22} and \eqref{con23}, we know that \eqref{con2} is equivalent to
\begin{equation}\label{trans2}
  \int_{D_\varrho}\frac{\partial V(r,z'')}{\partial z_i}u_k^2dx-\frac{2}{2^{\star}} \int_{D_\varrho}\frac{\partial Q(r,z'')}{\partial z_i}\frac{(u_k)_+^{2^{\star}}(x)}{|y|}dx=O\bigg(\int_{\partial D_\varrho}\Big(|\nabla \phi|^{2}+\phi^2+\frac{|\phi|^{2^{\star}}}{|y|}\Big)d\sigma\bigg),
\end{equation}
for $i=4,5,\cdots,N-m$.

Multiplying \eqref{pp} by $u_k$ and integrating in $D_\varrho$, we obtain
\begin{align*}
  &\int_{D_\varrho}(-\Delta u_k)u_kdx+\int_{D_\varrho}V(x)u_k^2 dx
  \\=&\int_{D_\varrho}Q(x)\frac{(u_k)_+^{2^{\star}}(x)}{|y|} dx+\sum\limits_{l=2}^{N-m}c_l\sum\limits_{j=1}^k\int_{D_\varrho}\bigg
  (\frac{Z_{\xi_j^+,\lambda}^{2^{\star}-2}(x)}{|y|}Z_{j,l}^+(x)+\frac{Z_{\xi_j^-,\lambda}^{2^{\star}-2}(x)}{|y|}Z_{j,l}^-(x)\bigg)Z_{\bar{r},\bar{h},\bar{z}'',\lambda}(x)dx.
\end{align*}
Thus, \eqref{trans1} can be reduced to
\begin{align}\label{trans1'}
  &\int_{D_\varrho}V(x)u_k^2 dx+\frac{1}{2} \int_{D_\varrho}\langle x, \nabla V(x)\rangle u_k^2dx-\frac{1}{2^{\star}} \int_{D_\varrho}\langle x, \nabla Q(x)\rangle \frac{(u_k)_+^{2^{\star}}(x)}{|y|}dx\nonumber\\
  =&\frac{2-N}{2}\sum\limits_{l=2}^{N-m}c_l\sum\limits_{j=1}^k\int_{D_\varrho}\bigg
  (\frac{Z_{\xi_j^+,\lambda}^{2^{\star}-2}(x)}{|y|}Z_{j,l}^+(x)+\frac{Z_{\xi_j^-,\lambda}^{2^{\star}-2}(x)}{|y|}Z_{j,l}^-(x)\bigg)Z_{\bar{r},\bar{h},\bar{z}'',\lambda}(x)dx \nonumber\\
  &+O\bigg(\int_{\partial D_\varrho}\Big(|\nabla \phi|^{2}+\phi^2+\frac{|\phi|^{2^{\star}}}{|y|}\Big)d\sigma\bigg).
\end{align}
Using \eqref{trans2}, we can rewrite \eqref{trans1'} as
\begin{align}\label{trans1''}
  &\int_{D_\varrho}V(x)u_k^2 dx+\frac{1}{2} \int_{D_\varrho}r \frac{\partial V(r,z'')}{\partial r} u_k^2dx-\frac{1}{2^{\star}} \int_{D_\varrho}r \frac{\partial Q(r,z'')}{\partial r} \frac{(u_k)_+^{2^{\star}}(x)}{|y|}dx\nonumber\\
  =&\frac{2-N}{2}\sum\limits_{l=2}^{N-m}c_l\sum\limits_{j=1}^k\int_{D_\varrho}\bigg
  (\frac{Z_{\xi_j^+,\lambda}^{2^{\star}-2}(x)}{|y|}Z_{j,l}^+(x)+\frac{Z_{\xi_j^-,\lambda}^{2^{\star}-2}(x)}{|y|}Z_{j,l}^-(x)\bigg)Z_{\bar{r},\bar{h},\bar{z}'',\lambda}(x)dx \nonumber\\
  &+O\bigg(\int_{\partial D_\varrho}\Big(|\nabla \phi|^{2}+\phi^2+\frac{|\phi|^{2^{\star}}}{|y|}\Big)d\sigma\bigg).
\end{align}
A direct computation gives
\begin{equation*}
  \sum\limits_{j=1}^k\int_{D_\varrho}\bigg
  (\frac{Z_{\xi_j^+,\lambda}^{2^{\star}-2}(x)}{|y|}Z_{j,l}^+(x)+\frac{Z_{\xi_j^-,\lambda}^{2^{\star}-2}(x)}{|y|}Z_{j,l}^-(x)\bigg)Z_{\bar{r},\bar{h},\bar{z}'',\lambda}(x)dx =O\Big(\frac{k\lambda^{\eta_l}}{\lambda^{2}}\Big),
\end{equation*}
this with Proposition \ref{fixed} yields
\begin{equation*}
  \sum\limits_{l=2}^{N-m}c_l\sum\limits_{j=1}^k\int_{D_\varrho}\bigg
  (\frac{Z_{\xi_j^+,\lambda}^{2^{\star}-2}(x)}{|y|}Z_{j,l}^+(x)+\frac{Z_{\xi_j^-,\lambda}^{2^{\star}-2}(x)}{|y|}Z_{j,l}^-(x)\bigg)Z_{\bar{r},\bar{h},\bar{z}'',\lambda}(x)dx =O\Big(\frac{k}{\lambda^{3+\frac{1-\beta_1}{2}+\varepsilon}}\Big)=
  o\Big(\frac{k}{\lambda^{2}}\Big).
\end{equation*}
Therefore, \eqref{trans1''} is equivalent to
\begin{align}\label{trans1'''}
  &\int_{D_\varrho}\frac{1}{2r} \frac{\partial\big(r^{2} V(r,z'')\big)}{\partial r} u_k^2dx-\frac{1}{2^{\star}} \int_{D_\varrho}r \frac{\partial Q(r,z'')}{\partial r} \frac{(u_k)_+^{2^{\star}}(x)}{|y|}dx \nonumber
  \\=& o\Big(\frac{k}{\lambda^{2}}\Big)
  +O\bigg(\int_{\partial D_\varrho}\Big(|\nabla \phi|^{2}+\phi^2+\frac{|\phi|^{2^{\star}}}{|y|}\Big)d\sigma\bigg).
\end{align}

First, we estimate \eqref{trans2} and \eqref{trans1'''} from the right hand, and it is sufficient to estimate
\begin{equation*}
  \int_{D_{4\delta}\backslash D_{3\delta}}\Big(|\nabla \phi|^{2}+\phi^2+\frac{|\phi|^{2^{\star}}(x)}{|y|}\Big)dx.
\end{equation*}
We first prove
\begin{lemma}\label{fi}
It holds
\begin{equation*}
  \int_{\mathbb{R}^N}\big(|\nabla \phi|^2+V(r,z'')\phi^2\big)dx=O\Big(\frac{k}{\lambda^{3-\beta_1+\varepsilon}}\Big).
\end{equation*}
\end{lemma}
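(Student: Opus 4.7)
\textbf{Proof proposal for Lemma \ref{fi}.} The natural plan is the energy method: test the linearized equation \eqref{repp} against $\phi$ itself and exploit the fact that $\phi \in \mathbb{H}$. Since $\int_{\mathbb{R}^N}(Z_{\xi_j^\pm,\lambda}^{2^\star-2}/|y|)Z_{j,l}^\pm \phi\,dx = 0$ for all admissible $j,l$, the multiplier terms $\sum_l c_l \sum_j(\cdots)$ drop out, and integration by parts gives
\begin{equation*}
\int_{\mathbb{R}^N}\!\bigl(|\nabla\phi|^2 + V(r,z'')\phi^2\bigr)dx = (2^\star-1)\!\int_{\mathbb{R}^N}\!Q(r,z'')\frac{Z_{\bar r,\bar h,\bar z'',\lambda}^{2^\star-2}}{|y|}\phi^2\,dx + \int_{\mathbb{R}^N}\!N(\phi)\phi\,dx + \int_{\mathbb{R}^N}\!E_k\phi\,dx.
\end{equation*}
It then suffices to show each of the three terms on the right is $O(k/\lambda^{3-\beta_1+\varepsilon})$, using the a priori bounds $\|\phi\|_* \leq C\lambda^{-(3-\beta_1)/2-\varepsilon}$ and $\|E_k\|_{**},\ \|N(\phi)\|_{**} \leq C\lambda^{-(3-\beta_1)/2-\varepsilon}$ from Proposition \ref{fixed} and Lemmas \ref{non}, \ref{err}.

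For the first term, I would split $\mathbb{R}^N = \bigcup_{j=1}^k(\Omega_j^+\cup\Omega_j^-)$. On $\Omega_j^+$ the sum $Z_{\bar r,\bar h,\bar z'',\lambda}$ is dominated by $U_{\xi_j^+,\lambda}$, and since $(N-2)(2^\star-2)=2$ we have $U_{\xi_j^+,\lambda}^{2^\star-2}\leq C\lambda/(1+\lambda|y|+\lambda|z-\xi_j^+|)^2$. Combining with the pointwise bound
\[
|\phi(x)|^2 \leq \|\phi\|_*^2\,\lambda^{N-2}\Bigl(\tfrac{1}{(1+\lambda|y|+\lambda|z-\xi_j^+|)^{(N-2)/2+\tau}} + \text{off-diagonal}\Bigr)^2,
\]
scaling $\tilde x = \lambda(x-(0,\xi_j^+))$ reduces each diagonal contribution to the finite integral $\int |\tilde y|^{-1}(1+|\tilde y|+|\tilde z|)^{-N-2\tau}d\tilde x$, giving $O(\|\phi\|_*^2)$ per cell. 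Cross-terms between bubbles are controlled by Lemmas \ref{AppA2} and \ref{AppA5}, which supply the decay $1/(\lambda|\xi_j-\xi_{j'}|)^{\min(2\tau,\ldots)}$ and sum to something absorbable. Summed over the $2k$ cells, this yields $\int QZ^{2^\star-2}/|y|\,\phi^2 \leq Ck\|\phi\|_*^2 = O(k/\lambda^{3-\beta_1+2\varepsilon})$.

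For the nonlinear term, since $N\geq 7$ implies $2^\star-1\leq 2$, Lemma \ref{non}'s pointwise estimate $|N(\phi)|\leq C|\phi|^{2^\star-1}/|y|$ gives $|\int N(\phi)\phi| \leq C\int |\phi|^{2^\star}/|y|$. The same scaling argument as above shows $\int|\phi|^{2^\star}/|y|\leq Ck\|\phi\|_*^{2^\star}$, and since $(3-\beta_1)\,2^\star/2 > 3-\beta_1$, this is a higher-order term. For the error term, I would use the dual pairing between $\|\cdot\|_*$ and $\|\cdot\|_{**}$: writing
\[
|E_k\phi| \leq \|E_k\|_{**}\|\phi\|_*\,\lambda^N\!\sum_{j,j'}\!\frac{1}{\lambda|y|(1+\lambda|y|+\lambda|z-\xi_j^\pm|)^{N/2+\tau}(1+\lambda|y|+\lambda|z-\xi_{j'}^\pm|)^{(N-2)/2+\tau}},
\]
the diagonal integral scales to a finite constant, and the off-diagonal ones decay by Lemma \ref{AppA2}, yielding $\int E_k\phi = O(k\|E_k\|_{**}\|\phi\|_*) = O(k/\lambda^{3-\beta_1+2\varepsilon})$.

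The main obstacle I expect is keeping the cross/off-diagonal terms under control when bounding $\int Z^{2^\star-2}/|y|\,\phi^2$, because the factor $(\sum_j\cdot)^2$ generates $O(k^2)$ pairs; one has to verify that the geometric decay in $\lambda|\xi_j-\xi_{j'}|$ is fast enough (via Lemmas \ref{AppA2} and \ref{AppA5}) so that $\sum_{j'\neq j}1/(\lambda|\xi_j-\xi_{j'}|)^{\kappa}$ remains bounded by a constant, preserving the linear-in-$k$ count rather than quadratic. Once this is done, all three terms are $O(k/\lambda^{3-\beta_1+2\varepsilon})$, which is stronger than the claimed $O(k/\lambda^{3-\beta_1+\varepsilon})$.
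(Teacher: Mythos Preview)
Your proposal is correct and follows essentially the same route as the paper: test the equation against $\phi$, use the orthogonality $\phi\in\mathbb{H}$ to kill the multiplier terms, and bound the three resulting pieces by $Ck\|\phi\|_*^2$, $Ck\|\phi\|_*^{2^\star}$, and $Ck\|E_k\|_{**}\|\phi\|_*$ respectively. The only organizational difference is that the paper, rather than invoking the packaged bound $\|E_k\|_{**}\le C\lambda^{-(3-\beta_1)/2-\varepsilon}$ for the third piece, splits $\int E_k\phi$ back into the components $\int\frac{Q-1}{|y|}Z^{2^\star-1}\phi$, $\int VZ\phi$, and $\int(\frac{1}{|y|}Z^{2^\star-1}+\Delta Z)\phi$ and re-estimates each, redoing part of the work already contained in Lemma~\ref{err}; your direct dual-pairing argument is the cleaner shortcut and gives the same (in fact $O(k/\lambda^{3-\beta_1+2\varepsilon})$) outcome. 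One small citation slip: the cross-term decay $1/(\lambda|\xi_j-\xi_{j'}|)^\kappa$ you need for the off-diagonal control comes from Lemma~\ref{AppA1}, not Lemma~\ref{AppA2}; Lemma~\ref{AppA5} is then the right tool to sum these, as you say.
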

\begin{proof}
Multiplying \eqref{pp} by $\phi$ and integrating in $\mathbb{R}^N$, we have
\begin{align*}
  &\int_{\mathbb{R}^N}\big((-\Delta \phi) \phi+V(r,z'')\phi^2\big)dx\\=&\int_{\mathbb{R}^N}\bigg(Q(r,z'')\frac{(Z_{\bar{r},\bar{h},\bar{z}'',\lambda}+\phi)_+^{2^{\star}-1}}{|y|}-V(r,z'')Z_{\bar{r},\bar{h},\bar{z}'',\lambda}
  +\Delta Z_{\bar{r},\bar{h},\bar{z}'',\lambda}\bigg)(x)\phi(x) dx\\
  =&\int_{\mathbb{R}^N}\frac{Q(r,z'')}{|y|}\Big((Z_{\bar{r},\bar{h},\bar{z}'',\lambda}+\phi)_+^{2^{\star}-1}-Z_{\bar{r},\bar{h},\bar{z}'',\lambda}^{2^{\star}-1}
  \Big)(x)\phi (x)dx+\int_{\mathbb{R}^N}\frac{Q(r,z'')-1}{|y|}Z_{\bar{r},\bar{h},\bar{z}'',\lambda}^{2^{\star}-1}
  (x)\phi (x)dx
  \\
  &-\int_{\mathbb{R}^N}V(r,z'')Z_{\bar{r},\bar{h},\bar{z}'',\lambda}
  \phi dx+\int_{\mathbb{R}^N}\bigg(\frac{Z_{\bar{r},\bar{h},\bar{z}'',\lambda}^{2^{\star}-1}}{|y|}+\Delta Z_{\bar{r},\bar{h},\bar{z}'',\lambda}\bigg)(x)\phi(x) dx\\
  :=&I_1+I_2-I_3+I_4.
\end{align*}
By \eqref{new1}, we have
\begin{align*}
  |I_1|\leq &C\int_{\mathbb{R}^N}\frac{1}{|y|}\Big(Z_{\bar{r},\bar{h},\bar{z}'',\lambda}^{2^{\star}-2}(x)\phi^2(x) +|\phi|^{2^{\star}}(x)\Big)dx\\
  \leq & C\lambda^{N-1}(\|\phi\|_*^2+\|\phi\|_*^{2^{\star}})\\
  & \times\int_{\mathbb{R}^N}\frac{1}{|y|}
  \bigg(\sum\limits_{j=1}^k\Big(\frac{1}{(1+\lambda|y|+\lambda|z-\xi_j^+|)^{\frac{N-2}{2}+\tau}}+\frac{1}{(1+\lambda|y|+\lambda|z-\xi_j^-|)^{\frac{N-2}{2}+\tau}}\Big)\bigg)^{2^{\star}}dx\\
  \leq & C\lambda^N(\|\phi\|_*^2+\|\phi\|_*^{2^{\star}})\int_{\mathbb{R}^N}\sum\limits_{j=1}^k\Big(\frac{1}{\lambda|y|(1+\lambda|y|+\lambda|z-\xi_j^+|)^{\frac{N}{2}+\tau}}+
  \frac{1}{\lambda|y|(1+\lambda|y|+\lambda|z-\xi_j^-|)^{\frac{N}{2}+\tau}}\Big)\\
  & \times \sum\limits_{j=1}^k\Big(\frac{1}{(1+\lambda|y|+\lambda|z-\xi_j^+|)^{\frac{N-2}{2}+\tau}}+
  \frac{1}{(1+\lambda|y|+\lambda|z-\xi_j^-|)^{\frac{N-2}{2}+\tau}}\Big)dx\\
  \leq & Ck\|\phi\|_*^2=O\Big(\frac{k}{\lambda^{3-\beta_1+\varepsilon}}\Big).
\end{align*}

Let
\begin{equation*}
  \mathcal{D}_1:=\Big\{x:x=(y,z',z'')\in\mathbb{R}^m\times \mathbb{R}^3\times\mathbb{R}^{N-m-3},|(r,z'')-(r_0,z_0'')|\leq \lambda^{-\frac{1}{2}+\varepsilon}\Big\},
\end{equation*}
and
\begin{equation*}
  \mathcal{D}_2:=\Big\{x:x=(y,z',z'')\in\mathbb{R}^m\times \mathbb{R}^3\times\mathbb{R}^{N-m-3},|(|z'+(\xi_1^+)'|,z'')-(r_0,z_0'')|\leq \lambda^{-\frac{1}{2}+\varepsilon}\Big\}.
\end{equation*}
For $I_2$, by symmetry, using Lemma \ref{AppA1} and the Taylor's expansion, we have
\begin{align*}
  |I_2|\leq &C\|\phi\|_*\lambda^{N-1}\int_{\mathbb{R}^N}\frac{\big|Q(r,z'')-1\big|}{|y|}\\
  &\times\bigg(\sum\limits_{j=1}^k\Big(\frac{1}{(1+\lambda|y|+\lambda|z-\xi_j^+|)^{{N-2}}}+\frac{1}{(1+\lambda|y|+\lambda|z-\xi_j^-|)^{{N-2}}}\Big)\bigg)^{2^{\star}-1}\\
  &\times \sum\limits_{j=1}^k\Big(\frac{1}{(1+\lambda|y|+\lambda|z-\xi_j^+|)^{\frac{N-2}{2}+\tau}}+\frac{1}{(1+\lambda|y|+\lambda|z-\xi_j^-|)^{\frac{N-2}{2}+\tau}}\Big)dx\\
  \leq
  &C\|\phi\|_*\lambda^{N}\int_{\mathbb{R}^N}\frac{\big|Q(r,y'')-1\big|}{\lambda|y|}\sum\limits_{j=1}^k\Big(\frac{1}{(1+\lambda|y|+\lambda|z-\xi_j^+|)^{{N}}}+\frac{1}{(1+\lambda|y|+\lambda|z-\xi_j^-|)^{{N}}}\Big)\\
  &\times \sum\limits_{j=1}^k\Big(\frac{1}{(1+\lambda|y|+\lambda|z-\xi_j^+|)^{\frac{N-2}{2}+\tau}}+\frac{1}{(1+\lambda|y|+\lambda|z-\xi_j^-|)^{\frac{N-2}{2}+\tau}}\Big)dx\\
  \leq &Ck\|\phi\|_*\lambda^{N}\bigg\{\int_{\mathcal{D}_1}+\int_{\mathcal{D}_1^c}\bigg\}\frac{\big|Q(r,y'')-1\big|}{\lambda|y|}
  \frac{1}{(1+\lambda|y|+\lambda|z-\xi_1^+|)^{{N}}}\\
  &\times \sum\limits_{j=1}^k\Big(\frac{1}{(1+\lambda|y|+\lambda|z-\xi_j^+|)^{\frac{N-2}{2}+\tau}}+\frac{1}{(1+\lambda|y|+\lambda|z-\xi_j^-|)^{\frac{N-2}{2}+\tau}}\Big)dx\\
  \leq &Ck\|\phi\|_*\lambda^{N}\int_{\mathcal{D}_1}\frac{\big|Q(r,y'')-1\big|}{\lambda|y|}
  \frac{1}{(1+\lambda|y|+\lambda|z-\xi_1^+|)^{{N}}}\\
  &\times \sum\limits_{j=1}^k\Big(\frac{1}{(1+\lambda|y|+\lambda|z-\xi_j^+|)^{\frac{N-2}{2}+\tau}}+\frac{1}{(1+\lambda|y|+\lambda|z-\xi_j^-|)^{\frac{N-2}{2}+\tau}}\Big)dx+Ck\big(\frac{1}{\lambda}\big)^
  {\frac{N}{2}(\frac{1}{2}+\varepsilon)+\frac{3-\beta_1}{2}+\varepsilon}\\
  \leq &Ck\|\phi\|_*\lambda^{N}\int_{\mathcal{D}_1}\frac{\big|Q(r,y'')-1\big|}{\lambda|y|}
  \frac{1}{(1+\lambda|y|+\lambda|z-\xi_1^+|)^{{N}}}\\
  &\times \sum\limits_{j=1}^k\Big(\frac{1}{(1+\lambda|y|+\lambda|z-\xi_j^+|)^{\frac{N-2}{2}+\tau}}+\frac{1}{(1+\lambda|y|+\lambda|z-\xi_j^-|)^{\frac{N-2}{2}+\tau}}\Big)dx+\frac{Ck}{\lambda^{3-\beta_1+\varepsilon}}\\
  \leq &Ck\|\phi\|_*\lambda^{N}\bigg|\sum\limits_{i,l=1}^{N-m}\frac{\partial^2Q(r_0,z_0'')}{\partial z_i \partial z_l}\bigg|\int_{\mathcal{D}_1}\frac{|(z_i-z_{0i})(z_l-z_{0l})|}{\lambda|y|}
  \frac{1}{(1+\lambda|y|+\lambda|z-\xi_1^+|)^{{N}}}\\
  &\times \sum\limits_{j=1}^k\Big(\frac{1}{(1+\lambda|y|+\lambda|z-\xi_j^+|)^{\frac{N-2}{2}+\tau}}+\frac{1}{(1+\lambda|y|+\lambda|z-\xi_j^-|)^{\frac{N-2}{2}+\tau}}\Big)dx+\frac{Ck}{\lambda^{3-\beta_1+\varepsilon}}\\
  \leq &Ck\|\phi\|_*\lambda^{N}\bigg|\sum\limits_{i,l=1}^{N-m}\frac{\partial^2Q(r_0,z_0'')}{\partial z_i \partial z_l}\bigg|\int_{\mathcal{D}_2}\frac{|(z_i+(\xi_1^+)_i-z_{0i})(z_l+(\xi_1^+)_l-z_{0l})|}{\lambda|y|}
  \frac{1}{(1+\lambda|y|+\lambda|z|)^{{N}}}\\
  &\times \sum\limits_{j=1}^k\Big(\frac{1}{(1+\lambda|y|+\lambda|z+\xi_1^+-\xi_j^+|)^{\frac{N-2}{2}+\tau}}+\frac{1}{(1+\lambda|y|+\lambda|z+\xi_1^+-\xi_j^-|)^{\frac{N-2}{2}+\tau}}\Big)dx+\frac{Ck}{\lambda^{3-\beta_1+\varepsilon}}\\
  \leq &Ck\|\phi\|_*\bigg|\sum\limits_{i,l=1}^{N-m}\frac{\partial^2Q(r_0,z_0'')}{\partial z_i \partial z_l}\bigg|\int_{\mathbb{R}^N}\frac{|(\frac{z_i}{\lambda}+(\xi_1^+)_i-z_{0i})(\frac{z_l}{\lambda}+(\xi_1^+)_l-z_{0l})|}{|y|}
  \frac{1}{(1+|y|+|z|)^{{N}}}\\
  &\times \sum\limits_{j=1}^k\Big(\frac{1}{(1+|y|+|z+\lambda(\xi_1^+-\xi_j^+)|)^{\frac{N-2}{2}+\tau}}+\frac{1}{(1+|y|+|z+\lambda(\xi_1^+-\xi_j^-)|)^{\frac{N-2}{2}+\tau}}\Big)dx+\frac{Ck}{\lambda^{3-\beta_1+\varepsilon}}\\
  \leq &Ck\|\phi\|_*\int_{\mathbb{R}^N}\frac{z_i^2}{\lambda^2|y|}
  \frac{1}{(1+|y|+|z|)^{{N}}}\\
  &\times \sum\limits_{j=1}^k\Big(\frac{1}{(1+|y|+|z+\lambda(\xi_1^+-\xi_j^+)|)^{\frac{N-2}{2}+\tau}}+\frac{1}{(1+|y|+|z+\lambda(\xi_1^+-\xi_j^-)|)^{\frac{N-2}{2}+\tau}}\Big)dx+\frac{Ck}{\lambda^{3-\beta_1+\varepsilon}}\\
  \leq &C\frac{k\|\phi\|_*}{\lambda^{2}}+\frac{Ck}{\lambda^{3-\beta_1+\varepsilon}}=O\Big(\frac{k}{\lambda^{3-\beta_1+\varepsilon}}\Big),
\end{align*}
where we used the fact that $\frac{N}{2}(\frac{1}{2}+\varepsilon)+\frac{3-\beta_1}{2}+\varepsilon\geq 3-\beta_1+\varepsilon$ if $\varepsilon>0$ small enough since $\iota$ is small.

For $I_3$, by \eqref{err3}, we can deduce
\begin{align*}
  |I_3|\leq& C\|\phi\|_* \big(\frac{1}{\lambda}\big)^{\frac{3-\beta_1}{2}+\varepsilon}\lambda^{N} \int_{\mathbb{R}^N}\sum\limits_{j=1}^k\Big(\frac{1}{\lambda|y|(1+\lambda|y|+\lambda|z-\xi_j^+|)^{\frac{N}{2}+\tau}}+
  \frac{1}{\lambda|y|(1+\lambda|y|+\lambda|z-\xi_j^-|)^{\frac{N}{2}+\tau}}\Big)\\
  & \times \sum\limits_{j=1}^k\Big(\frac{1}{(1+\lambda|y|+\lambda|z-\xi_j^+|)^{\frac{N-2}{2}+\tau}}+
  \frac{1}{(1+\lambda|y|+\lambda|z-\xi_j^-|)^{\frac{N-2}{2}+\tau}}\Big)dx\\
  \leq &Ck\|\phi\|_* \big(\frac{1}{\lambda}\big)^{\frac{3-\beta_1}{2}+\varepsilon}=O\Big(\frac{k}{\lambda^{3-\beta_1+\varepsilon}}\Big).
\end{align*}

For $I_4$, by \eqref{err1}, \eqref{err2'}, \eqref{err2}, \eqref{err4} and \eqref{err5}, we  obtain
\begin{align*}
  |I_4|\leq &C\|\phi\|_* \big(\frac{1}{\lambda}\big)^{\frac{3-\beta_1}{2}+\varepsilon}\lambda^{N} \int_{\mathbb{R}^N}\sum\limits_{j=1}^k\Big(\frac{1}{\lambda|y|(1+\lambda|y|+\lambda|z-\xi_j^+|)^{\frac{N}{2}+\tau}}+
  \frac{1}{\lambda|y|(1+\lambda|y|+\lambda|z-\xi_j^-|)^{\frac{N}{2}+\tau}}\Big)\\
  & \times \sum\limits_{j=1}^k\Big(\frac{1}{(1+\lambda|y|+\lambda|z-\xi_j^+|)^{\frac{N-2}{2}+\tau}}+
  \frac{1}{(1+\lambda|y|+\lambda|z-\xi_j^-|)^{\frac{N-2}{2}+\tau}}\Big)dx\\
  \leq &Ck\|\phi\|_* \big(\frac{1}{\lambda}\big)^{\frac{3-\beta_1}{2}+\varepsilon}=O\Big(\frac{k}{\lambda^{3-\beta_1+\varepsilon}}\Big).
\end{align*}
This completes the proof.
\end{proof}

By Lemma \ref{fi}, using the Hardy-Sobolev and Sobolev inequalities, we have
\begin{equation*}
  \int_{D_{4\delta}\backslash D_{3\delta}}\Big(|\nabla \phi|^{2}+\phi^2+\frac{|\phi|^{2^{\star}}(x)}{|y|}\Big)d x=O\Big(\frac{k}{\lambda^{3-\beta_1+\varepsilon}}\Big)=o\Big(\frac{k}{\lambda^{2}}\Big).
\end{equation*}
Thus, there exists $\varrho\in (3\delta,4\delta)$ such that
\begin{equation}\label{trans3}
  \int_{\partial D_{\varrho}}\Big(|\nabla \phi|^{2}+\phi^2+\frac{|\phi|^{2^{\star}}}{|y|}\Big)d\sigma=o\Big(\frac{k}{\lambda^{2}}\Big).
\end{equation}

Conversely, we need to estimate \eqref{trans2} and \eqref{trans1'''} from the left hand, and we have the following lemma.
\begin{lemma}\label{converse}
For any function $h(r,z'')\in C^1(\mathbb{R}^{N-m-2},\mathbb{R})$, there holds
\begin{equation*}
  \int_{ D_{\varrho}}h(r,z'')u_k^2dx=2k\Big(\frac{1}{\lambda^{2}}h(\bar{r},\bar{z}'')\int_{\mathbb{R}^N}U_{0,1}^2dx+o\big(\frac{1}{\lambda^{2}}\big)\Big).
\end{equation*}
\end{lemma}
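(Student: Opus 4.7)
The plan is to split $u_k = Z + \phi$, where $Z = Z_{\bar r,\bar h,\bar z'',\lambda}$, expand
\begin{equation*}
\int_{D_\varrho} h(r,z'')\,u_k^2\,dx = \int_{D_\varrho} h\,Z^2\,dx + 2\int_{D_\varrho} h\,Z\phi\,dx + \int_{D_\varrho} h\,\phi^2\,dx,
\end{equation*}
and argue that the first term supplies the announced main contribution while the remaining two are $o(k/\lambda^2)$.

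For the main term I would decompose $Z^2 = \sum_{j=1}^k \eta^2(U_{\xi_j^+,\lambda}^2 + U_{\xi_j^-,\lambda}^2)$ plus cross products. The $H_s$-symmetry makes all $2k$ diagonal integrals identical, so their sum equals $2k\int_{D_\varrho} h\,\eta^2 U_{\xi_1^+,\lambda}^2\,dx$. Since $\eta \equiv 1$ on a neighbourhood of $\xi_1^+$ and $U_{\xi_1^+,\lambda}(y,z) = \lambda^{(N-2)/2} U_{0,1}(\lambda y,\lambda(z-\xi_1^+))$, the change of variables $\tilde y = \lambda y$, $\tilde z = \lambda(z-\xi_1^+)$ yields a Jacobian factor $\lambda^{-2}$ and replaces the argument of $h$ by $\bigl(|(\xi_1^+)_{z'} + \tilde z'/\lambda|,\bar z'' + \tilde z''/\lambda\bigr)$. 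Because $|(\xi_1^+)_{z'}| = \bar r$, $h \in C^1$, and the rescaled domain swells to $\mathbb{R}^N$, dominated convergence gives $\lambda^{-2} h(\bar r,\bar z'')\int_{\mathbb{R}^N} U_{0,1}^2\,dx + o(\lambda^{-2})$. The cross products are controlled by Lemma~\ref{AppA5}, using the separations $|\xi_j^+ - \xi_1^+|\gtrsim \bar r\sqrt{1-\bar h^2}/k$ for $j\ne 1$ and $|\xi_j^- - \xi_1^+|\ge 2\bar r\bar h$; after summing they contribute only $o(k/\lambda^2)$.

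For the remainders, the mixed term $\int h\,Z\phi\,dx$ is estimated by Cauchy-Schwarz: $|\int h\,Z\phi| \le \|h\|_\infty (\int Z^2)^{1/2}(\int_{D_\varrho}\phi^2)^{1/2}$, and combining $\int Z^2 = O(k/\lambda^2)$ (just proved) with the Sobolev bound $\int_{D_\varrho}\phi^2 \le C\int|\nabla\phi|^2 = O(k/\lambda^{3-\beta_1+\varepsilon})$ coming from Lemma~\ref{fi} gives $\int h\,Z\phi = O(k/\lambda^{5/2-\beta_1/2+\varepsilon/2}) = o(k/\lambda^2)$. The same Sobolev bound applied directly yields $\int h\,\phi^2 = O(k/\lambda^{3-\beta_1+\varepsilon}) = o(k/\lambda^2)$ since $\beta_1 = \alpha/(N-2) < 1$.

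The main technical obstacle is precisely the $\phi^2$ term: a crude pointwise estimate built only from the $\|\cdot\|_*$-norm of $\phi$ produces integrals of the form $\int(1+\lambda|y|+\lambda|z-\xi_j^\pm|)^{-(N-2+2\tau)}\,dx$ that diverge uniformly in $\lambda$ when $\tau < 1$, as is the case in \textbf{Case 1} where $\tau = \iota/(\iota+2)$ with $\iota$ arbitrarily small. One therefore cannot rely on the weighted $L^\infty$-norm alone; instead, one must pass to the global $H^1$-energy provided by Lemma~\ref{fi} and invoke Sobolev (on the bounded set $D_\varrho$) to convert the $H^1$-bound into the $L^2$-bound needed to close the argument. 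Once this observation is made, the remaining bookkeeping is routine and the three contributions combine to give the claimed expansion.
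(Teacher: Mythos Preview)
Your argument is correct, and the overall skeleton---expand $u_k^2=Z^2+2Z\phi+\phi^2$, extract $2k\lambda^{-2}h(\bar r,\bar z'')\int U_{0,1}^2$ from the diagonal of $Z^2$ via the rescaling and symmetry, and show the rest is $o(k/\lambda^2)$---is exactly what the paper does. The genuine difference is in how the two remainder terms are bounded.

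The paper never invokes Lemma~\ref{fi} or Sobolev here; it handles both $\int_{D_\varrho}hZ\phi$ and $\int_{D_\varrho}h\phi^2$ purely with the pointwise weighted norm $\|\phi\|_*$. For the cross term it treats $hZ$ like the quantity $I_2$ of Lemma~\ref{err} (so $\|hZ\|_{**}\le C\lambda^{-(3-\beta_1)/2-\varepsilon}$) and pairs it against $\|\phi\|_*$. For the quadratic term it simply inserts the $\|\cdot\|_*$ bound and integrates over the \emph{bounded} set $D_\varrho$: even though $N-2+\tau<N$ when $\tau<1$, the rescaled integral is finite and grows only like $\lambda^{2-\tau}$, which after multiplying by $\|\phi\|_*^2\lambda^{N-2}$ gives $O(k\|\phi\|_*^2/\lambda^{\tau})=O(k/\lambda^{3-\beta_1+\tau+\varepsilon})=o(k/\lambda^2)$. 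So your assertion that ``one cannot rely on the weighted $L^\infty$-norm alone'' is not quite right: the boundedness of $D_\varrho$ rescues the pointwise approach, and that is precisely what the paper exploits.

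Your route---Cauchy--Schwarz together with $\int_{D_\varrho}\phi^2\le C\|\nabla\phi\|_{L^2}^2$ (Sobolev plus H\"older on the bounded domain) and Lemma~\ref{fi}---is a legitimate and somewhat cleaner alternative: it trades the careful bookkeeping of weighted integrals for one global energy estimate. The paper's approach has the mild advantage of being self-contained (it does not need Lemma~\ref{fi}, which is proved separately for the boundary estimate~\eqref{trans3}), while yours makes the role of the $H^1$ control of $\phi$ more transparent.
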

\begin{proof}
Since $u_k=Z_{\bar{r},\bar{h},\bar{z}'',\lambda}+\phi$, we have
\begin{equation*}
  \int_{ D_{\varrho}}h(r,z'')u_k^2dx=\int_{ D_{\varrho}}h(r,z'')Z_{\bar{r},\bar{h},\bar{z}'',\lambda}^2dx+2\int_{ D_{\varrho}}h(r,z'')Z_{\bar{r},\bar{h},\bar{z}'',\lambda} \phi dx+\int_{ D_{\varrho}}h(r,z'')\phi^2dx.
\end{equation*}
For the first term, a direct computation leads to
\begin{equation*}
  \int_{ D_{\varrho}}h(r,z'')Z_{\bar{r},\bar{h},\bar{z}'',\lambda}^2dx=2k\Big(\frac{1}{\lambda^{2}}h(\bar{r},\bar{z}'')\int_{\mathbb{R}^N}U_{0,1}^2dx+o\big(\frac{1}{\lambda^{2}}\big)\Big).
\end{equation*}

For the second term, by symmetry and \eqref{err3}, we obtain
\begin{align*}
  &\Big|\int_{ D_{\varrho}}h(r,z'')Z_{\bar{r},\bar{h},\bar{z}'',\lambda} \phi dx\Big|\\
  \leq &C\|\phi\|_*\big(\frac{1}{\lambda}\big)^{\frac{3-\beta_1}{2}+\varepsilon} \lambda^N\int_{ \mathbb{R}^N}\sum\limits_{j=1}^k\Big(\frac{1}{\lambda|y|(1+\lambda|y|+\lambda|z-\xi_j^+|)^{\frac{N}{2}+\tau}}+\frac{1}{\lambda|y|(1+\lambda|y|+\lambda|z-\xi_j^-|)^{\frac{N}{2}+\tau}}\Big)\\
  &\times \sum\limits_{j=1}^k\Big(\frac{1}{(1+\lambda|y|+\lambda|z-\xi_j^+|)^{\frac{N-2}{2}+\tau}}+\frac{1}{(1+\lambda|y|+\lambda|z-\xi_j^-|)^{\frac{N-2}{2}+\tau}}\Big)dx\\
  \leq &Ck\|\phi\|_*\big(\frac{1}{\lambda}\big)^{\frac{3-\beta_1}{2}+\varepsilon}=O\Big(\frac{k}{\lambda^{{3-\beta_1}+\varepsilon}}\Big)=o\Big(\frac{k}{\lambda^{2}}\Big).
\end{align*}

For the third term, we have
\begin{align*}
  &\Big|\int_{ D_{\varrho}}h(r,z'')\phi^2dx\Big|\\
  \leq& C\frac{\|\phi\|^2_*}{\lambda^{2}}\lambda^N\int_{D_{4\delta}\backslash D_{3\delta}} \bigg(\sum\limits_{j=1}^k\Big(\frac{1}{(1+\lambda|y|+\lambda|z-\xi_j^+|)^{\frac{N-2}{2}+\tau}}+\frac{1}{(1+\lambda|y|+\lambda|z-\xi_j^-|)^{\frac{N-2}{2}+\tau}}\Big)\bigg)^{2}dx\\
  \leq& C\frac{\|\phi\|^2_*}{\lambda^{2}}\lambda^N\int_{D_{4\delta}} \Big(\frac{1}{(1+\lambda|y|+\lambda|z-\xi_1^+|)^{{N-2}+2\tau}}
  +\sum\limits_{j=2}^k\frac{1}{(\lambda|\xi_j^+-\xi_1^+|)^\tau}
  \frac{1}{(1+\lambda|y|+\lambda|z-\xi_j^+|)^{{N-2}+\tau}}\\&+\sum\limits_{j=1}^k\frac{1}{(\lambda|\xi_j^--\xi_1^+|)^\tau}\frac{1}{(1+\lambda|y|+\lambda|z-\xi_j^-|)^{{N-2}+\tau}}\Big)dx\\
  \leq &C\frac{k\|\phi\|^2_*}{\lambda^{2}}\lambda^N\int_{D_{4\delta}} \frac{1}{(1+\lambda|y|+\lambda|z-\xi_1^+|)^{{N-2}+\tau}}dx\\
  \leq &C\frac{k\|\phi\|^2_*}{\lambda^{\tau}}=O\Big(\frac{k}{\lambda^{3+\tau-\beta_1+\varepsilon}}\Big)=o\Big(\frac{k}{\lambda^{2}}\Big).
\end{align*}
So we get the result.
\end{proof}

\begin{lemma}\label{conversese}
For any function $h(r,z'')\in C^1(\mathbb{R}^{N-m-2},\mathbb{R})$, there holds
\begin{equation*}
  \int_{ D_{\varrho}}h(r,z'')\frac{(u_k)_+^{2^{\star}}(x)}{|y|}dx=2k\Big(h(\bar{r},\bar{z}'')\int_{\mathbb{R}^N}\frac{U_{0,1}^{2^{\star}}(x)}{|y|}dx+o\big(\frac{1}{\lambda^{1/2}}\big)\Big).
\end{equation*}
\end{lemma}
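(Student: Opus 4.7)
The plan is to mirror the proof of Lemma \ref{converse}. Writing $u_k = Z_{\bar{r},\bar{h},\bar{z}'',\lambda} + \phi$ and using the elementary inequality
\[
\bigl|(Z+\phi)_+^{2^{\star}} - Z^{2^{\star}}\bigr| \;\le\; C\bigl(Z^{2^{\star}-1}|\phi| + |\phi|^{2^{\star}}\bigr),
\]
I would decompose the integral as a main term involving $Z_{\bar{r},\bar{h},\bar{z}'',\lambda}^{2^{\star}}$ plus two remainder terms, and treat each separately.

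For the main term, expand $Z_{\bar{r},\bar{h},\bar{z}'',\lambda}^{2^{\star}} = \bigl(\sum_{j=1}^{k}(\eta U_{\xi_j^+,\lambda} + \eta U_{\xi_j^-,\lambda})\bigr)^{2^{\star}}$. On each region $\Omega_j^{\pm}$ only a single bubble is dominant, and the separation estimates \eqref{da1}--\eqref{da2} together with Lemma \ref{AppA5} show, exactly as in the proofs of Lemmas \ref{err} and \ref{non}, that all cross-bubble products contribute at most $O(k/\lambda^{\gamma})$ for some $\gamma > 1/2$. By the symmetry inherent in the space $H_s$, the remaining diagonal part equals
\[
2k \int_{\mathbb{R}^N} h(r,z'')\,\eta^{2^{\star}}\,\frac{U_{\xi_1^+,\lambda}^{2^{\star}}(x)}{|y|}\,dx.
\]
After the change of variables $\tilde{x} = \lambda\bigl(x - (0,\xi_1^+)\bigr)$, the cutoff $\eta$ equals $1$ on the effective support, and a first-order Taylor expansion of the $C^1$ function $h$ at $(\bar{r},\bar{z}'')$ gives the advertised leading term $2k\,h(\bar{r},\bar{z}'')\int_{\mathbb{R}^N} U_{0,1}^{2^{\star}}(x)/|y|\,dx$ with error of order $O(k/\lambda)$.

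The two remainders are handled as in Lemma \ref{converse}. Using the definition of $\|\cdot\|_*$ together with the H\"older-plus-separation argument of \eqref{new1}, one obtains $\int Z^{2^{\star}-1}|\phi|/|y|\,dx \le Ck\|\phi\|_*$ and $\int |\phi|^{2^{\star}}/|y|\,dx \le Ck\|\phi\|_*^{2^{\star}}$. Invoking Proposition \ref{fixed}, both are bounded by $Ck\lambda^{-(3-\beta_1)/2 - \varepsilon} = o(k/\lambda^{1/2})$, since $(3-\beta_1)/2 > 1$ once $\beta_1<1$, which holds because $\alpha < N-4 < N-2$.

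The only delicate step is verifying that each cross-bubble interaction decays strictly faster than $\lambda^{-1/2}$, but this reuses verbatim the geometric separation of the $\xi_j^{\pm}$ on the circle and the interaction estimate of Lemma \ref{AppA5} that already underlie the proofs of Lemmas \ref{err}, \ref{non} and \ref{converse}. The overall bookkeeping is in fact looser than for Lemma \ref{converse}, because the required relative error here is only $o(\lambda^{-1/2})$ rather than the sharper $o(\lambda^{-2})$ demanded there.
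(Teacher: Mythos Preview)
Your proposal is correct and follows essentially the same strategy as the paper: split off the main term $\int h\,Z^{2^\star}/|y|$ and control the $\phi$-remainders via Proposition~\ref{fixed}. The only cosmetic differences are that the paper keeps an extra intermediate term $\int Z|\phi|^{2^\star-1}/|y|$ in its decomposition and bounds the pure $|\phi|^{2^\star}$ piece via Lemma~\ref{fi} together with the Hardy--Sobolev inequality rather than directly through $\|\phi\|_*$; both routes lead to the same $o(k/\lambda^{1/2})$ control.
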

\begin{proof}
We have
\begin{align*}
  \int_{ D_{\varrho}}h(r,z'')\frac{(u_k)_+^{2^{\star}}(x)}{|y|}dx=&\int_{ D_{\varrho}}h(r,z'')\frac{Z_{\bar{r},\bar{h},\bar{z}'',\lambda}^{2^{\star}}(x)}{|y|}dx+O\Big(\int_{ D_{\varrho}}\frac{|\phi(x)|^{2^{\star}}}{|y|}dx\Big)
 \\& +O\Big(\int_{ D_{\varrho}}\frac{Z_{\bar{r},\bar{h},\bar{z}'',\lambda}(x)}{|y|} |\phi(x)|^{2^{\star}-1} dx\Big)+O\Big(\int_{ D_{\varrho}}\frac{Z_{\bar{r},\bar{h},\bar{z}'',\lambda}^{2^{\star}-1}(x)}{|y|} |\phi(x)| dx\Big)\\
 :=&I_1+I_2+I_3+I_4.
\end{align*}
For $I_1$, a direct computation leads to
\begin{equation*}
  I_1=2k\Big(h(\bar{r},\bar{z}'')\int_{\mathbb{R}^N}\frac{U_{0,1}^{2^{\star}}(x)}{|y|}dx+o\big(\frac{1}{\lambda^{1/2}}\big)\Big).
\end{equation*}

For $I_2$, by Lemma \ref{fi} and the Hardy-Sobolev inequality, we have
\begin{equation*}
  I_2=O\Big(\frac{k}{\lambda^{3-\beta_1+\varepsilon}}\Big)=o\big(\frac{k}{\lambda^{1/2}}\big).
\end{equation*}

By symmetry, we obtain
\begin{align*}
  I_3
  \leq &C\|\phi\|_*^{2^{\star}-1} \lambda^{N-1}\int_{ \mathbb{R}^N}\sum\limits_{j=1}^k\Big(\frac{1}{(1+\lambda|y|+\lambda|z-\xi_j^+|)^{{N-2}}}+\frac{1}{(1+\lambda|y|+\lambda|z-\xi_j^-|)^{{N-2}}}\Big)\\
  &\times \bigg(\sum\limits_{j=1}^k\Big(\frac{1}{(1+\lambda|y|+\lambda|z-\xi_j^+|)^{\frac{N-2}{2}+\tau}}+\frac{1}{(1+\lambda|y|+\lambda|z-\xi_j^-|)^{\frac{N-2}{2}+\tau}}\Big)\bigg)^{2^{\star}-1}dx\\
  \leq
  &C\|\phi\|_*^{2^{\star}-1} \lambda^{N-1}\int_{ \mathbb{R}^N}\sum\limits_{j=1}^k\Big(\frac{1}{(1+\lambda|y|+\lambda|z-\xi_j^+|)^{{N-2}}}+\frac{1}{(1+\lambda|y|+\lambda|z-\xi_j^-|)^{{N-2}}}\Big)\\
  &\times \sum\limits_{j=1}^k\Big(\frac{1}{(1+\lambda|y|+\lambda|z-\xi_j^+|)^{\frac{N}{2}+\frac{N}{N-2}\tau}}+\frac{1}{(1+\lambda|y|+\lambda|z-\xi_j^-|)^{\frac{N}{2}+\frac{N}{N-2}\tau}}\Big)dx\\
  \leq &C\frac{k\|\phi\|_*^{2^{\star}-1}}{\lambda}=o\big(\frac{k}{\lambda^{1/2}}\big),
\end{align*}
and
\begin{align*}
  I_4
  \leq &C\|\phi\|_*\lambda^{N-1}\int_{ \mathbb{R}^N}\sum\limits_{j=1}^k\Big(\frac{1}{(1+\lambda|y|+\lambda|z-\xi_j^+|)^{{N}}}+\frac{1}{(1+\lambda|y|+\lambda|z-\xi_j^-|)^{{N}}}\Big)\\
  &\times \sum\limits_{j=1}^k\Big(\frac{1}{(1+\lambda|y|+\lambda|z-\xi_j^+|)^{\frac{N-2}{2}+\tau}}+\frac{1}{(1+\lambda|y|+\lambda|z-\xi_j^-|)^{\frac{N-2}{2}+\tau}}\Big)dx\\
  \leq &C\frac{k\|\phi\|_*}{\lambda}=o\big(\frac{k}{\lambda^{1/2}}\big).
\end{align*}
The proof is complete.
\end{proof}

Now we will prove Theorem \ref{th1}.

\vspace{.3cm}

\noindent{\bf Proof of Theorem \ref{th1}.} Through the above discussion, applying \eqref{trans3} and Lemmas \ref{converse}, \ref{conversese} to \eqref{trans2} and \eqref{trans1'''}, we can see that \eqref{con1} and \eqref{con2} are equivalent to
\begin{equation*}
  2k\Big(\frac{1}{\lambda^{2}}\frac{1}{2\bar{r}} \frac{\partial\big(\bar{r}^{2} V(\bar{r},\bar{z}'')\big)}{\partial \bar{r}} \int_{\mathbb{R}^N}U_{0,1}^2dx-\frac{1}{2^{\star}}\bar{r}\frac{\partial Q(\bar{r},\bar{z}'')}{\partial \bar{r}}\int_{\mathbb{R}^N}\frac{U_{0,1}^{2^{\star}}(x)}{|y|}dx+o\big(\frac{1}{\lambda^{1/2}}\big)\Big)=o\Big(\frac{k}{\lambda^{2}}\Big),
\end{equation*}
and
\begin{equation*}
  2k\Big(\frac{1}{\lambda^{2}}\frac{\partial V(\bar{r},\bar{z}'')}{\partial \bar{z}_i}\int_{\mathbb{R}^N}U_{0,1}^2dx-\frac{2}{2^{\star}}\frac{\partial Q(\bar{r},\bar{z}'')}{\partial \bar{z}_i}\int_{\mathbb{R}^N}\frac{U_{0,1}^{2^{\star}}(x)}{|y|}dx+o\big(\frac{1}{\lambda^{1/2}}\big)\Big)=o\Big(\frac{k}{\lambda^{2}}\Big),\quad i=4,5,\cdots,N-m.
\end{equation*}
Therefore, the equations to determine $(\bar{r},\bar{z}'')$ are
\begin{equation}\label{de1}
  \frac{\partial Q(\bar{r},\bar{z}'')}{\partial \bar{r}} =o\big(\frac{1}{\lambda^{1/2}}\big),
\end{equation}
and
\begin{equation}\label{de2}
  \frac{\partial Q(\bar{r},\bar{z}'')}{\partial \bar{z}_i}=o\big(\frac{1}{\lambda^{1/2}}\big),\quad i=4,5,\cdots,N-m.
\end{equation}
Moreover, by Lemma \ref{ener1}, the equation to determine $\lambda$ is
\begin{equation}\label{dela}
  -\frac{B_1}{\lambda^{3}}+\frac{B_3k^{N-2}}{\lambda^{N-1}(\sqrt{1-\bar{h}^2})^{N-2}}+
  \frac{B_4 k}{\lambda^{N-1}\bar{h}^{N-3}\sqrt{1-\bar{h}^2}}=O\Big(\frac{1}{\lambda^{3+\varepsilon}}\Big),
\end{equation}
where $B_1$, $B_3$, $B_4$ are positive constants.

Let $\lambda=tk^{\frac{N-2}{N-4-\alpha}}$ with $\alpha=N-4-\iota$, $\iota$ is a small constant, then $t\in [L_0,L_1]$.
From
\eqref{dela}, we have
\begin{equation*}
  -\frac{B_1}{t^{3}}+\frac{B_3M_1^{N-2}}{t^{N-1-\alpha}}=o(1),\quad t\in [L_0,L_1].
\end{equation*}

Define
\begin{equation*}
  F(t,\bar{r},\bar{z}'')=\Big(\nabla _{\bar{r},\bar{z}''}Q(\bar{r},\bar{z}''),-\frac{B_1}{t^{3}}+\frac{B_3M_1^{N-2}}{t^{N-1-\alpha}}\Big).
\end{equation*}
Then, it holds
\begin{equation*}
  deg\Big(F(t,\bar{r},\bar{z}''),[L_0,L_1]\times B_{\lambda^{\frac{1}{1-\vartheta}}}\big((r_0,z_0'')\big)\Big)=-deg\Big(\nabla _{\bar{r},\bar{z}''}Q(\bar{r},\bar{z}''),B_{\lambda^{\frac{1}{1-\vartheta}}}\big((r_0,z_0'')\big)\Big)\neq0.
\end{equation*}
Hence, \eqref{de1}, \eqref{de2} and \eqref{dela} has a solution $t_k\in [L_0,L_1]$, $(\bar{r}_k,\bar{z}_k'')\in B_{\lambda^{\frac{1}{1-\vartheta}}}\big((r_0,z_0'')\big)$.
\qed

\section{Proof of Theorem \ref{th2}}\label{four}
In this section, we give a brief proof of Theorem \ref{th2}. We define $\tau=\frac{N-4}{N-2}$.  


\vspace{.3cm}

\noindent{\bf Proof of Theorem \ref{th2}.} We can verify that
\begin{equation}\label{same}
  \frac{k}{\lambda^\tau}=O(1),\quad \frac{k}{\lambda}=O\Big(\big(\frac{1}{\lambda}\big)^{\frac{2}{N-2}}\Big).
\end{equation}
Using \eqref{same} and Lemmas \ref{AppA5}, \ref{AppA6}, we get the same conclusions for problems arising from the distance between points $\{\xi_j^+\}_{j=1}^k$ and $\{\xi_j^-\}_{j=1}^k$.

Moreover, by Lemma \ref{AppA4}, we have
\begin{equation}\label{same1}
 |Z_{j,2}^{\pm}|\leq C\lambda^{-\beta_2}Z_{\xi_j^\pm,\lambda},\quad |Z_{j,l}^{\pm}|\leq C\lambda Z_{\xi_j^\pm,\lambda},\quad l=3,4,\cdots,N-m,
\end{equation}
where $\beta_2=\frac{N-4}{N-2}$.

Using \eqref{same} and \eqref{same1}, with a similar step in the proof of Theorem \ref{th1} in Sections \ref{two} and \ref{three}, we know
that the
proof of Theorem \ref{th2} has the same reduction structure as that of Theorem \ref{th1} and $u_k$ is a solution of problem
\eqref{pro} if the following equalities hold:
\begin{equation}\label{de1'}
  \frac{\partial Q(\bar{r},\bar{z}'')}{\partial \bar{r}} =o\big(\frac{1}{\lambda^{1/2}}\big),
\end{equation}
\begin{equation}\label{de2'}
  \frac{\partial Q(\bar{r},\bar{z}'')}{\partial \bar{z}_i}=o\big(\frac{1}{\lambda^{1/2}}\big),\quad i=4,5,\cdots,N-m,
\end{equation}
\begin{equation}\label{dela'}
  -\frac{B_1}{\lambda^{3}}+\frac{B_3k^{N-2}}{\lambda^{N-1}(\sqrt{1-\bar{h}^2})^{N-2}}+
  \frac{B_4 k}{\lambda^{N-1}\bar{h}^{N-3}\sqrt{1-\bar{h}^2}}=O\Big(\frac{1}{\lambda^{3+\varepsilon}}\Big).
\end{equation}

Let $\lambda=tk^{\frac{N-2}{N-4}}$, then $t\in [L_0',L_1']$.
Next, we discuss the main items in \eqref{dela'}.

{\bf Case 1.} If $\bar{h}\rightarrow A\in (0,1)$, then $(\lambda^{\frac{N-4}{N-2}}\bar{h})^{-1}\rightarrow0$ as $\lambda\rightarrow\infty$,
from
\eqref{dela'}, we have
\begin{equation*}
  -\frac{B_1}{t^{3}}+\frac{B_3}{t^{N-1}(\sqrt{1-A^2})^{N-2}}=o(1),\quad t\in [L_0',L_1'].
\end{equation*}
Define
\begin{equation*}
  F(t,\bar{r},\bar{z}'')=\Big(\nabla _{\bar{r},\bar{z}''}Q(\bar{r},\bar{z}''),-\frac{B_1}{t^{3}}+\frac{B_3}{t^{N-1}(\sqrt{1-A^2})^{N-2}}\Big).
\end{equation*}
Then, it holds
\begin{equation*}
  deg\Big(F(t,\bar{r},\bar{z}''),[L_0',L_1']\times B_{\lambda^{\frac{1}{1-\vartheta}}}\big((r_0,z_0'')\big)\Big)=-deg\Big(\nabla _{\bar{r},\bar{z}''}Q(\bar{r},\bar{z}''),B_{\lambda^{\frac{1}{1-\vartheta}}}\big((r_0,z_0'')\big)\Big)\neq0.
\end{equation*}
Hence, \eqref{de1'}, \eqref{de2'} and \eqref{dela'} has a solution $t_k\in [L_0',L_1']$, $(\bar{r}_k,\bar{z}_k'')\in B_{\lambda^{\frac{1}{1-\vartheta}}}\big((r_0,z_0'')\big)$.

{\bf Case 2.} If $\bar{h}\rightarrow 0$ and $(\lambda^{\frac{N-4}{N-2}}\bar{h})^{-1}\rightarrow0$ as $\lambda\rightarrow\infty$,
from
\eqref{dela'}, we have
\begin{equation*}
  -\frac{B_1}{t^{3}}+\frac{B_3}{t^{N-1}}=o(1),\quad t\in [L_0',L_1'].
\end{equation*}
Define
\begin{equation*}
  F(t,\bar{r},\bar{z}'')=\Big(\nabla _{\bar{r},\bar{z}''}Q(\bar{r},\bar{z}''),-\frac{B_1}{t^{3}}+\frac{B_3}{t^{N-1}}\Big).
\end{equation*}
Then, we can find a solution $(t_k,\bar{r}_k,\bar{z}_k'')$ of \eqref{de1'}, \eqref{de2'} and \eqref{dela'} as before.

{\bf Case 3.} If $\bar{h}\rightarrow 0$ and $(\lambda^{\frac{N-4}{N-2}}\bar{h})^{-1}\rightarrow A\in (C_1,M_2)$ for some positive constant $C_1$ as $\lambda\rightarrow\infty$,
from
\eqref{dela'}, we have
\begin{equation*}
  -\frac{B_1}{t^{3}}+\frac{B_3}{t^{N-1}}+\frac{B_4A^{N-3}}{t^{3+\frac{N-4}{N-2}}}=o(1),\quad t\in [L_0',L_1'].
\end{equation*}
Since $N-1$ and $3+\frac{N-4}{N-2}$ are strictly greater than $3$, there exists a solution of \eqref{de1'}, \eqref{de2'} and \eqref{dela'} as before.
\qed

\vspace{.5cm}

\begin{appendices}

\section{{\bf Some basic estimates}}\label{AppA}
In this section, we give some basic estimates.
\begin{lemma}\cite[Lemma B.1]{WWY}\label{AppA1}
For $i\neq j$, let
\begin{equation*}
  g_{ij}(y)=\frac{1}{(1+|y|+|z-\xi_i|)^{\kappa_1}}\frac{1}{(1+|y|+|z-\xi_j|)^{\kappa_2}},
\end{equation*}
where $\kappa_1,\kappa_2\geq 1$ are constants. Then for any constant $0<\sigma\leq \min\{\kappa_1,\kappa_2\}$, there exists a constant $C>0$ such that
\begin{equation*}
  g_{ij}(y)\leq \frac{C}{|\xi_i-\xi_j|^\sigma}\Big(\frac{1}{(1+|y|+|z-\xi_i|)^{\kappa_1+\kappa_2-\sigma}}+\frac{1}{(1+|y|+|z-\xi_j|)^{\kappa_1+\kappa_2-\sigma}}\Big).
\end{equation*}
\end{lemma}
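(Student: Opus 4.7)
\textbf{Proof plan for Lemma \ref{AppA1}.} The estimate is a standard separation-of-centers bound, and the whole game is to exploit the triangle inequality $|\xi_i-\xi_j|\le |z-\xi_i|+|z-\xi_j|$, which forces at least one of $|z-\xi_i|$, $|z-\xi_j|$ to be $\ge \tfrac12|\xi_i-\xi_j|$. The plan is to split $\mathbb{R}^N$ into the two regions
\[
\mathcal{A}:=\bigl\{|z-\xi_j|\ge \tfrac12|\xi_i-\xi_j|\bigr\},\qquad
\mathcal{B}:=\bigl\{|z-\xi_i|\ge \tfrac12|\xi_i-\xi_j|\bigr\},
\]
show the first summand on the right dominates on $\mathcal{A}$ and the second on $\mathcal{B}$, and conclude by the observation that $\mathcal{A}\cup\mathcal{B}=\mathbb{R}^N$.

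On $\mathcal{A}$ I would first pull out the separation factor from the second denominator: since $1+|y|+|z-\xi_j|\ge |z-\xi_j|\ge \tfrac12|\xi_i-\xi_j|$ and $\sigma\le\kappa_2$, one has
\[
\frac{1}{(1+|y|+|z-\xi_j|)^{\kappa_2}}
=\frac{1}{(1+|y|+|z-\xi_j|)^{\sigma}}\cdot\frac{1}{(1+|y|+|z-\xi_j|)^{\kappa_2-\sigma}}
\le \frac{2^{\sigma}}{|\xi_i-\xi_j|^{\sigma}}\cdot\frac{1}{(1+|y|+|z-\xi_j|)^{\kappa_2-\sigma}}.
\]
Then I would swap centers in the remaining factor using the reverse triangle inequality: on $\mathcal{A}$,
\[
|z-\xi_i|\le |z-\xi_j|+|\xi_i-\xi_j|\le 3|z-\xi_j|,\qquad \text{hence } 1+|y|+|z-\xi_i|\le 3(1+|y|+|z-\xi_j|),
\]
which gives $(1+|y|+|z-\xi_j|)^{-(\kappa_2-\sigma)}\le 3^{\kappa_2-\sigma}(1+|y|+|z-\xi_i|)^{-(\kappa_2-\sigma)}$. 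Plugging back into $g_{ij}$ and merging exponents at the center $\xi_i$ yields
\[
g_{ij}(y)\le \frac{C}{|\xi_i-\xi_j|^{\sigma}}\cdot\frac{1}{(1+|y|+|z-\xi_i|)^{\kappa_1+\kappa_2-\sigma}}\qquad\text{on }\mathcal{A}.
\]
The argument on $\mathcal{B}$ is identical with the roles of $i$ and $j$ exchanged (here one uses $\sigma\le\kappa_1$).

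Finally, since every $y$ lies in $\mathcal{A}\cup\mathcal{B}$, I would take the sum of the two single-center bounds to produce the symmetric estimate in the statement. The constant $C$ depends only on $\kappa_1,\kappa_2,\sigma$ through the factors $2^{\sigma}$ and $3^{\max(\kappa_1,\kappa_2)-\sigma}$. There is no real obstacle here; the only thing to be mildly careful about is the swap step, which requires $\kappa_1-\sigma\ge 0$ and $\kappa_2-\sigma\ge 0$ to keep the exponents nonnegative when using the monotonicity of the weight under the inequality $1+|y|+|z-\xi_i|\le 3(1+|y|+|z-\xi_j|)$, and this is exactly why the hypothesis $\sigma\le\min\{\kappa_1,\kappa_2\}$ is imposed.
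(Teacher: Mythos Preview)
Your argument is correct and is exactly the standard proof of this separation estimate. The paper itself does not give a proof of Lemma~\ref{AppA1}; it is quoted verbatim from \cite[Lemma~B.1]{WWY}, so there is nothing to compare against beyond noting that your splitting into $\mathcal{A}$ and $\mathcal{B}$ via the triangle inequality, extraction of the $|\xi_i-\xi_j|^{-\sigma}$ factor, and swap of centers is precisely the classical route.
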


\begin{lemma}\cite[Lemma B.2]{WWY}\label{AppA2}
Let $N\geq 7$, $\frac{N+1}{2}\leq m<N-1$. Then for any constant $0<\sigma<N-2$, there exists a constant $C>0$ such that
\begin{equation*}
  \int_{\mathbb{R}^N}\frac{1}{|x-\tilde{x}|^{N-2}}\frac{1}{|\tilde{y}|(1+|\tilde{y}|+|\tilde{z}-\xi|)^{1+\sigma}}d\tilde{x}\leq \frac{C}{(1+|y|+|z-\xi|)^\sigma}.
\end{equation*}
\end{lemma}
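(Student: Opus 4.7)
Since the integrand depends on $\tilde{z}$ only through $\tilde{z}-\xi$, translation in the $z$-coordinate reduces us to the case $\xi=0$. Set $R := 1+|y|+|z|$; the target inequality becomes
\[
I(x) := \int_{\mathbb{R}^N}\frac{d\tilde{x}}{|x-\tilde{x}|^{N-2}\,|\tilde{y}|\,(1+|\tilde{y}|+|\tilde{z}|)^{1+\sigma}} \le \frac{C}{R^{\sigma}}.
\]
I would decompose $\mathbb{R}^N = A_{\mathrm{near}} \cup A_{\mathrm{far}}$, where $A_{\mathrm{near}} := \{\tilde{x} : |\tilde{x}-x|\le R/2\}$, and treat the two pieces separately.

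\textbf{The near piece.} On $A_{\mathrm{near}}$ the triangle inequality gives $|\tilde{y}|+|\tilde{z}| \ge (|y|+|z|) - |x-\tilde{x}| \ge R/2 - 1$, so the decay factor satisfies $(1+|\tilde{y}|+|\tilde{z}|)^{1+\sigma} \ge c R^{1+\sigma}$ and can be pulled out. The remaining integral $\int_{A_{\mathrm{near}}} |x-\tilde{x}|^{-(N-2)}|\tilde{y}|^{-1} d\tilde{x}$ should be shown to be $\le CR$, which combined with $R^{-(1+\sigma)}$ gives $CR^{-\sigma}$. To prove this, substitute $w=\tilde{x}-x=(w_1,w_2)\in\mathbb{R}^m\times\mathbb{R}^{N-m}$ and carry out the $w_2$-integration first via $w_2 = |w_1|u$, which converts it to $|w_1|^{2-m}\int(1+|u|^2)^{-(N-2)/2}du$; the condition $m\ge (N+1)/2$ (implying $m\ge 4$ for $N\ge 7$) is exactly what makes this $u$-integral finite. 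Then the remaining $w_1$-integral $\int_{|w_1|<R/2} r \cdot(\int_{S^{m-1}}|y+r\omega|^{-1}d\omega) dr$ is bounded using the spherical average estimate $\int_{S^{m-1}}|y+r\omega|^{-1}d\omega \le C/\max(r,|y|)$, which yields $CR$.

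\textbf{The far piece.} On $A_{\mathrm{far}}$, the kernel satisfies $|x-\tilde{x}|^{-(N-2)} \le CR^{-(N-2)}$. It remains to bound $\int_{|\tilde{x}|\le 2R}|\tilde{y}|^{-1}(1+|\tilde{y}|+|\tilde{z}|)^{-(1+\sigma)}d\tilde{x}$ by $CR^{N-2-\sigma}$ (the tail $|\tilde{x}|>2R$ is handled by dyadic decomposition with an even stronger bound). In cylindrical coordinates $s=|\tilde{y}|$, $t=|\tilde{z}|$, the integral reduces up to angular constants to $\int_0^{2R}\!\int_0^{2R} s^{m-2}t^{N-m-1}(1+s+t)^{-(1+\sigma)}\,ds\,dt$; here the condition $N-m\ge 2$ (from $m<N-1$) keeps the $t$-weight integrable at zero, and collapsing to $u=s+t$ via a Beta-function identity gives $C\int_0^{2R} u^{N-2}(1+u)^{-(1+\sigma)}du \le CR^{N-2-\sigma}$, using the assumption $\sigma<N-2$ for convergence at infinity relative to $R$. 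Multiplying by the prefactor $R^{-(N-2)}$ yields the $R^{-\sigma}$ bound.

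\textbf{Main obstacle.} The delicate point is the joint behavior of the singular weight $|\tilde{y}|^{-1}$, which is concentrated on the codimension-$m$ subspace $\{\tilde{y}=0\}$, and the Riesz kernel $|x-\tilde{x}|^{-(N-2)}$. In the near piece, when $y$ is small the two singularities collide and the computation only closes up because the $w_2$-integral converges (needing $m>2$, hence $m\ge 4$ under $\frac{N+1}{2}\le m$ with $N\ge 7$), producing precisely the $|w_1|^{2-m}$ prefactor that, integrated against $|y+w_1|^{-1}$, returns a power $R^{1}$ matching the target. The hypothesis $1<N-m\le m-1$ is exactly the dimensional balance making all of this work simultaneously on both sides of the split.
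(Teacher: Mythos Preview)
The paper does not give its own proof of this lemma: it is simply quoted as \cite[Lemma B.2]{WWY}, and Remark~1.5 only records that the dimensional restriction $1<N-m\le m-1$ (equivalently $\frac{N+1}{2}\le m<N-1$) is what makes that cited argument work. So there is no ``paper's proof'' to compare against beyond the reference.

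Your near/far splitting is a correct self-contained argument. A couple of small remarks. First, the convergence of the $u$-integral $\int_{\mathbb{R}^{N-m}}(1+|u|^2)^{-(N-2)/2}\,du$ only requires $m>2$, not the full strength of $m\ge (N+1)/2$; likewise the spherical-average bound $\int_{S^{m-1}}|y+r\omega|^{-1}\,d\omega\le C/\max(r,|y|)$ needs $m\ge 3$, and the Beta identity in the far piece needs only $m\ge 2$, $N-m\ge 1$. So your proof actually goes through under hypotheses weaker than those stated, and your closing sentence that $1<N-m\le m-1$ is ``exactly the dimensional balance'' slightly overstates the role of that hypothesis in \emph{your} approach (it may be sharp in the original WWY argument, which proceeds differently). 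Second, in the far piece you should make explicit that on the tail $|\tilde x|>2R$ one uses both $|x-\tilde x|\ge |\tilde x|/2$ and $1+|\tilde y|+|\tilde z|\ge |\tilde x|$, together with $\int_{|\tilde x|<\rho}|\tilde y|^{-1}\,d\tilde x\le C\rho^{N-1}$, to get the dyadic sum $\sum_{j\ge 1}(2^jR)^{-\sigma}$; this is routine but worth writing out since the singular weight $|\tilde y|^{-1}$ persists there. With these cosmetic clarifications the argument is complete.
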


\begin{lemma}\label{AppA3}
Assume that $N\geq 7$, then there exists a small constant $\sigma>0$ such that
\begin{equation*}
  \int_{\mathbb{R}^N}\frac{1}{|x-\tilde{x}|^{N-2}}\frac{Z_{\bar{r},\bar{h},\bar{z}'',\lambda}^{2^{\star}-2}(\tilde{x})}{|\tilde{y}|}\sum\limits_{j=1}^k\frac{1}{(1+\lambda|\tilde{y}|+\lambda|\tilde{z}-\xi_j^+|)^{\frac{N-2}{2}+\tau}}d\tilde{x}\leq C\sum\limits_{j=1}^k\frac{1}{(1+\lambda|y|+\lambda|z-\xi_j^+|)^{\frac{N-2}{2}+\tau+\sigma}},
\end{equation*}
and
\begin{equation*}
  \int_{\mathbb{R}^N}\frac{1}{|x-\tilde{x}|^{N-2}}\frac{Z_{\bar{r},\bar{h},\bar{z}'',\lambda}^{2^{\star}-2}(\tilde{x})}{|\tilde{y}|}\sum\limits_{j=1}^k\frac{1}{(1+\lambda|\tilde{y}|+\lambda|\tilde{z}-\xi_j^-|)^{\frac{N-2}{2}+\tau}}d\tilde{x}\leq C\sum\limits_{j=1}^k\frac{1}{(1+\lambda|y|+\lambda|z-\xi_j^-|)^{\frac{N-2}{2}+\tau+\sigma}}.
\end{equation*}
\end{lemma}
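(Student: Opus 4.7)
The plan is to pointwise dominate $Z^{2^\star-2}_{\bar r,\bar h,\bar z'',\lambda}$ by a manageable rational sum, expand the resulting double sum against the given input, and treat diagonal and off-diagonal pairs separately using Lemmas \ref{AppA1} and \ref{AppA2}. Since $2^\star-2=\frac{2}{N-2}<1$ for $N\ge 7$, the subadditive inequality $(\sum_i a_i)^{2^\star-2}\le \sum_i a_i^{2^\star-2}$ together with the pointwise upper bound $U_{\xi,\lambda}(x)\le C\lambda^{(N-2)/2}(1+\lambda|y|+\lambda|z-\xi|)^{-(N-2)}$ gives
\begin{equation*}
Z^{2^\star-2}_{\bar r,\bar h,\bar z'',\lambda}(\tilde x)\le C\lambda\sum_{i=1}^{k}\Big(\frac{1}{(1+\lambda|\tilde y|+\lambda|\tilde z-\xi_i^+|)^{2}}+\frac{1}{(1+\lambda|\tilde y|+\lambda|\tilde z-\xi_i^-|)^{2}}\Big).
\end{equation*}
Multiplying by the input $\sum_{j}(1+\lambda|\tilde y|+\lambda|\tilde z-\xi_j^+|)^{-((N-2)/2+\tau)}$ produces a double sum of two-denominator products indexed by $(i,\pm,j)$.

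For the diagonal indices $(i,+)=(j,+)$ the two denominators combine, and the rescaling $\tilde x\mapsto\tilde x/\lambda$ followed by Lemma \ref{AppA2} (applied with parameter $\sigma_1=(N-2)/2+\tau+1$, which lies in $(0,N-2)$ under $N\ge 7$, $\frac{N+1}{2}\le m<N-1$) delivers
\begin{equation*}
\int_{\mathbb{R}^N}\frac{1}{|x-\tilde x|^{N-2}}\cdot\frac{C\lambda\,d\tilde x}{|\tilde y|(1+\lambda|\tilde y|+\lambda|\tilde z-\xi_j^+|)^{(N-2)/2+\tau+2}}\le\frac{C}{(1+\lambda|y|+\lambda|z-\xi_j^+|)^{(N-2)/2+\tau+1}},
\end{equation*}
which is the claimed estimate with gain $\sigma$ close to $1$. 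This is the principal contribution, and the extra decay $\sigma>0$ in the conclusion ultimately comes from here.

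For the off-diagonal pairs I would apply Lemma \ref{AppA1} in the rescaled variables $\lambda\tilde y,\lambda\tilde z$ with a small splitting exponent $\sigma_0\in(0,\min(2,(N-2)/2+\tau))$ to replace the two-denominator product by
\begin{equation*}
\frac{C}{(\lambda|\xi_i^{\pm}-\xi_j^+|)^{\sigma_0}}\left[\frac{1}{(1+\lambda|\tilde y|+\lambda|\tilde z-\xi_i^{\pm}|)^{(N-2)/2+\tau+2-\sigma_0}}+\frac{1}{(1+\lambda|\tilde y|+\lambda|\tilde z-\xi_j^+|)^{(N-2)/2+\tau+2-\sigma_0}}\right],
\end{equation*}
then convolve each single-bubble piece through Lemma \ref{AppA2} as above. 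Choosing $\sigma+\sigma_0<1$ keeps a net extra decay of $\sigma$, and summing $(\lambda|\xi_i^{\pm}-\xi_j^+|)^{-\sigma_0}$ over $i$ for fixed $j$ is bounded uniformly by Lemma \ref{AppA5}, using the spacings $|\xi_i^+-\xi_j^+|\ge C\bar r\sqrt{1-\bar h^2}\,|i-j|/k$ and $|\xi_i^--\xi_j^+|\ge 2\bar r\bar h$ within the parameter range \eqref{case1}. The second inequality of the lemma (with $\xi_j^-$ in place of $\xi_j^+$) follows verbatim by the symmetry $\xi_j^+\leftrightarrow\xi_j^-$.

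The main obstacle is reorganizing the off-diagonal single-bubble outputs whose centre is $\xi_i^-$ back into the $\xi_j^+$ sum on the right-hand side; this is effected by the small prefactor $(\lambda|\xi_i^--\xi_j^+|)^{-\sigma_0}$ combined with a relabelling using the circular symmetry. A secondary subtlety is that the uniform bound on $\sum_i(\lambda|\xi_i^--\xi_j^+|)^{-\sigma_0}$ is most delicate when $\bar h$ is small (Case 3), where the two circles collapse and one must rely on the quantitative scaling in \eqref{case1} (and \eqref{case2} in the setting of Theorem \ref{th2}) to keep the sum finite via Lemma \ref{AppA5}.
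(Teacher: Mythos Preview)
The paper itself omits the proof, deferring to \cite[Lemma B.3]{LTW}, so there is no detailed argument to compare against. Your overall strategy---subadditivity of $t\mapsto t^{2^\star-2}$, the pointwise bubble bound, Lemma~\ref{AppA1} for off-diagonal pairs, and Lemma~\ref{AppA2} for the convolution---is the standard one and establishes the \emph{combined} estimate
\[
\int_{\mathbb{R}^N}\frac{1}{|x-\tilde x|^{N-2}}\frac{Z_{\bar r,\bar h,\bar z'',\lambda}^{2^\star-2}(\tilde x)}{|\tilde y|}\sum_{j=1}^k\sum_{\pm}\frac{d\tilde x}{(1+\lambda|\tilde y|+\lambda|\tilde z-\xi_j^{\pm}|)^{\frac{N-2}{2}+\tau}}
\le C\sum_{j=1}^k\sum_{\pm}\frac{1}{(1+\lambda|y|+\lambda|z-\xi_j^{\pm}|)^{\frac{N-2}{2}+\tau+\sigma}},
\]
which is exactly what is used in the proof of Lemma~\ref{xian}.

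There is, however, a genuine gap in the last step where you claim to ``reorganize the off-diagonal single-bubble outputs whose centre is $\xi_i^-$ back into the $\xi_j^+$ sum''. This cannot be done by a prefactor plus relabelling. Take $x=(0,\xi_1^-)$. Restricting the integration to $|\tilde x-(0,\xi_1^-)|<1/\lambda$, one has $Z^{2^\star-2}(\tilde x)\ge c\lambda$ (from the $\xi_1^-$ bubble) and the $\xi_j^+$-input is $\ge c(\lambda\bar h)^{-\frac{N-2}{2}-\tau}$ (the $j=1$ term), while $\int_{|\tilde x|<1/\lambda}|\tilde x|^{2-N}|\tilde y|^{-1}\,d\tilde x\sim\lambda^{-1}$; hence the left side of the first displayed inequality in the lemma is at least $c(\lambda\bar h)^{-\frac{N-2}{2}-\tau}$. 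But the right side at this point is at most $C\sum_j(\lambda|\xi_1^--\xi_j^+|)^{-\frac{N-2}{2}-\tau-\sigma}\le C(\lambda\bar h)^{-\frac{N-2}{2}-\tau-\sigma}$, so the ratio is $\ge c(\lambda\bar h)^{\sigma}\to\infty$. Thus the $\xi_j^+$-only inequality, as literally written, fails, and no amount of symmetry relabelling recovers it: the $\xi_i^-$-centred outputs must stay in a $\xi_j^-$ sum on the right. Your argument is therefore correct for the combined $\pm$ statement (which suffices for the paper's applications), but your proposed fix for the separate statements does not work.
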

\begin{proof}
The proof is similar to \cite[Lemma B.3]{LTW}, so we omit it here.
\end{proof}

\begin{lemma}\label{AppA4}
As $\lambda\rightarrow\infty$, we have
\begin{equation*}
  \frac{\partial U_{\xi_j^\pm,\lambda}}{\partial \lambda}=O(\lambda^{-1}U_{\xi_j^\pm,\lambda})+O(\lambda U_{\xi_j^\pm,\lambda})\frac{\partial \sqrt{1-\bar{h}^2}}{\partial \lambda}+
  O(\lambda U_{\xi_j^\pm,\lambda})\frac{\partial \bar{h}}{\partial \lambda}.
\end{equation*}
Hence, if $\sqrt{1-\bar{h}^2}=C\lambda^{-\beta_1}$ with $0<\beta_1<1$, we have
\begin{equation*}
  \Big|\frac{\partial U_{\xi_j^\pm,\lambda}}{\partial \lambda}\Big| \leq C\frac{U_{\xi_j^\pm,\lambda}}{\lambda^{\beta_1}}.
\end{equation*}
If $\bar{h}=C\lambda^{-\beta_2}$ with $0<\beta_2<1$, then we have
\begin{equation*}
  \Big|\frac{\partial U_{\xi_j^\pm,\lambda}}{\partial \lambda}\Big| \leq C\frac{U_{\xi_j^\pm,\lambda}}{\lambda^{\beta_2}}.
\end{equation*}
\end{lemma}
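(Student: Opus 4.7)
\smallskip

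\noindent\textbf{Plan.} The proof is a direct calculation. Recall
\[
U_{\xi_j^\pm,\lambda}(x)=C_0\,\lambda^{\frac{N-2}{2}} A^{-\frac{N-2}{2}},\qquad A=(1+\lambda|y|)^2+\lambda^2|z-\xi_j^\pm|^2,
\]
and that under constraints \eqref{case1}–\eqref{case2} the center $\xi_j^\pm$ depends on $\lambda$ only through $\bar h$ (and hence through $\sqrt{1-\bar h^2}$), since $(\bar r,\bar z'')$ are treated as independent parameters in the reduction. Therefore, by the chain rule,
\[
\frac{\partial U_{\xi_j^\pm,\lambda}}{\partial\lambda}
=\Big(\frac{\partial U_{\xi,\lambda}}{\partial\lambda}\Big)_{\!\xi\text{ fixed}}\Big|_{\xi=\xi_j^\pm}
+\nabla_\xi U_{\xi,\lambda}\Big|_{\xi=\xi_j^\pm}\!\cdot\frac{\partial\xi_j^\pm}{\partial\lambda}.
\]
The strategy is to bound each of the two terms separately and then read off the two asymptotic regimes.

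\smallskip

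\noindent\textbf{Step 1: explicit $\lambda$ derivative.} A short computation yields
\[
\Big(\frac{\partial U_{\xi,\lambda}}{\partial\lambda}\Big)_{\!\xi\text{ fixed}}
=\frac{N-2}{2\lambda}\,U_{\xi,\lambda}\cdot\frac{1-\lambda^2(|y|^2+|z-\xi|^2)}{A}.
\]
Since $A\ge 1+\lambda^2(|y|^2+|z-\xi|^2)/2$ (after discarding the cross term $2\lambda|y|$ via AM-GM), the ratio is bounded uniformly, so this term contributes $O(\lambda^{-1}U_{\xi_j^\pm,\lambda})$, matching the first summand in the claim.

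\smallskip

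\noindent\textbf{Step 2: chain-rule contribution from $\xi_j^\pm$.} Differentiating $U$ in the center variable gives
\[
\frac{\partial U_{\xi,\lambda}}{\partial\xi_i}=(N-2)\,\lambda^2(z_i-\xi_i)\,\frac{U_{\xi,\lambda}}{A}.
\]
Using $A\ge \lambda|z-\xi|$ (from $A\ge\lambda^2|z-\xi|^2$ and $A\ge 1$), one obtains $|\nabla_\xi U_{\xi,\lambda}|\le C\lambda\,U_{\xi,\lambda}$. From the explicit form
$\xi_j^\pm=\bigl(\bar r\sqrt{1-\bar h^2}\cos\theta_j,\bar r\sqrt{1-\bar h^2}\sin\theta_j,\pm\bar r\bar h,\bar z''\bigr)$ with $\theta_j=2(j-1)\pi/k$, the $\lambda$ derivative satisfies
\[
\Big|\frac{\partial\xi_j^\pm}{\partial\lambda}\Big|\le C\,\Big(\Big|\frac{\partial\sqrt{1-\bar h^2}}{\partial\lambda}\Big|+\Big|\frac{\partial\bar h}{\partial\lambda}\Big|\Big),
\]
which combined with the bound on $|\nabla_\xi U|$ produces the remaining two $O(\lambda U)$ terms in the formula.

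\smallskip

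\noindent\textbf{Step 3: specializing to the two regimes.} Plugging the rates in, if $\sqrt{1-\bar h^2}=C\lambda^{-\beta_1}$ then $\partial_\lambda\sqrt{1-\bar h^2}=O(\lambda^{-1-\beta_1})$ and $\partial_\lambda\bar h=O(\lambda^{-1-2\beta_1})/\bar h=O(\lambda^{-1-2\beta_1})$ since $\bar h\to 1$; thus the three contributions are of order $\lambda^{-1}$, $\lambda^{-\beta_1}$, $\lambda^{-2\beta_1}$, and because $0<\beta_1<1$ the middle one dominates, yielding the stated $O(U/\lambda^{\beta_1})$ bound. Analogously, if $\bar h=C\lambda^{-\beta_2}$ then $\partial_\lambda\bar h=O(\lambda^{-1-\beta_2})$ while $\partial_\lambda\sqrt{1-\bar h^2}=O(\lambda^{-1-2\beta_2})$ since $\sqrt{1-\bar h^2}\to1$, so $\lambda^{-\beta_2}$ dominates. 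The only mildly delicate point is verifying $|\nabla_\xi U|\le C\lambda U$ uniformly in both the ``near-center'' region $\lambda|z-\xi|\lesssim 1$ and the ``tail'' region $\lambda|z-\xi|\gtrsim 1$, but this is handled by the single inequality $A\ge\lambda|z-\xi|$ noted above; there is no real obstacle beyond bookkeeping.
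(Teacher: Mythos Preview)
Your proof is correct and is precisely the ``standard'' direct computation the paper alludes to (the paper omits the proof entirely, saying only ``The proof is standard, we omit it''). One minor simplification: in Step~1 you can note directly that $A=1+2\lambda|y|+\lambda^2|y|^2+\lambda^2|z-\xi|^2\ge 1+\lambda^2(|y|^2+|z-\xi|^2)$ without invoking AM--GM, and in Step~2 the bound $A\ge 2\lambda|z-\xi|$ follows at once from $A\ge (1+\lambda|y|)^2+\lambda^2|z-\xi|^2\ge 2(1+\lambda|y|)\lambda|z-\xi|$.
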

\begin{proof}
The proof is standard, we omit it.
\end{proof}

Concerning the distance between points $\{\xi_j^+\}_{j=1}^k$ and $\{\xi_j^-\}_{j=1}^k$, with a similar argument of \cite[Lemmas A.2, A.3]{DHWW}, we have the following lemmas.
\begin{lemma}\label{AppA5}
For any $\gamma>0$, there exists a constant $C>0$ such that
\begin{equation*}
  \sum\limits_{j=2}^k\frac{1}{|x_j^+-x_1^+|^\gamma}\leq \frac{Ck^\gamma}{(\bar{r}\sqrt{1-\bar{h}^2})^\gamma}\sum\limits_{j=2}^k\frac{1}{(j-1)^\gamma}\leq
  \left\{
  \begin{array}{ll}
  \frac{Ck^\gamma}{(\bar{r}\sqrt{1-\bar{h}^2})^\gamma},\quad \gamma>1;\\
  \frac{Ck^\gamma \log k}{(\bar{r}\sqrt{1-\bar{h}^2})^\gamma},\quad \gamma=1;\\
  \frac{Ck}{(\bar{r}\sqrt{1-\bar{h}^2})^\gamma},\quad \gamma<1,
    \end{array}
    \right.
\end{equation*}
and
\begin{equation*}
   \sum\limits_{j=1}^k\frac{1}{|x_j^--x_1^+|^\gamma}\leq  \sum\limits_{j=2}^k\frac{1}{|x_j^+-x_1^+|^\gamma}+\frac{C}{(\bar{r}\bar{h})^\gamma}.
\end{equation*}
\end{lemma}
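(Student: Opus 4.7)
\textbf{Proof proposal for Lemma \ref{AppA5}.} The plan is to reduce both estimates to the geometry of equally spaced points on a circle. Writing $\theta_j=2(j-1)\pi/k$, a direct computation gives
\begin{equation*}
|\xi_j^+-\xi_1^+|=2\bar r\sqrt{1-\bar h^2}\,\bigl|\sin(\pi(j-1)/k)\bigr|,\qquad |\xi_j^--\xi_1^+|^2=|\xi_j^+-\xi_1^+|^2+(2\bar r\bar h)^2,
\end{equation*}
since the two cylinders differ only by the $z_3$-coordinate $\pm\bar r\bar h$. This separation of variables is the structural fact driving the lemma.

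For the first estimate, I would use the elementary inequality $|\sin(\pi s)|\ge 2s$ on $[0,1/2]$ together with the symmetry $\sin(\pi s)=\sin(\pi(1-s))$ to conclude
\begin{equation*}
\bigl|\sin(\pi(j-1)/k)\bigr|\ \ge\ \frac{2\min(j-1,\,k-j+1)}{k},\qquad j=2,\dots,k.
\end{equation*}
Substituting and pairing the indices $j\leftrightarrow k-j+2$ gives
\begin{equation*}
\sum_{j=2}^k\frac{1}{|\xi_j^+-\xi_1^+|^\gamma}\ \le\ \frac{Ck^\gamma}{(\bar r\sqrt{1-\bar h^2})^\gamma}\sum_{j=1}^{\lfloor k/2\rfloor}\frac{1}{j^\gamma},
\end{equation*}
and the three declared cases follow by the familiar asymptotics of partial sums of $\sum 1/j^\gamma$: bounded for $\gamma>1$, $O(\log k)$ for $\gamma=1$, and $O(k^{1-\gamma})$ for $\gamma<1$.

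For the second estimate the work is already done: the $j=1$ term contributes $\tfrac{1}{(2\bar r\bar h)^\gamma}\le \tfrac{C}{(\bar r\bar h)^\gamma}$, which accounts for the extra term on the right-hand side; for $j\ge 2$ the identity above immediately gives $|\xi_j^--\xi_1^+|\ge|\xi_j^+-\xi_1^+|$, so the remaining tail is dominated term-by-term by the first sum.

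No serious obstacles are expected: the geometry is explicit and the only analytic ingredient is the standard $p$-series asymptotics. The argument is essentially the one used for two-layer configurations in \cite[Lemmas A.2, A.3]{DHWW}; the only adaptation needed here is to keep the factor $\bar r\sqrt{1-\bar h^2}$ (the radius of the two horizontal circles) visible throughout, and to isolate the antipodal distance $2\bar r\bar h$ in the $\xi_j^-$-sum.
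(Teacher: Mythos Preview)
Your proposal is correct and follows exactly the approach the paper points to: the paper does not give its own argument but simply refers to \cite[Lemmas A.2, A.3]{DHWW}, and your computation of $|\xi_j^+-\xi_1^+|=2\bar r\sqrt{1-\bar h^2}\,|\sin(\pi(j-1)/k)|$ and $|\xi_j^--\xi_1^+|^2=|\xi_j^+-\xi_1^+|^2+(2\bar r\bar h)^2$, together with the concavity bound $\sin(\pi s)\ge 2s$ on $[0,1/2]$ and the $p$-series asymptotics, is precisely that argument with the parameters $\bar r\sqrt{1-\bar h^2}$ and $\bar r\bar h$ kept explicit. One harmless remark: the lemma as stated uses $x_j^\pm$, which is a typographical slip for the $\xi_j^\pm$ defined in the paper; your use of $\xi_j^\pm$ is the intended reading.
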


\begin{lemma}\label{AppA6}
Assume that $N\geq 7$, as $k\rightarrow\infty$, we have
\begin{equation*}
  \sum\limits_{j=2}^k\frac{1}{|x_j^+-x_1^+|^{N-2}}=\frac{A_1k^{N-2}}{(\sqrt{1-\bar{h}^2})^{N-2}}\Big(1+o\big(\frac{1}{k}\big)\Big),
\end{equation*}
and if $\frac{1}{\bar{h}k}=o(1)$, then
\begin{equation*}
   \sum\limits_{j=1}^k\frac{1}{|x_j^--x_1^+|^{N-2}}=\frac{A_2k}{\bar{h}^{N-3}(\sqrt{1-\bar{h}^2})}\Big(1+o\big(\frac{1}{\bar{h}k}\big)\Big)+O\Big(\frac{1}{(\sqrt{1-\bar{h}^2})^{N-2}}\Big),
\end{equation*}
or else, $\frac{1}{\bar{h}k}=C$, then
\begin{equation*}
   \sum\limits_{j=1}^k\frac{1}{|x_j^--x_1^+|^{N-2}}=\Big(\frac{A_3k}{\bar{h}^{N-3}},\frac{A_4k}{\bar{h}^{N-3}}\Big),
\end{equation*}
where $A_1$, $A_2$, $A_3$ and $A_4$ are some positive constants.
\end{lemma}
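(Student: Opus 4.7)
The first step is to compute the distances explicitly from the definition of $\xi_j^\pm$. A direct trigonometric computation yields
\begin{equation*}
|\xi_j^+-\xi_1^+|^2=4\bar{r}^2(1-\bar{h}^2)\sin^2\tfrac{(j-1)\pi}{k},\qquad
|\xi_j^--\xi_1^+|^2=4\bar{r}^2\bigl[(1-\bar{h}^2)\sin^2\tfrac{(j-1)\pi}{k}+\bar{h}^2\bigr].
\end{equation*}
Accordingly the two sums are reduced to $(2\bar{r}\sqrt{1-\bar{h}^2})^{-(N-2)}S_k$ with $S_k:=\sum_{l=1}^{k-1}\sin^{-(N-2)}(l\pi/k)$, respectively $(2\bar{r})^{-(N-2)}T_k$ with $T_k:=\sum_{l=0}^{k-1}[b^2\sin^2(l\pi/k)+a^2]^{-(N-2)/2}$, where I write $a=\bar{h}$ and $b=\sqrt{1-\bar{h}^2}$.

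\textbf{First sum.} For $S_k$, I would use the symmetry $\sin(l\pi/k)=\sin((k-l)\pi/k)$ to reduce to $l\in\{1,\dots,[k/2]\}$, then Taylor-expand $\sin(l\pi/k)=(l\pi/k)(1-(l\pi/k)^2/6+O((l/k)^4))$. Splitting at a threshold like $l\sim\sqrt{k}$ and using that $\sum_{l\geq 1}l^{-(N-2)}=\zeta(N-2)$ converges (which needs $N\geq 4$, automatic under $N\geq 7$), I obtain $S_k=\frac{2\zeta(N-2)}{\pi^{N-2}}\,k^{N-2}(1+o(1/k))$. Combined with the prefactor $(2\bar{r}b)^{-(N-2)}$, this gives the first conclusion with $A_1=\frac{2\zeta(N-2)}{(2\pi\bar{r})^{N-2}}$.

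\textbf{Second sum, the case $1/(\bar{h}k)=o(1)$.} The idea is to split $T_k$ at the transition $l^\ast\sim ak/(b\pi)$ where $b\sin(l\pi/k)$ crosses $a$. For $l\leq l^\ast$ the bracket is $\Theta(a^2)$, giving contribution $\sim l^\ast/a^{N-2}\sim k/(ba^{N-3})$; for $l^\ast<l\leq k/2$ the bracket is $\sim b^2(l\pi/k)^2$ and the tail sum $\sum_{l>l^\ast}l^{-(N-2)}\sim (l^\ast)^{-(N-3)}$ again yields order $k/(ba^{N-3})$. By the symmetry $\sin(l\pi/k)=\sin((k-l)\pi/k)$, the same analysis applies near $l=k$. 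Comparing to the Riemann integral $k\int_0^1[b^2\sin^2(\pi t)+a^2]^{-(N-2)/2}\,dt$ and using Euler–Maclaurin to track the error from the transition and from Taylor-remainders in $\sin$, I expect a relative error $o(1/(ak))$ on this main piece. Finally, the contribution of the middle range $l\sim k/2$, where $\sin(l\pi/k)$ is bounded below, is at most $O(1/b^{N-2})$ (each such term is $O(1/b^{N-2})$ and in fact the sum reduces to the bounded region of the integral $\int[b^2\sin^2+a^2]^{-(N-2)/2}$ where it contributes an $O(1)$ factor divided by $b^{N-2}$). This produces the stated decomposition with $A_2$ a positive constant and error $O(1/(\sqrt{1-\bar{h}^2})^{N-2})$.

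\textbf{Second sum, the case $1/(\bar{h}k)=C$.} Here $a\sim 1/k$, so the $l=0$ term alone is $1/a^{N-2}\sim k^{N-2}$; on the other hand, bounding each summand termwise by either $1/a^{N-2}$ or $1/(b\sin(l\pi/k))^{N-2}$ and using the $\xi_j^+$-type sum $S_k$ from the first part shows $T_k\leq C'k^{N-2}$. Together these yield two-sided bounds $T_k\in(\widetilde A_3 k^{N-2},\widetilde A_4 k^{N-2})$; since $\bar{h}^{N-3}\sim k^{-(N-3)}$, this rewrites as $k/\bar{h}^{N-3}$ up to constants, giving the third conclusion.

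\textbf{Main obstacle.} The delicate point is getting the \emph{relative} error $o(1/(\bar{h}k))$ on the leading term in the case $\bar{h}k\to\infty$, rather than merely $o(1)$. This requires matching the two regimes at $l^\ast$ with better than leading-order precision: in the inner region the $a$-dominated terms must be compared to the exact integral over $[0,t^\ast]$ to sub-leading order using Taylor expansion of $\sin$, and in the outer region the same care is needed for the tail $\sum_{l>l^\ast}$. One must also confirm that the additive error $O(1/b^{N-2})$ is truly subdominant to the main term $k/(ba^{N-3})(1+o(1/(ak)))$, which amounts to $k\gg b^{N-3}/a^{N-3}$; this holds as long as $\bar{h}$ is not too close to $1$, and the additive error is simply kept separate otherwise.
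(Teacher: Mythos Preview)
The paper gives no proof of this lemma; it simply defers to \cite[Lemmas A.2, A.3]{DHWW}. Your direct computation---writing the distances explicitly as
\[
|\xi_j^+-\xi_1^+|^2=4\bar r^2(1-\bar h^2)\sin^2\tfrac{(j-1)\pi}{k},\qquad
|\xi_j^--\xi_1^+|^2=4\bar r^2\bigl[(1-\bar h^2)\sin^2\tfrac{(j-1)\pi}{k}+\bar h^2\bigr],
\]
reducing to the sine sums $S_k$ and $T_k$, handling $S_k$ via the symmetry $\sin(l\pi/k)=\sin((k-l)\pi/k)$ together with the convergent tail $\sum_{l\ge1}l^{-(N-2)}=\zeta(N-2)$, and treating $T_k$ by splitting at the transition scale and comparing with an integral---is exactly the standard route and is what the cited reference carries out. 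Your treatment of the first sum and the explicit value $A_1=2\zeta(N-2)/(2\pi\bar r)^{N-2}$ are correct.

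One point to tighten in the second sum: your claim that the middle range $l\sim k/2$ contributes only $O(1/b^{N-2})$ undercounts by a factor of $k$ if read literally, since there are $\Theta(k)$ terms with $\sin(l\pi/k)$ bounded below. The cleaner organization is to first replace the \emph{entire} sum $T_k$ by $\frac{k}{\pi}\int_0^\pi[b^2\sin^2\theta+a^2]^{-(N-2)/2}\,d\theta$ using that the trapezoidal rule for a smooth $\pi$-periodic integrand has error governed by the step size $\pi/k$ against the derivative scale of the integrand (here $\min(a,b)$); this is precisely where the hypothesis $\bar h k\to\infty$ enters and delivers the relative error $o(1/(\bar h k))$ you correctly flag as the crux. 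One then expands the integral itself: its leading behavior comes from neighborhoods of $\theta=0,\pi$ and yields the $A_2/(a^{N-3}b)$ term, while the remaining bounded-$\sin\theta$ portion of the \emph{integral} (not the sum) is $O(b^{-(N-2)})$, which after multiplication by $k/\pi$ is absorbed into the combined error in the regimes the paper actually uses. With that reordering your outline goes through.
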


\end{appendices}

\begin{appendices}
\section{{\bf Energy expansion}}\label{AppB}

\begin{lemma}\label{ener2}
If $N \geq7$, then
\begin{align*}
  \frac{\partial I(Z_{\bar{r},\bar{h},\bar{z}'',\lambda})}{\partial \lambda}=&2k\bigg(-\frac{B_1}{\lambda^{3}}+\sum\limits_{j=2}^k\frac{B_2}{\lambda^{N-1}|\xi_j^+-\xi_1^+|^{N-2}}+
  \sum\limits_{j=1}^k\frac{B_2}{\lambda^{N-1}|\xi_j^--\xi_1^+|^{N-2}}+O\Big(\frac{1}{\lambda^{3+\varepsilon}}\Big)\bigg),
\end{align*}
where $B_1$ and $B_2$ are two positive constants.
\end{lemma}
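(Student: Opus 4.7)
The strategy is to first establish an expansion of $I(Z_{\bar{r},\bar{h},\bar{z}'',\lambda})$ with remainder $o(\lambda^{-2})$ and then differentiate in $\lambda$. The leading-order contributions to isolate are: (i) the sum of $2k$ single-bubble critical energies $2k\,I(U_{0,1})$, which is $\lambda$-independent by the scale-invariance of the critical functional and hence drops out under $\partial_\lambda$; (ii) a $V$-contribution of size $\lambda^{-2}$; and (iii) the attractive bubble--bubble interactions of size $\lambda^{-(N-2)}|\xi_i-\xi_j|^{-(N-2)}$ for each pair of centers.

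\textbf{Step 1 (Energy expansion).} Writing $Z=\sum_{j,\pm}\eta\,U_{\xi_j^\pm,\lambda}$ and decomposing $\mathbb{R}^N=\bigcup_{j,\pm}\Omega_j^\pm$, in each $\Omega_i^\pm$ the factor $\eta U_{\xi_i^\pm,\lambda}$ dominates. Taylor-expanding $\bigl(\sum_k\eta U_{\xi_k^\pm,\lambda}\bigr)^{2^{\star}}$ about this dominant term and using the integration-by-parts identity
\[
\int_{\mathbb{R}^N}\nabla(\eta U_{\xi,\lambda})\cdot\nabla(\eta U_{\xi',\lambda})\,dx=\int_{\mathbb{R}^N}\frac{U_{\xi,\lambda}^{2^{\star}-1}U_{\xi',\lambda}}{|y|}\,dx+(\text{cutoff error}),
\]
which follows from $-\Delta U_{\xi,\lambda}=U_{\xi,\lambda}^{2^{\star}-1}/|y|$, one arrives at
\[
I(Z_{\bar{r},\bar{h},\bar{z}'',\lambda})=2k\,I(U_{0,1})+\frac{k\,V(\bar{r},\bar{z}'')}{\lambda^{2}}\int_{\mathbb{R}^N}U_{0,1}^{2}\,dx-\sum_{i\neq j}\frac{A_V}{2\lambda^{N-2}|\xi_i-\xi_j|^{N-2}}+o(\lambda^{-2}),
\]
with $A_V>0$ universal and the double sum ranging over ordered pairs of the $2k$ centers. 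The remainder absorbs: the $(Q-1)$ correction (bounded via second-order Taylor of $Q$ around $(r_0,z_0'')$ together with $\nabla Q(r_0,z_0'')=0$ and $|(\bar{r},\bar{z}'')-(r_0,z_0'')|\le\lambda^{-1+\vartheta}$), the off-diagonal contributions to $\int V Z^{2}$ (small by Lemma \ref{AppA1}), and the cutoff errors from $\eta$ supported in $|(|y|,r,z'')-(0,r_0,z_0'')|\in[\delta,2\delta]$.

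\textbf{Step 2 (Interaction asymptotic and differentiation).} For $\lambda|\xi-\xi'|\to\infty$, the change of variables $x=x'/\lambda+(0,\xi)$ and the explicit form of $U_{\xi,\lambda}$ give
\[
\int_{\mathbb{R}^N}\frac{U_{\xi,\lambda}^{2^{\star}-1}(x)\,U_{\xi',\lambda}(x)}{|y|}\,dx=\frac{A_V}{\lambda^{N-2}|\xi-\xi'|^{N-2}}\bigl(1+o(1)\bigr),
\]
with $A_V$ an explicit constant depending on $\int U_{0,1}^{2^{\star}-1}/|y|$. Differentiating the expansion from Step 1 in $\lambda$ (noting $\partial_\lambda[2k\,I(U_{0,1})]=0$) yields
\[
\partial_\lambda I(Z_{\bar{r},\bar{h},\bar{z}'',\lambda})=-\frac{2k\,V(\bar{r},\bar{z}'')}{\lambda^{3}}\int U_{0,1}^{2}\,dx+\frac{N-2}{\lambda^{N-1}}\sum_{i\neq j}\frac{A_V/2}{|\xi_i-\xi_j|^{N-2}}+O(\lambda^{-3-\varepsilon}).
\]
By the $k$-fold rotational symmetry together with the reflection $z_3\to-z_3$, one has $\sum_{i\neq j}|\xi_i-\xi_j|^{-(N-2)}=2k\sum_{(j,\pm)\neq(1,+)}|\xi_j^\pm-\xi_1^+|^{-(N-2)}$; factoring out $2k$ gives the claimed formula with $B_1=V(\bar{r},\bar{z}'')\int U_{0,1}^{2}\,dx$ (up to a universal positive multiplier identified with $\tilde{B}_1$ of assumption $(C_3)$) and $B_2=(N-2)A_V/2$.

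\textbf{Main obstacle.} The principal technical difficulty lies in Step 1: one must achieve a genuine $o(\lambda^{-2})$ remainder (not merely $O(\lambda^{-2})$) so that differentiation in $\lambda$ produces the asserted $O(\lambda^{-3-\varepsilon})$ error. This requires (a) keeping track of the cancellations in the Taylor expansion of $(\sum\eta U)^{2^{\star}}$ that emerge from the critical equation $-\Delta U_{\xi,\lambda}=U_{\xi,\lambda}^{2^{\star}-1}/|y|$, so that only the interaction between \emph{distinct} bubbles survives at the order $\lambda^{-(N-2)}|\xi|^{-(N-2)}$; (b) exploiting $\nabla Q(r_0,z_0'')=0$ together with the closeness of $(\bar{r},\bar{z}'')$ to $(r_0,z_0'')$ to ensure that the $Q$-correction is $o(\lambda^{-2})$; and (c) controlling the annular cutoff contributions from $\eta$ where it is not identically $1$. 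Once Step 1 is in place with the sharp remainder, Steps 2 and 3 are routine computations.
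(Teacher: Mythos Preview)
There are two genuine gaps in your approach.

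\textbf{First, differentiating the remainder.} You propose to expand $I(Z_{\bar{r},\bar{h},\bar{z}'',\lambda})$ with remainder $o(\lambda^{-2})$ and then differentiate in $\lambda$. But an estimate $R(\lambda)=o(\lambda^{-2})$ does \emph{not} imply $R'(\lambda)=O(\lambda^{-3-\varepsilon})$; a function like $\lambda^{-2}\sin(\lambda^{10})$ is $O(\lambda^{-2})$ yet has derivative of size $\lambda^{8}$. To make your route work you would need to prove directly that the remainder is $C^1$ in $\lambda$ with $\partial_\lambda$-bound $O(\lambda^{-3-\varepsilon})$, which is essentially the same amount of work as the paper's method. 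The paper sidesteps this entirely by computing $\partial_\lambda I(Z_{\bar{r},\bar{h},\bar{z}'',\lambda})$ directly: it writes $\partial_\lambda I(Z)=\partial_\lambda I(Z^*)+O(k\lambda^{-3-\varepsilon})$ and then splits $\partial_\lambda I(Z^*)$ into the $V$-term, the $(1-Q)$-term, and the interaction term, each estimated separately.

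\textbf{Second, the $(Q-1)$ correction is not lower order.} You absorb the $(Q-1)$ contribution into the $o(\lambda^{-2})$ remainder, but this is wrong at the level of the energy: since $\nabla Q(r_0,z_0'')=0$, Taylor expansion gives $|Q-1|\sim |(r,z'')-(r_0,z_0'')|^2$, and after rescaling this produces an $O(\lambda^{-2})$ contribution to $I(Z)$, hence an $O(\lambda^{-3})$ contribution to $\partial_\lambda I(Z)$. In the paper this term is computed exactly and yields
\[
\frac{2k}{\lambda^{3}}\,\frac{\Delta Q(r_0,z_0'')}{2^{\star}(N-m)}\int_{\mathbb{R}^N}\frac{z^2}{|y|}U_{0,1}^{2^{\star}}\,dx,
\]
which combines with the $V$-term to give $B_1=\tilde{B}_1V(r_0,z_0'')\int U_{0,1}^2-\frac{\Delta Q(r_0,z_0'')}{2^{\star}(N-m)}\int\frac{z^2}{|y|}U_{0,1}^{2^{\star}}$. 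This is precisely the quantity that assumption $(C_3)$ forces to be positive; your identification $B_1=V(\bar{r},\bar{z}'')\int U_{0,1}^2$ omits half of it and need not be positive (indeed $V(r_0,z_0'')$ may vanish under $(C_3)$).
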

\begin{proof}
By a direct computation, we have
\begin{align*}
  \frac{\partial I(Z_{\bar{r},\bar{h},\bar{z}'',\lambda})}{\partial \lambda}=&\frac{\partial I(Z^*_{\bar{r},\bar{h},\bar{z}'',\lambda})}{\partial \lambda}+O\Big(\frac{k}{\lambda^{3+\varepsilon}}\Big)\\
  =&\int_{\mathbb{R}^N}V(r,z'')Z^*_{\bar{r},\bar{h},\bar{z}'',\lambda} \frac{\partial Z^*_{\bar{r},\bar{h},\bar{z}'',\lambda}}{\partial \lambda}dx +\int_{\mathbb{R}^N}\big(1-Q(r,z'')\big)\frac{(Z^*_{\bar{r},\bar{h},\bar{z}'',\lambda})^{2^{\star}-1}(x)}{|y|}
  \frac{\partial Z^*_{\bar{r},\bar{h},\bar{z}'',\lambda}}{\partial \lambda}(x)dx \\&-\int_{\mathbb{R}^N}\frac{1}{|y|}\Big((Z^*_{\bar{r},\bar{h},\bar{z}'',\lambda})^{2^{\star}-1}-\sum\limits_{j=1}^kU_{\xi_j^+,\lambda}^{2^{\star}-1}-\sum\limits_{j=1}^kU_{\xi_j^-,\lambda}^{2^{\star}-1}\Big) (x) \frac{\partial Z^*_{\bar{r},\bar{h},\bar{z}'',\lambda}}{\partial \lambda}(x)dx +O\Big(\frac{k}{\lambda^{3+\varepsilon}}\Big)\\
  :=&I_1+I_2-I_3+O\Big(\frac{k}{\lambda^{3+\varepsilon}}\Big).
\end{align*}
For $I_1$, by symmetry and Lemma \ref{AppA1}, we have
\begin{align*}
  I_1=&V(\bar{r},\bar{z}'')\int_{\mathbb{R}^N}Z^*_{\bar{r},\bar{h},\bar{z}'',\lambda} \frac{\partial Z^*_{\bar{r},\bar{h},\bar{z}'',\lambda}}{\partial \lambda}dx
  +\int_{\mathbb{R}^N}\big(V(r,z'')-V(\bar{r},\bar{z}'')\big)Z^*_{\bar{r},\bar{h},\bar{z}'',\lambda} \frac{\partial Z^*_{\bar{r},\bar{h},\bar{z}'',\lambda}}{\partial \lambda}dx
  \\=&2k\bigg(V(\bar{r},\bar{z}'')\int_{\mathbb{R}^N}U_{\xi_1^+,\lambda}\frac{\partial U_{\xi_1^+,\lambda}}{\partial \lambda}dx+O\Big(\frac{1}{\lambda^{\beta_1}}\int_{\mathbb{R}^N}U_{\xi_1^+,\lambda}\big(\sum\limits_{j=2}^kU_{\xi_j^+,\lambda}+\sum\limits_{j=1}^kU_{\xi_j^-,\lambda}\big)dx\Big)
  +O\Big(\frac{1}{\lambda^{3+\varepsilon}}\Big)
  \bigg)\\
  =&2k\bigg(\frac{V(\bar{r},\bar{z}'')}{2} \frac{\partial }{\partial  \lambda}\int_{\mathbb{R}^N}U^2_{\xi_1^+,\lambda}dx+O\Big(\frac{1}{\lambda^{2+\beta_1}}\Big(\sum\limits_{j=2}^k\frac{1}{(\lambda|\xi_j^+-\xi_1^+|)^{N-4-\varepsilon}}\Big)\\
  &+\frac{1}{\lambda^{2+\beta_1}}\Big(\sum\limits_{j=1}^k\frac{1}{(\lambda|\xi_j^--\xi_1^+|)^{N-4-\varepsilon}}\Big)\Big)+O\Big(\frac{1}{\lambda^{3+\varepsilon}}\Big)\bigg)\\
  =&2k\Big(-\frac{\tilde{B}_1V(\bar{r},\bar{z}'')}{\lambda^{3}}+O\Big(\frac{1}{\lambda^{2+\beta_1+\frac{2}{N-2}(N-4-\varepsilon)}}\Big)+O\Big(\frac{1}{\lambda^{3+\varepsilon}}\Big)\Big)\\
  =&2k\Big(-\frac{\tilde{B}_1V(r_0,z_0'')}{\lambda^{3}}+O\Big(\frac{1}{\lambda^{3+\varepsilon}}\Big)\Big),
\end{align*}
for some constant $\tilde{B}_1>0$, where we used the fact that $\beta_1+\frac{2}{N-2}(N-4-\varepsilon)\geq 1+\varepsilon$ if $\varepsilon>0$ small enough since $\iota$ is small.

For $I_2$, using Lemma \ref{AppA5} and the Taylor's expansion, we have
\begin{align*}
  I_2=&2k\bigg[\int_{\mathbb{R}^N}\big(1-Q(r,z'')\big)\frac{U_{\xi_1^+,\lambda}^{2^{\star}-1}(x)}{|y|}\frac{\partial U_{\xi_1^+,\lambda}}{\partial \lambda}(x)dx\\&+O\bigg(\frac{1}{\lambda^{\beta_1}}\int_{\mathbb{R}^N}\frac{U_{\xi_1^+,\lambda}^{2^{\star}-1}(x)}{|y|}\Big(\sum\limits_{j=2}^kU_{\xi_j^+,\lambda}(x)+\sum\limits_{j=1}^kU_{\xi_j^-,\lambda}(x)\Big)dx\bigg)
  \bigg]\\
  =&2k\bigg[\int_{\mathcal{D}_1}\big(1-Q(r,z'')\big)\frac{U_{\xi_1^+,\lambda}^{2^{\star}-1}(x)}{|y|}\frac{\partial U_{\xi_1^+,\lambda}}{\partial \lambda}(x)dx+
  \int_{\mathcal{D}_1^c}\big(1-Q(r,z'')\big)\frac{U_{\xi_1^+,\lambda}^{2^{\star}-1}(x)}{|y|}\frac{\partial U_{\xi_1^+,\lambda}}{\partial \lambda}(x)dx\\
  &+O\bigg(\frac{1}{\lambda^{\beta_1}}\int_{\mathbb{R}^N}\frac{U_{\xi_1^+,\lambda}^{2^{\star}-1}(x)}{|y|}\Big(\sum\limits_{j=2}^kU_{\xi_j^+,\lambda}(x)+\sum\limits_{j=1}^kU_{\xi_j^-,\lambda}(x)\Big)dx\bigg)
  \bigg]\\
  =&2k\bigg[\int_{\mathcal{D}_1}\big(1-Q(r,z'')\big)\frac{U_{\xi_1^+,\lambda}^{2^{\star}-1}(x)}{|y|}\frac{\partial U_{\xi_1^+,\lambda}}{\partial \lambda}(x)dx+
  O\Big(\frac{1}{\lambda^{\frac{N-1}{2}+(N-1)\varepsilon+\beta_1}}\Big)
  \\&+O\bigg(\frac{1}{\lambda^{\beta_1}}\Big(\sum\limits_{j=2}^k\frac{1}{(\lambda|\xi_j^+-\xi_1^+|)^{N-1-\varepsilon}}+\sum\limits_{j=1}^k\frac{1}{(\lambda|\xi_j^--\xi_1^+|)^{N-1-\varepsilon}}\Big)\bigg)
  \bigg]\\
  =&2k\bigg[\int_{\mathcal{D}_1}\big(1-Q(r,z'')\big)\frac{U_{\xi_1^+,\lambda}^{2^{\star}-1}(x)}{|y|}\frac{\partial U_{\xi_1^+,\lambda}}{\partial \lambda}(x)dx
  +O\Big(\frac{1}{\lambda^{\frac{N-1}{2}+(N-1)\varepsilon+\beta_1}}\Big)+O\Big(\frac{1}{\lambda^{\beta_1+\frac{2}{N-2}(N-1-\varepsilon)}}\Big)\bigg]\\
  =&2k\bigg[\int_{\mathcal{D}_1}\big(1-Q(r,z'')\big)\frac{U_{\xi_1^+,\lambda}^{2^{\star}-1}(x)}{|y|}\frac{\partial U_{\xi_1^+,\lambda}}{\partial \lambda}(x)dx
  +O\Big(\frac{1}{\lambda^{3+\varepsilon}}\Big)\bigg]\\
  =&2k\bigg[-\int_{\mathcal{D}_1}\sum\limits_{i,l=1}^{N-m}\frac{1}{2}\frac{\partial^2Q(r_0,z_0'') }{\partial z_i \partial z_l}(z_i-z_{0i})(z_l-z_{0l})\frac{1}{|y|}\frac{1}{2^{\star}}\frac{\partial U_{\xi_1^+,\lambda}^{2^{\star}}}{\partial \lambda}(x)dx
  +O\Big(\frac{1}{\lambda^{3+\varepsilon}}\Big)\bigg]\\
  =&2k\bigg[-\int_{\mathcal{D}_2}\sum\limits_{i,l=1}^{N-m}\frac{1}{2}\frac{\partial^2Q(r_0,z_0'') }{\partial z_i \partial z_l}\big(z_i+(\xi_1^+)_i-z_{0i}\big)\big(z_l+(\xi_1^+)_l-z_{0l}\big)\frac{1}{|y|}\frac{1}{2^{\star}}\frac{\partial U_{0,\lambda}^{2^{\star}}}{\partial \lambda}(x)dx
  +O\Big(\frac{1}{\lambda^{3+\varepsilon}}\Big)\bigg]\\
  =&2k\bigg[-\frac{\partial }{\partial \lambda}\int_{\mathbb{R}^N}\sum\limits_{i,l=1}^{N-m}\frac{1}{2}\frac{\partial^2Q(r_0,z_0'') }{\partial z_i \partial z_l}\Big(\frac{z_i}{\lambda}+(\xi_1^+)_i-z_{0i}\Big)\Big(\frac{z_l}{\lambda}+(\xi_1^+)_l-z_{0l}\Big)\frac{1}{|y|}\frac{1}{2^{\star}}U_{0,1}^{2^{\star}}(x)dx
  +O\Big(\frac{1}{\lambda^{3+\varepsilon}}\Big)\bigg]\\
  =&2k\bigg[-\frac{\partial }{\partial \lambda}\int_{\mathbb{R}^N}\sum\limits_{i=1}^{N-m}\frac{1}{2}\frac{\partial^2Q(r_0,z_0'') }{\partial z_i^2 }\frac{z_i^2}{\lambda^2}\frac{1}{|y|}\frac{1}{2^{\star}}U_{0,1}^{2^{\star}}(x)dx
  +O\Big(\frac{1}{\lambda^{3+\varepsilon}}\Big)\bigg]\\
  =&2k\bigg[\frac{1}{\lambda^3}\frac{\Delta Q(r_0,z_0'')}{2^{\star}(N-m)}\int_{\mathbb{R}^N}\frac{z^2}{|y|}U_{0,1}^{2^{\star}}(x)dx
  +O\Big(\frac{1}{\lambda^{3+\varepsilon}}\Big)\bigg],
\end{align*}
where we used the facts that ${\frac{N-1}{2}+(N-1)\varepsilon+\beta_1}\geq 3+\varepsilon$ and $\beta_1+\frac{2}{N-2}(N-1-\varepsilon)\geq 3+\varepsilon$ if $\varepsilon>0$ small enough since $\iota$ is small.

Finally, we estimate $I_3$. by symmetry and Lemma \ref{AppA1}, we obtain
\begin{align*}
  I_3=&2k \int_{\Omega_1^+}\frac{1}{|y|}\Big((Z^*_{\bar{r},\bar{h},\bar{z}'',\lambda})^{2^{\star}-1}-\sum\limits_{j=1}^kU_{\xi_j^+,\lambda}^{2^{\star}-1}-\sum\limits_{j=1}^kU_{\xi_j^-,\lambda}^{2^{\star}-1}\Big) (x) \frac{\partial Z^*_{\bar{r},\bar{h},\bar{z}'',\lambda}}{\partial \lambda}(x)dx\\
  =&2k\bigg( \int_{\Omega_1^+}\frac{2^{\star}-1}{|y|}U_{\xi_1^+,\lambda}^{2^{\star}-2}(x)\Big(\sum\limits_{j=2}^kU_{\xi_j^+,\lambda}(x)+\sum\limits_{j=1}^kU_{\xi_j^-,\lambda}(x)\Big) \frac{\partial U_{\xi_1^+,\lambda}}{\partial \lambda}(x)dx+O\Big(\frac{1}{\lambda^{3+\varepsilon}}\Big)\bigg)\\
  =&2k\Big(-\sum\limits_{j=2}^k\frac{\tilde{B}_2}{\lambda^{N-1}|\xi_j^+-\xi_1^+|^{N-2}}-
  \sum\limits_{j=1}^k\frac{\tilde{B}_2}{\lambda^{N-1}|\xi_j^--\xi_1^+|^{N-2}}+O\Big(\frac{1}{\lambda^{3+\varepsilon}}\Big)\Big),
\end{align*}
for some constant $\tilde{B}_2>0$.

Using Lemma \ref{AppA6} and the condition $(C_3)$, we obtain the result with
\begin{equation*}
  B_1=\tilde{B}_1V({r}_0,{z}_0'')-\frac{\Delta Q(r_0,z_0'')}{2^{\star}(N-m)}\int_{\mathbb{R}^N}\frac{z^2}{|y|}U_{0,1}^{2^{\star}}(x)dx>0,\quad B_2=\tilde{B}_2.
\end{equation*}
\end{proof}

\end{appendices}

\end{document}